\newcommand\blfootnotea[1]{%
  \begingroup
  \renewcommand\thefootnote{}\footnote{#1}%
  \endgroup
}
\def\colorful{0}
\newcommand{\new}[1]{{\blue #1}}
\newcommand{\anote}[1]{\footnote{{\bf [Ankit: {#1} ]}}}
\newcommand{\pnote}[1]{\footnote{{\bf [Po-Ling: {#1} ]}}}
\newcommand{\vnote}[1]{\footnote{{\bf [Varun: {#1} ]}}}
\newcommand{\new}[1]{{#1}}
\newcommand{\anote}[1]{}
\newcommand{\pnote}[1]{}
\newcommand{\vnote}[1]{}
\def\E{\mathbb E}
\def\P{\mathbb P}
\def\R{\mathbb R}
\def\I{\mathbb I}
\def\N{\mathbb N}
\let\vec\mathbf
\newcommand{\bG}{\vec{G}}
\newcommand{\bH}{\vec{H}}
\newcommand{\bJ}{\vec{J}}
\newcommand{\bT}{\vec{T}}
\newcommand{\poly}{\mathrm{poly}}
\newcommand{\cN}{\mathcal{N}}
\newcommand{\cT}{\mathcal{T}}
\newcommand{\cE}{\mathcal{E}}
\newcommand{\cZ}{\mathcal{Z}}
\newcommand{\cP}{\mathcal{P}}
\newcommand{\cY}{\mathcal{Y}}
\newcommand{\cQ}{\mathcal{Q}}
\newcommand{\cH}{\mathcal{H}}
\newcommand{\cR}{\mathcal{R}}
\newcommand{\cS}{\mathcal{S}}
\newcommand{\cA}{\mathcal{A}}
\newcommand{\cC}{\mathcal{C}}
\newcommand{\cX}{\mathcal{X}}
\newcommand{\cB}{\mathcal{B}}
\newcommand{\dtv}{d_{\mathrm{TV}}}
\newcommand{\hel}{d_{\mathrm{h}}}
\newcommand{\haff}{\beta_{\mathrm{h}}}
\newcommand{\cTT}{\mathcal{T}^\textup{\texttt{thresh}}}
\newcommand{\SDA}{{\mathrm{SDA}}}
\newcommand{\nstar}{n^{\!\ast}}
\newcommand{\nide}{{n^{\!\ast}_{\textup{\texttt{identical}}}}}
\newcommand{\robust}{\textup{\texttt{robust}}}
\newcommand{\nnid}{{n^{\!\ast}_{\textup{\texttt{non-identical}}}}}
\newcommand{\nada}{{n^{\!\ast}_{\textup{\texttt{adaptive}}}}}
\newcommand{\tstar}{{\bT_{*}}}
\newcommand{\opt}{{\mathrm{OPT}}}
\newcommand{\VSTAT}{{\mbox{VSTAT}}}
\newcommand{\STAT}{{\mbox{STAT}}}
\newcommand{\MSAMPLE}{{\mbox{1-MSTAT}}}
\newcommand{\ic}{\textrm{IC}}
\newcommand{\mic}{\textup{min-IC}}
\newcommand{\Var}{\operatorname{Var}}
\newcommand{\real}{\ensuremath{\mathbb{R}}}
\crefname{equation}{Equation}{Equations}
\crefname{lemma}{Lemma}{Lemmata}
\crefname{claim}{Claim}{Claims}
\crefname{fact}{Fact}{Facts}
\crefname{theorem}{Theorem}{Theorems}
\crefname{proposition}{Proposition}{Propositions}
\crefname{corollary}{Corollary}{Corollaries}
\crefname{remark}{Remark}{Remarks}
\crefname{definition}{Definition}{Definitions}
\crefname{question}{Question}{Questions}
\newtheorem{theorem}{Theorem}[section]
\newtheorem{lemma}[theorem]{Lemma}
\newtheorem{claim}[theorem]{Claim}
\newtheorem{proposition}[theorem]{Proposition}
\newtheorem{corollary}[theorem]{Corollary}
\newtheorem{fact}[theorem]{Fact}
\theoremstyle{definition}
\newtheorem{definition}[theorem]{Definition}
\newtheorem{example}[theorem]{Example}
\newtheorem{problem}[theorem]{Problem}
\newtheorem{remark}[theorem]{Remark}
\theoremstyle{definition}
\definecolor{Red}{rgb}{1,0,0}
\definecolor{Blue}{rgb}{0,0,1}
\definecolor{DGreen}{rgb}{0,0.55,0}
\definecolor{Purple}{rgb}{.75,0,.25}
\renewcommand{\left}{\mleft}
\renewcommand{\right}{\mright}
\title{Communication-constrained hypothesis testing:\\
 Optimality, robustness, and reverse data processing inequalities\blfootnotea{This paper was presented in part at ISIT 2022.}}
\author{
Ankit Pensia\thanks{Supported by NSF grant CCF-1841190, NSF grant CCF-2011255, and the Department of Pure Mathematics and Mathematical Statistics at the University of Cambridge.}
\\
IBM Research\\
{\tt ankitp@ibm.com}\\
\and
Varun Jog\\
University of Cambridge\\
{\tt vj270@cam.ac.uk}
\and
Po-Ling Loh\\ University of Cambridge\\
{\tt pll28@cam.ac.uk}
}
\begin{document}

\maketitle
\begin{abstract}
We study hypothesis testing under communication constraints, where each sample is quantized before being revealed to a statistician. Without communication constraints, it is well known that the sample complexity of simple binary hypothesis testing is characterized by the Hellinger distance between the distributions.
We show that the sample complexity of simple binary hypothesis testing under communication constraints is at most a logarithmic factor larger than in the unconstrained setting and this bound is tight. We develop a polynomial-time algorithm that achieves the aforementioned sample complexity.
Our framework extends to robust hypothesis testing, where the distributions are corrupted in total variation distance. Our proofs rely on a new reverse data processing inequality and a reverse Markov inequality, which may be of independent interest. For simple $M$-ary hypothesis testing, the sample complexity in the absence of communication constraints has a logarithmic dependence on $M$. 
We show that communication constraints can cause an exponential blow-up, leading to $\Omega(M)$ sample complexity even for adaptive algorithms.

\end{abstract}

\section{Introduction} %
\label{sec:introduction}

Statistical inference has been extensively studied under constraints such as memory~\cite{Cov69, HelCov73, HelCov73a,GarRT18,BerOS20,DiaKPP22-streaming}, privacy~\cite{DworkRoth13,KaiOV16,DucJW18,CanKMSU19,GopKKNWZ20}, communication~\cite{Tsitsiklis93,BravGMNW16,HanOW21-t.inf.theory,AchCT20-II}, or a combination thereof~\cite{SteVW16,Feldman17,DaganShamir18,DiaGKR19,DucRog19,AchCT20-I}, typically designed to model physical or economic constraints. Our work focuses on communication constraints, where the statistician does not have access to the original samples---but only their quantized versions---generated through a communication-constrained channel. 
For example, instead of observing a sample $x \in \cX$, the statistician might observe a single bit $f(x) \in \{0,1\}$, for some function $f: \cX \to \{0,1\}$. The choice of the channel (here, the function $f$) crucially affects the quality of statistical inference and is the topic of study in our paper.

Under communication constraints, a recent line of work has established minimax optimal rates for a variety of problems, including distribution estimation and identity testing~\cite{Sha14,AchCT20-I,AchCT20-II,HanOW21-t.inf.theory,CheKO21,Canonne22}. However, under the same constraints, the problem of simple hypothesis testing has received scant attention.

Recall the simple hypothesis testing framework: Let $\cP$ be a finite set of distributions over the domain $\cX$. Given i.i.d.\ samples $X_1,\dots,X_n$ from an unknown distribution $p \in \cP$, 
the goal is to correctly identify $p$ with high probability (say, with probability at least $0.9$), with $n$ as small as possible. We denote this problem as $\cB(\cP)$ and use $\nstar(\cP)$ to denote its sample complexity; i.e., the minimum number of samples required to solve $\cB(\cP)$.

When $\cP = \{p, q\}$, the problem is referred to as the simple binary hypothesis testing problem and has a rich history in statistics~\cite{NeyPea33,Wald45,HubStr73,Cam86}.
Given its historical and practical significance, we have a deep understanding of this problem (cf.\ \Cref{sec:prelim} for details).
In particular, it is known that  $\nstar(\cP) = \Theta(1/ \hel^2(p,q))$, where $\hel(p,q)$
denotes the Hellinger distance between $p$ and $q$.

Hypothesis testing under communication constraints was studied in detail in the 1980s and 1990s under the name ``decentralized detection''~\cite{Tsitsiklis93}.
Briefly, the setup involves $n$ users and a central server. Each user $i$ observes an i.i.d.\ sample $X_i$ from an unknown distribution $p \in \cP$,
 generates a message $Y_i \in \{0,1,\dots,D-1\}$ using a channel $\bT_i$ (chosen by the statistician), and transmits $Y_i$ to the central server.  The central server observes $(Y_1,\dots,Y_n)$ and produces an estimate $\widehat{p} \in \cP$. 
The goal is to choose $(\bT_1,\dots,\bT_n)$ so that the central server can identify $p$ correctly with high probability, while keeping $n$ as small as possible.  
We call this problem ``simple hypothesis testing under communication constraints'' and denote it by $\cB(\cP,D)$. We denote the corresponding sample complexity by $\nstar(\cP,D)$.

\paragraph{Simple binary hypothesis testing} 
 \label{par:binary_hypothesis_testing}
We begin our discussion with the fundamental setting of simple binary hypothesis testing under communication constraints, i.e., $\cP = \{p,q\}$.
It is known that the central server should perform a likelihood ratio test~\cite{Tsitsiklis93}. 
Furthermore, an optimal choice of channels can be achieved using deterministic threshold tests; i.e., $Y_i = f_i(X_i)$ for some $f_i: \cX \to \{0,1,\dots,D-1\}$, such that $f_i$ is characterized by $D$ intervals that partition $\R_+$, and  $f_i(x) = j$ if and only if $p(x)/q(x)$ lies in the $j^{\text{th}}$ interval. 
The optimality of threshold tests crucially relies on the $f_i$'s being possibly non-identical across users~\cite{Tsitsiklis88}.

Nonetheless, several fundamental statistical and computational questions have remained unanswered.
We begin with the following statistical question:  
\begin{quotation}
\begin{center}
    \noindent \it  For $\cP = \{p,q\}$, what is the sample complexity of $\cB(\cP,D)$, and what is $\frac{\nstar(\cP,D)}{\nstar(\cP)}$?
\end{center}
\end{quotation}
Let $\nstar = \nstar(\cP)$ and $\nstar_{\text{bin}} = \nstar(\cP, 2)$ for notational convenience.
A folklore result using Scheffe's test (\Cref{def:scheffe}) implies that $\nstar_{\text{bin}} / \nstar \lesssim \nstar$ (cf. \cref{prop:Scheffe}).
One of our main results is an exponential improvement on this guarantee, showing that $\nstar_{\text{bin}} / \nstar \lesssim \log(\nstar)$, i.e.,  communication constraints only lead to at most a logarithmic increase in sample complexity.
More precisely, we show the following sample complexity bound:
\begin{align}
\label{eq:IntroBound}
\nstar(\cP,D) \lesssim \nstar(\cP) \max\left\{ 1, \frac{\log(\nstar(\cP))}{D} \right\}.
\end{align}
Furthermore, there exist cases where the bound~\eqref{eq:IntroBound} is tight (cf.\ \cref{thm:BinHypTestLowerBound}).  
The bound can further be improved when the support sizes of $p$ and $q$ are smaller than $\log(\nstar(\cP))$.

Turning to computational considerations, let $p$ and $q$ be distributions over $k$ elements. Although the optimality of threshold tests implies that each user can search over $k^{\Omega(D)}$ possible such channels, this is prohibitive for large $D$. Such an exponential-time barrier has been highlighted as a major computational bottleneck in decentralized detection~\cite{Tsitsiklis93}, leading to the following question:  
\begin{quotation}
\begin{center}
    \noindent \it  Is there a $\poly(k,D)$-time algorithm to compute channels $(\bT_1,\dots,\bT_n)$ that achieve the sample complexity bound~\eqref{eq:IntroBound}?  
\end{center}
\end{quotation}
We answer this question affirmatively by showing that it suffices to consider threshold tests parametrized by a single quantity (cf.\ equation~\eqref{eq:ThreshTestGeom}).
In fact, we show that it suffices to use an identical channel across the users (cf.\ \Cref{lem:IdentVsNonIdent}).

\paragraph{Robustness to model misspecification}
In many scenarios, it may be unreasonable to assume that the true distribution is either exactly $p$ or $q$, but rather that it is close to one of them in total variation distance.
 Let $\epsilon$ be the amount of corruption, so that the underlying distribution $p'$ belongs to $\cP_1 \cup \cP_2$, where $\cP_1 := \{ \tilde{p}: \dtv(p,\tilde{p}) \leq \epsilon \}$ and $\cP_2 := \{ \tilde{p}: \dtv(p,\tilde{q}) \leq \epsilon \}$, and $\dtv$ denotes the total variation distance.
Our goal is to design channels and a test such that, given samples from any distribution in $\cP_1$ (respectively $\cP_2$), we output $p$ (respectively $q$) with high probability.
Under the communication constraint of $D$ messages, we denote this problem by $\cB_\robust(\{p,q\},\epsilon,D)$.

As long as $\epsilon \lesssim \dtv(p,q) $, we can use Scheffe's test to solve $\cB_\robust(\{p,q\},\epsilon,2)$ with at most $O(1/ \dtv^2(p,q))$ samples. We may hope to improve upon this by using the optimal channel $\bT'$ for the uncontaminated hypothesis testing problem, $\cB(\{p,q\},D)$.
It is, however, unclear if $\bT'$ satisfies any robustness properties like the channel in Scheffe's test.  We show in \Cref{prop:robustness-for-free} that in the moderate contamination regime, when $\epsilon \lesssim \dtv^2(p,q)$ (up to logarithmic factors),  $\bT'$ solves $\cB_\robust(\{p,q\},\epsilon,D)$  with the same sample complexity as $\cB(\{p,q\},D)$. As a converse, we present cases where $\bT'$ is not robust to larger $\epsilon$. 

For the high contamination setting, we combine our technical results with the framework of ``least favorable distributions'' pioneered by Huber \cite{Huber65} and extended to the communication-constrained setting by Veeravalli, Basar, and Poor~\cite{VeeBP94}.
We show that the robust sample complexity under communication constraints, $\nstar_\robust(\{p,q\},\epsilon,D)$, increases by at most a logarithmic factor. Specifically, letting $\nstar_\robust:= \nstar_\robust(\{p,q\},\epsilon)$ be the robust sample complexity without any communication constraints, we obtain the following result in \Cref{thm:ub-Simple-D-robust}:
\begin{align}
\label{eq:IntroBoundRobust}
\nstar_\robust(\{p,q\},\epsilon, D) \lesssim \nstar_\robust \max\left\{ 1, \frac{\log\left(\nstar_\robust\right)}{D} \right\}.
\end{align}
This rate can be much tighter than the one obtained using Scheffe's test. Moreover, the rate above is achieved by a computationally efficient algorithm.

\paragraph{$M$-ary hypothesis testing} 
Finally, we consider the setting where $\cP$ contains $M > 2$ distributions and allow the choice of channels to be adaptive, i.e., the channel $\bT_i$ may depend on $Y_1,\dots,Y_{i-1}$. 
For simplicity, we consider the setting where $D$ and $\cP$ are fixed and focus on the dependence on $M$.
Using a standard tournament procedure, we show that there is an adaptive algorithm with sample complexity $O(M \log M)$.
On the other hand, in the absence of communication constraints, it is known that the sample complexity is $O(\log M)$.
We show that this exponential blow-up is necessary using the techniques from Braverman, Garg, Ma, Nguyen, and Woodruff~\cite{BravGMNW16}, i.e., the sample complexity under communication constraints is $\Omega(M)$. We also show $\Omega(\sqrt{M})$ lower bounds using two other techniques: (i) statistical query lower bounds~\cite{SteVW16,FelGRVX17}, and (ii) the impossibility of $\ell_1$-embedding~\cite{ChaSah02,LeeMN05}. Although these bounds are weaker than the $\Omega(M)$ lower bound, they have other favorable properties: the support size of the distributions in the hard instance is much smaller ($k$ is linear in $M$ as opposed to exponential in $M$), and the technical arguments that rely on the impossibility of $\ell_1$-embeddings are elementary.
Lastly, we consider the setting where all of the channels are restricted to be identical across users, which may be desirable in some applications. We provide specialized upper and lower bounds in this setting.

\paragraph{Our contributions}
We summarize our main contributions as follows:
\begin{enumerate}
  \item (Simple binary hypothesis testing.) We establish the minimax optimal sample complexity (cf.\ inequality~\eqref{eq:IntroBound}) of binary simple hypothesis testing under communication constraints (\Cref{thm:ub-Simple-D,thm:BinHypTestLowerBound}). Moreover, we provide an efficient algorithm, running in $\poly(k,D)$ time, to find a channel that achieves the minimax optimal sample complexity.
 
\item (Robust version of simple binary hypothesis testing.) \Cref{thm:ub-Simple-D-robust} focuses on the robust hypothesis testing problem and shows that the robust sample complexity increases by at most a logarithmic factor, which is achievable using a computationally-efficient algorithm.
  
  \item ($M$-ary hypothesis testing.) Generalizing to the setting of $M$-ary distributions, we show that for some cases, communication constraints can lead to an exponential increase in sample complexity, even for adaptive channels. 
  We also derive results, both upper and lower bounds, specialized to settings where the channels are restricted (cf.\ \Cref{sec:m-ary}).
 
  \item (Technical results.) Along the way, we prove the following two technical results which may be of independent interest: (i) a reverse data processing inequality for general $f$-divergences and communication-constrained channels (\cref{thm:quant-scheme}), and (ii) a reverse Markov inequality for bounded random variables (\cref{lem:revMarkovBinary}).
\end{enumerate}
The remainder of the paper is organized as follows: \Cref{sec:prelim} defines notation, states the problem, and recalls useful facts.
\Cref{sec:RevDataProc} contains a reverse data processing inequality for $f$-divergences.
\Cref{sec:BinHypTest} uses these inequalities to derive our statistical and computational guarantees for binary hypothesis testing.
Finally, \Cref{sec:m-ary} presents results for $M$-ary hypothesis testing.
More technical proofs are deferred to the supplementary appendices.

\section{Preliminaries} 
\label{sec:prelim}

\paragraph{Notation} 
\label{par:notation}
Throughout this paper, we will focus on discrete distributions. For $n \in \N$, we use $[n]$ to denote $\{1,\dots,n\}$ and $[0:n]$ to denote $\{0,1,\dots,n\}$.
 We use $\Delta_k$ to denote the set of distributions over $k$ elements.
 For a distribution $p \in \Delta_k$ and an index $i \in [k]$, we use both $p_i$ and $p(i)$ to denote the probability of element $i$ under $p$.
Given two distributions $p$ and $q$, let $\dtv(p,q)$ and $\hel(p,q) := \sqrt{\sum_{i}(\sqrt{p_i} - \sqrt{q_i})^2}$ denote the total variation and Hellinger distances between $p$ and $q$, respectively.
Let $\haff(p,q)$ denote the Hellinger affinity, i.e., $\haff(p,q) := 1 - 0.5 \hel^2(p,q)$.
Given $n$ distributions $p^{(1)},\dots,p^{(n)}$, we use $\prod_{i=1}^n p^{(i)}$ to denote their product distribution.
When each $p^{(i)} = p$, we use $p^{\otimes n}$ to denote the $n$-fold product distribution.
For a set $A \subseteq \cX$, we use $\I_A: \cX \to \{0,1\}$ to denote the indicator function of $A$.
We consider $[a,b)$ to be an empty set when $b \leq a$. For a channel $\bT : \cX \to \cY$ and a distribution $p$ over $\cX$, we use $\bT p$ to denote the distribution over $\cY$ when $X \sim p$ passes through the channel $\bT$.
As the channels between discrete distributions can be represented by column-stochastic matrices, we also use bold capital letters, such as $\bT$, to denote the corresponding matrices.
In particular, when $p$ is a distribution over $[k]$, represented as a vector in $\R^k$, and $\bT$ is a channel from $[k] \to [d]$, represented as a matrix $\bT \in \R^{d \times k}$, the output distribution $\bT p$ corresponds to the usual matrix-vector product.
We use $c,C,c',C'$, etc., to denote absolute positive constants, whose values might change from line to line, but with values which can be inferred by careful bookkeeping, while $c_1,C_1,c_2,C_2$, etc., are used to denote absolute positive constants that remain the same throughout the proof.

Finally, we use the following notations for simplicity: (i) $\lesssim$ and $\gtrsim$ to hide positive constants, (ii) the standard asymptotic notation $O(\cdot)$, $\Omega(\cdot)$, and  $\Theta (\cdot)$, and (iii) $\poly(\cdot)$ to denote a quantity that is polynomial in its arguments.
 
\subsection{Definitions and basic facts}
\label{par:relation_between_divergences}

\begin{definition}[$f$-divergence] For a convex function $f:\R_+ \to \R$ with $f(1)= 0$, we use $I_f(p,q)$ to denote the $f$-divergence between 
$p$ and $q$, defined as $I_f(p,q):= \sum_{i}q_i f\left( p_i/q_i \right)$.\footnote{We use the following conventions~\cite{Sason18}: $f(0) = \lim_{t \to 0^+}f(t)$, $0 f(0/0) = 0$, and for $a>0$, $0f(a/0) = a \lim_{u \to \infty} f(u)/u$.}

\end{definition}

We use the following facts:
\begin{fact}[Properties of divergences~\cite{Tsybakov09,ZivZakai73}]
\label{fact:div}
For any distributions $p,p^{(1)},\dots,p^{(n)}$ and $q,q^{(1)},\dots, q^{(n)}$ in $\Delta_{k}$:
\begin{enumerate}
  \item (Total variation and Hellinger distance.) $\dtv^2(p,q) \leq \hel^2(p,q) \leq 2\dtv(p,q)$.
  \item (Sub-additivity of total variation.) $\dtv\left(\prod_{i=1}^np^{(i)},\prod_{i=1}^nq^{(i)}\right) \leq \sum_{i=1}^n \dtv\left(p^{(i)},q^{(i)}\right)$.
  \item (Hellinger tensorization.)
  $\haff\left(\prod_{i=1}^n p^{(i)},\prod_{i=1}^n q ^{(i)}\right) = \prod_{i=1}^n \haff\left(p^{(i)},q^{(i)}\right)$.
  \item (Data processing.) For any channel $\bT$, $f$-divergence $I_f$, and a pair of distributions $(p,q)$, we have $I_f(\bT p, \bT q) \leq I_f(p,q)$.
\end{enumerate}
\end{fact}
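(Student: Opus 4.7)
The plan is to prove each of the four items independently; all are classical.

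For part 1, I would exploit the factorization $2\dtv(p,q) = \sum_i |\sqrt{p_i}-\sqrt{q_i}|(\sqrt{p_i}+\sqrt{q_i})$. The upper bound $\hel^2(p,q) \leq 2\dtv(p,q)$ then follows from $(\sqrt{p_i}-\sqrt{q_i})^2 \leq |\sqrt{p_i}-\sqrt{q_i}|(\sqrt{p_i}+\sqrt{q_i})$, while $\dtv^2(p,q) \leq \hel^2(p,q)$ follows from Cauchy--Schwarz applied to the same factorization, using $\sum_i (\sqrt{p_i}+\sqrt{q_i})^2 \leq 2\sum_i (p_i+q_i) = 4$. Part 3 is a one-line computation: the Hellinger affinity of a product distribution factors as $\sum_{x_1,\ldots,x_n}\prod_i \sqrt{p^{(i)}(x_i)q^{(i)}(x_i)} = \prod_i \sum_{x_i}\sqrt{p^{(i)}(x_i)q^{(i)}(x_i)} = \prod_i \haff(p^{(i)},q^{(i)})$, since a sum of products over independent indices factors.

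For part 2 (sub-additivity), I would induct on $n$. Let $P_n := \prod_{i=1}^n p^{(i)}$ and $Q_n := \prod_{i=1}^n q^{(i)}$. The triangle inequality for $\dtv$ gives $\dtv(P_n, Q_n) \leq \dtv(P_n, P_{n-1}\otimes q^{(n)}) + \dtv(P_{n-1}\otimes q^{(n)}, Q_n)$. The first term equals $\dtv(p^{(n)}, q^{(n)})$ and the second equals $\dtv(P_{n-1}, Q_{n-1})$, since tensoring with a common factor preserves total variation distance; induction closes the argument. Part 4 (data processing) is an application of Jensen's inequality: interpreting $r_y(x) := \bT(y \mid x) q(x)/(\bT q)(y)$ as the conditional distribution of $x$ given output $y$ under $q$, the likelihood ratio decomposes as $(\bT p)(y)/(\bT q)(y) = \sum_x r_y(x)\cdot p(x)/q(x)$. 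Convexity of $f$ then yields $(\bT q)(y)\, f\bigl((\bT p)(y)/(\bT q)(y)\bigr) \leq \sum_x \bT(y \mid x) q(x)\, f(p(x)/q(x))$, and summing over $y$ and using $\sum_y \bT(y \mid x) = 1$ gives $I_f(\bT p, \bT q) \leq I_f(p,q)$.

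The only subtlety, and really the only one, is handling zero denominators in part 4; this is taken care of by the extended conventions $f(0) := \lim_{t\to 0^+} f(t)$, $0 f(0/0) := 0$, and $0 f(a/0) := a \lim_{u\to\infty} f(u)/u$ already stipulated in the definition of $f$-divergence preceding \Cref{fact:div}.
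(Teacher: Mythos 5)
The paper states this as a standard fact with citations to \cite{Tsybakov09,ZivZakai73} and does not itself supply a proof, so there is no in-paper argument to compare against. Your proposal is a correct and self-contained verification of all four items, and each argument is the standard one: the factorization $|p_i-q_i|=|\sqrt{p_i}-\sqrt{q_i}|(\sqrt{p_i}+\sqrt{q_i})$ with Cauchy--Schwarz for part~1, a one-line Fubini factorization for part~3, the triangle inequality plus tensoring-invariance of $\dtv$ for part~2, and Jensen applied to the posterior $r_y(\cdot)$ for part~4 (including the correct treatment of zero-denominator cases via the stated conventions). The only step you elide is the identity $\haff(p,q)=\sum_x\sqrt{p(x)q(x)}$ used in part~3: since the paper defines $\haff:=1-\tfrac12\hel^2$, you should note that expanding $\hel^2(p,q)=2-2\sum_x\sqrt{p(x)q(x)}$ gives this Bhattacharyya-coefficient form before the product over coordinates can factor.
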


We now define the simple hypothesis testing problem:
\begin{problem} [Simple $M$-ary hypothesis testing]
Given $\cP$, a set of $M$ distributions over $\cX$, we say a function (test) $\phi: \cup_{n=1}^\infty \cX^n \to \cP$ solves the simple $M$-ary hypothesis testing problem with sample complexity $n$ if 
\begin{align*}
\sum_{ p \in \cP} \P_{x \sim p^{\otimes n}} \left\{   \phi(x) \neq p\right\} \leq 0.1.
\end{align*}
We define the sample complexity of hypothesis testing to be the smallest $n$ such that there exists a test $\phi$ which solves the hypothesis testing problem with sample complexity $n$.
We use $\cB(\cP)$ to denote the simple $M$-ary hypothesis testing problem and $\nstar(\cP)$ to denote the sample complexity of $\cB(\cP)$.
\end{problem}

\begin{fact}[Hypothesis testing and divergences~\cite{Yatracos85,DevLug01,CanKMSU19,Wainwright19}]
\label{fact:testing}
We have the following:
\begin{enumerate}
\item (Total variation and binary hypothesis testing.)
For any random variable $Z$ over $\cZ$ and test $\phi: \cZ \to \left\{ P, Q \right\}$, define the probability of error to be $\P_{P}(\phi(Z) = Q) + \P_{Q}(\phi(Z) = P)$. The minimum probability of error over all tests is  $1 - \dtv(P,Q)$ and is achieved by the following test:
  let $A^* \subseteq \cZ$ be any set that maximizes $P(A) - Q(A)$ over $A \subseteq \cZ$, and define $\phi(z) = P$ when $z \in A^*$ and $\phi(z) = Q$ otherwise.

\item (Hellinger distance and $\cB(\{p,q\})$.)  The sample complexity for the simple binary hypothesis test between $p$ and $q$ is $\Theta\left(\frac{1}{\hel^2(p,q)}\right)$, i.e., $\nstar(\{p,q\}) = \Theta\left(\frac{1}{\hel^2(p,q)}\right)$.

\item (Sample complexity of $M$-ary hypothesis testing.) Let $\cP$ be a set of $M$ distributions such that $\min_{p,q \in \cP: p \neq q }\hel(p,q) = \rho$. Then $\frac{1}{\rho^2} \lesssim \nstar(\cP) \lesssim \frac{\log M}{\rho^2}$. 
\end{enumerate}
\end{fact}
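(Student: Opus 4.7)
I would begin with the classical Neyman--Pearson / Scheff\'e argument. Given any test $\phi \colon \cZ \to \{P,Q\}$, set $A := \phi^{-1}(P) \subseteq \cZ$. Then the total error rewrites as $(1 - P(A)) + Q(A) = 1 - (P(A) - Q(A))$, so minimizing the error is the same as maximizing $P(A) - Q(A)$ over subsets $A \subseteq \cZ$. The supremum equals $\dtv(P,Q)$ by the definition of total variation distance and is attained at $A^* := \{z : P(z) \geq Q(z)\}$ (or any maximizer), which yields both the claimed optimal error $1 - \dtv(P,Q)$ and the described form of the optimal $\phi$.

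\textbf{Part 2.} I would apply Part~1 to the product distributions $P = p^{\otimes n}$ and $Q = q^{\otimes n}$, reducing the sample-complexity question to finding the smallest $n$ with $\dtv(p^{\otimes n}, q^{\otimes n}) \geq 0.9$. The key reduction is to move from TV to the Hellinger affinity via \Cref{fact:div}: affinity tensorizes exactly, $\haff(p^{\otimes n}, q^{\otimes n}) = \haff(p,q)^n = (1 - 0.5\,\hel^2(p,q))^n$, while the comparisons $\dtv^2 \leq \hel^2 \leq 2\dtv$ sandwich TV. For the \emph{upper bound}, $\dtv \geq 0.5\,\hel^2 = 1 - \haff$ gives $1 - \dtv(p^{\otimes n}, q^{\otimes n}) \leq \haff(p,q)^n \leq \exp(-0.5\, n\, \hel^2(p,q))$, which drops below $0.1$ once $n \gtrsim 1/\hel^2(p,q)$. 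For the matching \emph{lower bound}, $\dtv \leq \hel$ combined with $\dtv(p^{\otimes n}, q^{\otimes n}) \geq 0.9$ forces $\haff(p,q)^n \leq 0.595$; taking logarithms and using the elementary estimate $-\log(1-x) \leq x/(1-x)$ then yields $n \gtrsim 1/\hel^2(p,q)$, with the regime $\hel^2(p,q) \gtrsim 1$ being trivial since one sample already suffices.

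\textbf{Part 3.} For the upper bound I would run a tournament on a shared sample of size $n = C (\log M)/\rho^2$: for each unordered pair $\{p_i, p_j\} \subseteq \cP$, apply the Neyman--Pearson test between $p_i^{\otimes n}$ and $p_j^{\otimes n}$ and declare a winner, then output the unique element of $\cP$ that wins all of its matches. The upper-bound calculation from Part~2 bounds each pairwise error by $\haff(p_i, p_j)^n \leq \exp(-0.5 n \rho^2) = M^{-C/2}$; picking $C$ a sufficiently large constant and union bounding over the $\binom{M}{2}$ pairs makes the aggregate error at most $0.1$. For the lower bound, let $p_{i^*}, p_{j^*} \in \cP$ achieve the minimum pairwise Hellinger distance $\rho$; any tester for $\cB(\cP)$ immediately induces a tester for the binary subproblem $\cB(p_{i^*}, p_{j^*})$, so Part~2 gives $\nstar(\cP) \geq \nstar(p_{i^*}, p_{j^*}) \gtrsim 1/\rho^2$.

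\textbf{Main obstacle.} None of the three parts is genuinely difficult; this is a packaging of folklore. The only mild subtlety is that TV does not tensorize cleanly, so the chain must be routed through the Hellinger affinity, and the asymmetric comparisons $\dtv^2 \leq \hel^2 \leq 2\dtv$ in \Cref{fact:div} force the upper and lower halves of Part~2 to invoke different inequalities—losing only constants in each direction.
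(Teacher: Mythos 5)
The paper states this as a \emph{fact} cited from the literature (Yatracos, Devroye--Lugosi, Canonne et al., Wainwright) and does not supply its own proof, so there is nothing in the paper to compare against. Your argument is the standard folklore route and it is correct: Part~1 is the Neyman--Pearson reduction to maximizing $P(A)-Q(A)$; Part~2 correctly routes through the tensorizing Hellinger affinity, using $\hel^2\le 2\dtv$ for the upper bound (so $1-\dtv(p^{\otimes n},q^{\otimes n})\le\haff(p,q)^n\le e^{-n\hel^2/2}$) and $\dtv\le\hel$ for the lower bound (so $\dtv(p^{\otimes n},q^{\otimes n})\ge 0.9$ forces $\haff(p,q)^n\le 0.595$, and the elementary $-\log(1-x)\le x/(1-x)$ closes the gap, with the $\hel^2\gtrsim 1$ regime dismissed as trivial); and Part~3's tournament with shared samples, pairwise error $\le M^{-C/2}$, and a union bound over $\binom{M}{2}$ matches gives the $O(\log M/\rho^2)$ upper bound, while restricting the $M$-ary tester to the closest pair $\{p_{i^*},p_{j^*}\}$ and invoking Part~2 gives the $\Omega(1/\rho^2)$ lower bound. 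One minor point worth spelling out in Part~3 is that at most one distribution can win all its matches (since any two candidates play each other), so the tournament's ``unique winner'' is well defined whenever the true $p$ wins every match; this observation is implicit in your argument but deserves a sentence.
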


We will use the following additional fact that states the dependence on failure probability for simple binary hypothesis testing:
\begin{fact}[Failure probability and sample complexity]
\label{fact:fail-prob-samp-complexity}
Let $\psi:\cup_{n=1}^\infty \cX^n \to \{p,q\}$ be the optimal likelihood ratio test for two distributions $p$ and $q$ with uniform prior.
If $n \gtrsim \frac{\log(1/\delta)}{\hel^2(p,q)}$ for $\delta \leq 0.1$, the failure probability of the test $\psi$ with $n$ samples is less than $\delta$.
\end{fact}
The fact above follows by the optimality of the likelihood ratio test and a boosting argument using the median.

We now define Scheffe's test, which is commonly used for simple binary hypothesis testing.
\begin{definition}[Scheffe's test]
\label{def:scheffe}
For two distributions $p$ and $q$, consider the set $A = \{x: p(x) \geq q(x)\}$. 
Let $p'$ and $q'$ denote the distributions of $\I_A(X)$ when $X$ is distributed as $p$ and $q$, respectively. 
Given $(x_1,\dots,x_n) \in \cX^n$, 
Scheffe's test transforms each individual point $x_i$ to $\I_A(x_i)$  and then applies the optimal test between $p'$ and $q'$ to the transformed points.\footnote{Note that $p'$ and $q'$ are Bernoulli distributions with probabilities of observing $1$ equal to $p(A)$ and $q(A)$, respectively. The optimal test between $p'$ and $q'$ corresponds to a threshold on  $\sum_i\I_A(x_i)$.}
\end{definition}
It is easy to see that $\dtv(p',q') = \dtv(p,q)$, which implies that $\hel(p',q') \geq 0.5 \hel^2(p,q)$ (using \cref{fact:div}), leading to an $O\left(\frac{1}{\hel^4(p,q)}\right)$ sample complexity of Scheffe's test. This dependence is tight~\cite{Suresh21}, and shown for completeness in \Cref{prop:Scheffe} below.

\begin{proposition}[Sample complexity of Scheffe's test (folklore)]
\label{prop:Scheffe}
The sample complexity of Scheffe's test is at most $O\left( 1/\hel^4(p,q) \right)$. Furthermore, this is tight in the following sense: For any $\rho \in (0,1)$, there exist $p$ and $q$ such that $\nstar(\{p,q\}) = O(1/\rho)$, whereas the sample complexity of Scheffe's test is $\Omega\left( 1/\rho^2\right)$.
\end{proposition}

\begin{proof}
We begin by showing the upper bound on sample complexity. Let $p$ and $q$ be the two given distributions and let $\rho = \hel^2(p,q)$.
Let $\bT$ be the channel corresponding to Scheffe's test.
Since Scheffe's test preserves the total variation distance, we have $\dtv(p,q) = \dtv(\bT p, \bT q)$.
By \cref{fact:div}, we have
\begin{equation*}
\hel(\bT p, \bT q) \geq \dtv(\bT p, \bT q) = \dtv(p,q) \geq 0.5 \hel^2(p,q) \geq 0.5 \rho.
\end{equation*}
Thus, by \cref{fact:testing}, the sample complexity is at most $O(1/\hel^2(\bT p, \bT q)) = O(1/\rho^2) = O(1/d_h^4(p,q))$.

We now turn our attention to the tightness of the upper bound. Without loss of generality, we consider the setting when $\rho \leq 0.01$. Consider the following two distributions on $\Delta_3$: $p = (\rho, 1/2 - 2\rho, 1/2+ \rho )$ and $q = (0,1/2,1/2)$.
Let $\bT$ be the channel corresponding to Scheffe's test. 
Then we have $\bT p = (1/2 + 2 \rho, 1/2 - 2 \rho)$ and $\bT q = (1/2,1/2)$.
An elementary calculation shows that $\hel^2(p,q) = \Theta(\rho)$ and $\hel^2(\bT p, \bT q) = \Theta(\rho^2)$.
Applying \cref{fact:testing}, we obtain the desired conclusion.
\end{proof}

\subsection{Simple hypothesis testing under communication constraints}
\label{sec:prelim-commConstraint}

Let $\cX$ be the domain, $\cP$ a family of distributions over $\cX$, and $\cT$ a family of channels from $\cX$ to $\cY$. Let $\cT_D$ denote the set of all channels from $\cX$ to $[0:D-1$]. We first formally define the problem of simple hypothesis testing under communication constraints.

\begin{definition} [Simple hypothesis testing under communication constraints]
\label{def:simpleHypothesisTestComm}
Let  $\{U_i\}_{i=1}^n$ denote a set of $n$ users who choose channels $\{\bT_i\}_{i=1}^n \subseteq \cT$ according to a rule $\cR: [n] \to \cT$.
Each user $U_i$ then observes a random variable $X_i$ i.i.d.\ from an (unknown) $p \in \cP$, and  generates $Y_i = \bT_i(X_i) \in \cY$.
The central server $U_0$ observes $(Y_1,\dots,Y_n)$ and constructs an estimate $\widehat{p} = \phi(Y_1,\dots,Y_n)$. We refer to this problem as simple hypothesis testing under communication constraints of $\cT$ and denote it by $\cB(\cP,\cT)$. When $\cY = [0:D-1]$ and $\cT= \cT_D$ for $D \geq 2$, we call $\cB(\cP,\cT_D)$ the simple hypothesis testing problem under communication constraints of $D$-messages.
When $\cP = \{p,q\}$, we also use the notation $\cB(\{p,q\},\cT_D)$.
\end{definition}

\begin{remark}
In the definition above, one could allow the rule to be stochastic based on either private randomness or public randomness, and this choice may affect the resulting sample complexity. We refer the reader to Acharya, Canonne, Liu, Sun, and Tyagi~\cite{AchCLST22-interactive} for differences between various protocols.
In the (standard version of the) public-coin protocol, a public random variable is observed by all users and the central server, and may be used to coordinate the choice of channels.
When public randomness is available to the users but \emph{not} the central server (this protocol subsumes the  private-coin protocol), it follows from the work of Tsitsiklis~\cite[Proposition 2.1]{Tsitsiklis93} that the sample complexity  is not lower than that of the deterministic protocol considered in \Cref{def:simpleHypothesisTestComm}. 
 When the public randomness is observed by the central server as well, the sample complexity of simple \emph{binary} hypothesis testing is the same (up to constants) to that of private-coin protocols (and thus, to deterministic protocols). For more complicated problems such as $M$-ary hypothesis testing or composite hypothesis testing, public-coin protocols may lead to smaller sample complexities. The effect of public randomness is discussed in detail in \Cref{app: public}.
\end{remark}

\begin{definition}[Sample complexity of $\cB(\cP,\cT_D)$]
\label{def:sampleComplexityComm}
For a given test-rule pair $(\phi,\cR)$ with $\phi:\cup_{j=1}^ \infty \cY^j \to \cP$, we say that $(\phi,\cR)$ solves $\cB(\cP,\cT_D)$ with sample complexity $n$ 
 if 
\begin{align}
\label{eq:DefErrorProbCommCons}
  \sum_{p \in \cP}\P_{(x_1,\dots,x_n) \sim p^{\otimes n}}( \phi(y_1,\dots,y_n) \ne p) \leq 0.1.
\end{align}
We use $\nstar(\cP,\cT_D)$ to denote the sample complexity of this task, i.e., the smallest $n$ so that there exists a $(\phi,\cR)$-pair that solves $\cB(\cP,\cT_D)$.
We use $\nide(\cP,\cT_D)$ to denote the setting where each channel is identical, i.e., $\cR : [n] \to \cup_{\bT \in \cT_D} \{\bT\}^n$.
In order to emphasize the setting where the channels need not be identical, we sometimes use $\nnid(\cP,\cT_D)$ to denote $\nstar(\cP,\cT_D)$.
When $\cP = \{p,q\}$, we will use the notation $\nstar(\{p,q\},\cT_D)$, $\nide(\{p,q\},\cT_D) $, and $\nnid(\{p,q\},\cT_D)$.

\end{definition}
We shall discuss the setting of adaptive channels in \Cref{sec:m-ary}.

\paragraph{Special case: Binary hypothesis testing.} %
\label{par:special_case_binary_hypothesis_testing}
In the rest of this section, we will focus on the special case when $\cP = \{p,q\}$.
For a fixed rule $\cR$, an optimal $\phi$ corresponds to the likelihood ratio test.
 Thus, our focus will be on designing the rule $\cR$, while choosing the test $\phi$ implicitly\footnote{We will mention the test explicitly wherever required, e.g., in robust hypothesis testing.}, such that the test-rule pair $(\phi,\cR)$ has minimal sample complexity.

A subset of channels called \emph{threshold channels} plays a key role in our theory: Consider a set $\Gamma =\left\{ \gamma_1 , \dots, \gamma_{D-1} \right\}$ such that $0 < \gamma_1 \leq \dots \leq \gamma_{D-1} < \infty$.
Let $\gamma_0 := 0$ and $\gamma_{D} := \infty$.
Define the function $w_\Gamma:[k] \to [0:D-1]$ as follows\footnote{When $q(x) = 0$ for some $x$ and $p(x) \neq 0$, we take $p(x)/q(x) = \infty$. Without loss of generality, we can assume that for each $x \in [k]$, at least one of $p(x)$ or $q(x)$ is non-zero.}: if $q(x) = 0$, then $w_\Gamma(x) = D-1$; otherwise,
  \begin{equation}
\label{eq:ThreshFunction}
 w_\Gamma(x) = j \text{ if and only if }  p(x) / q(x) \in [\gamma_{j}, \gamma_{j+1}).
  \end{equation}
 We are now ready to define a threshold test:
 \begin{definition}[Threshold test]
\label{def:thresh_test}
We say that a channel $\bT \in \cT_D$ corresponds to a threshold test for two distributions $p$ and $q$ over $[k]$  if there exists $\Gamma =\left\{ \gamma_1 , \dots, \gamma_{D-1} \right\}$ such that $0 < \gamma_1 \leq \dots \leq \gamma_{D-1} < \infty$, and $w_\Gamma(X) \sim \bT \tilde{p}$ whenever $X \sim \tilde{p}$ for any $\tilde{p}$ (cf.\ equation~\eqref{eq:ThreshFunction}). Any such $\Gamma$ is called the set of thresholds of the test $\bT$.
We use $\cTT_D$ to denote the set of all channels $\bT \in \cT_D$ that correspond to threshold tests. 
\end{definition}

Note that a priori, searching for an optimal channel over $\cTT_D$ seems to require $k^{\Omega(D)}$ time, as it requires searching over all possible values of $\Gamma$. By restricting our attention to a special class of thresholds parametrized by a single quantity, we will obtain a $\poly(k,D)$-time algorithm. 
In particular, we will focus on channels with thresholds in the following set:
\begin{align}
\cC := \left\{ \Gamma =  (\gamma_1,\ldots, \gamma_{D-1}): \forall j \in [D-2], \, \gamma_{j+1}/\gamma_{j}= 2 \right\}.
\label{eq:ThreshTestGeom}
\end{align}

A classical result states that threshold tests (cf.\ \cref{def:thresh_test}) are optimal tests under communication constraints:
\begin{theorem}[{\cite[Proposition 2.4]{Tsitsiklis93}}]
\label{thm:Tsitlikis}
$\nnid(\{p,q\},\cTT_D) = \nnid(\{p,q\},\cT_D).$ 
\end{theorem}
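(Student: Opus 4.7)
Note first that $\cTT_D \subseteq \cT_D$ gives $\nnid(p,q,\cTT_D) \geq \nnid(p,q,\cT_D)$, so I focus on the reverse inequality. My plan is an iterative improvement argument: starting from any $(\bT_1,\dots,\bT_n) \in \cT_D^n$ and any test $\phi$ achieving the error criterion of \cref{def:sampleComplexityComm}, I will replace the channels one at a time by threshold channels in $\cTT_D$ while keeping $\phi$ fixed, never increasing the total error $\P_p(\phi \neq p) + \P_q(\phi \neq q)$. After $n$ such substitutions every channel lies in $\cTT_D$, which establishes $\nnid(p,q,\cTT_D) \leq \nnid(p,q,\cT_D)$.

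The key reduction is the following. Fix an index $i$, keep $\phi$ and $(\bT_j)_{j\neq i}$ fixed, and for each $y \in [0:D-1]$ define
\[
\alpha_y := \sum_{y_{-i}} \prod_{j \neq i}(\bT_j p)(y_j)\,\1\{\phi(y, y_{-i}) \neq p\}, \qquad
\beta_y := \sum_{y_{-i}} \prod_{j \neq i}(\bT_j q)(y_j)\,\1\{\phi(y, y_{-i}) \neq q\}.
\]
Expanding the product structure of $\prod_j(\bT_j p)(y_j)$ and $\prod_j (\bT_j q)(y_j)$ around coordinate $i$ shows that the total error equals $\sum_x \sum_y \bT_i(y|x)\,[\,p(x)\alpha_y + q(x)\beta_y\,]$. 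For each $x$ this is a linear functional of the conditional distribution $\bT_i(\cdot|x) \in \Delta_D$, so a minimizing choice puts all mass on $y^\star(x) := \arg\min_y\{p(x)\alpha_y + q(x)\beta_y\}$. When $q(x) > 0$, dividing by $q(x)$ rewrites this as $\arg\min_y\{w(x)\alpha_y + \beta_y\}$ with $w(x) := p(x)/q(x)$, and when $q(x)=0$ it reduces to $\arg\min_y \alpha_y$, consistent with the $w(x) = \infty$ convention. In either case $y^\star$ depends on $x$ only through $w(x)$.

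To conclude that $y^\star$ has the shape of \eqref{eq:ThreshFunction}, I use the geometric observation that the $D$ maps $w \mapsto w\alpha_y + \beta_y$ are affine, so their lower envelope partitions $[0,\infty]$ into at most $D$ contiguous intervals on each of which a single label is the minimizer. Relabeling the output alphabet so that $(\alpha_y)$ is non-increasing in $y$ arranges these intervals in order of increasing $w$ (with $q(x)=0$ inputs landing in the rightmost one); the finitely many breakpoints become thresholds $0 < \gamma_1 \leq \dots \leq \gamma_{D-1} < \infty$, with $\gamma_j = \gamma_{j+1}$ whenever label $j$ is not a minimizer on any interval. This produces $\bT_i^{\mathrm{th}} \in \cTT_D$ whose substitution for $\bT_i$ weakly lowers the error; iterating over $i=1,\dots,n$ replaces every channel by a threshold test while preserving the error budget.

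The main obstacle I anticipate is not a deep mathematical difficulty but the bookkeeping: handling the degenerate $q(x)=0$ inputs, breaking ties in $\arg\min_y$ consistently, and justifying that the output-alphabet relabeling is a lossless bijection that produces the exact interval structure required by \cref{def:thresh_test}. These steps are routine once set up carefully, but must be verified to match the precise threshold-channel convention used throughout the paper.
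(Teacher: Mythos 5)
The paper does not actually prove this statement; it is taken as a black box, cited from Tsitsiklis's survey \cite{Tsitsiklis93} (Proposition 2.4). Your sketch correctly reconstructs the standard argument that proof uses: the error, as a function of the conditional $\bT_i(\cdot\mid x)$ with $\phi$ and the other channels frozen, is affine, so the minimizer is a deterministic map; after dividing by $q(x)$, the objective $w(x)\alpha_y+\beta_y$ depends on $x$ only through the likelihood ratio $w(x)$; and the lower envelope of $D$ affine functions of $w$ partitions $[0,\infty]$ into at most $D$ contiguous intervals, each assigned to one label — exactly a threshold test. Iterating over $i$ and using that the class $\cTT_D \subseteq \cT_D$ gives the trivial reverse inclusion of sample complexities.

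One inconsistency worth tightening: your opening paragraph states the plan is to ``keep $\phi$ fixed,'' but the relabeling step you invoke later (sorting labels so $\alpha_y$ is non-increasing) does modify the test. If $\sigma$ is the permutation on $[0:D-1]$ used to relabel $\bT_i$'s outputs, the test must be replaced by $\phi'(y_1,\dots,y_n)=\phi(y_1,\dots,\sigma^{-1}(y_i),\dots,y_n)$, after which $(\sigma\circ\bT_i,\phi')$ induces the same joint law on the transcript as $(\bT_i,\phi)$ and hence the same error. That is a harmless and lossless change, and you flag it yourself as a point to be checked, so this is not a gap in the mathematics — but the plan should be restated as keeping $\phi$ fixed up to a coordinatewise relabeling of the alphabet at each substitution step, since the literal ``$\phi$ fixed'' version of the argument would produce a channel that is a deterministic function of the likelihood ratio with interval level sets, but not in the particular labeled form required by \Cref{def:thresh_test}. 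With that adjustment your argument is a correct, self-contained proof of the cited result.
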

Our lower bounds on the sample complexity of hypothesis testing under communication constraints crucially rely on the optimality of threshold tests.

\section{Reverse data processing inequality for quantized channels} %
\label{sec:reverse_data_processing_inequality}
\label{sec:RevDataProc}

In this section, we state and prove a reverse data processing inequality for a class of $f$-divergences for communication-constrained channels.
We begin by defining a suitable family of $f$-divergences:
\begin{definition}[Well-behaved $f$-divergences]
\label{def:well-behaved}
We say $I_f(\cdot,\cdot)$ is a well-behaved $f$-divergence if it satisfies the following:
\begin{enumerate}[label=I.\arabic*]
  \item \label{item:pos} $f$ is a convex nonnegative function with $f(1)=0$.
  \item \label{item:symmetry} $xf(y/x) = y f(x/y)$.\footnote{This implies $I_f(p,q) = I_f(q,p)$.}
  \item \label{item:quad} There exist $ \alpha > 0, \kappa > 0, C_1 > 0$, and $C_2 > 0 $ such that for all $x \in [0, \kappa]$, we have
  
  \begin{equation*}
  C_1 x^\alpha \leq f(1 + x) \leq C_2x^\alpha.
  \end{equation*}
\end{enumerate}
\end{definition}

Some examples of well-behaved $f$-divergences include the total variation distance, squared Hellinger distance, symmetrized $\chi^2$-divergence,  symmetrized KL-divergence, and triangular discrimination (see \Cref{claim:wellBehExamples} for more details).
If $f$ is differentiable at $1$, $f'(1) = 0$, and the corresponding $f$-divergence is symmetric, then $f$ satisfies \ref{item:symmetry}~\cite{Gilardoni06,Sason15}.
Given an $f$-divergence that does not satisfy \ref{item:symmetry}, we can construct a new $f$-divergence with $\tilde{f}(x):= f(x) + x f(1/x)$, which is also a convex  function---this can be checked by noting that $\tilde{f}''(x) = f''(x) + \frac{1}{x^3} f''(x)$, which is non-negative, as $f$ is convex---satisfying $\tilde{f}(1) = 0$ and \ref{item:symmetry}. Furthermore, the convexity and non-negativity of $f$ means $\alpha$ must be at least $1$.

\subsection{Main result}

The main result of this section is as follows:

\begin{restatable}[Reverse data processing inequality]{theorem}{fDivRevDataProc}
\label{thm:quant-scheme}
Let $I_f$ be a well-behaved $f$-divergence with $(\alpha, \kappa, C_1, C_2)$ as defined in Definition~\ref{def:well-behaved}. Let $p$ and $q$ be two fixed distributions over $[k]$ such that for all $i \in [k]$, we have $q_i \geq \nu p_i$ and $p_i \geq \nu q_i$,  for some $\nu \in [0,1]$.
Then for any $D \geq 2$, there exists a channel $\bT^* \in \cTT_D$ (and thus in $\cT_D$) such that 
\begin{align}
\label{eq:GenFDivergence}
1  \leq \frac{I_f(p,q)}{I_f(\bT^* p,\bT^* q)}  \leq  4 \frac{f(\nu)}{f(1/(1+\kappa))} + \frac{52C_2}{C_1} \max\Big\{1, \frac{R}{D} \Big\},
\end{align}
where $R = \min \{k,k'\}$ and $k' = 1 + \log\left( \frac{4C_2 \kappa^\alpha}{I_f(p,q)}\right)$.
Furthermore, given $f$, $p$, and $q$, there is a $\poly(k,D)$-time algorithm that finds a $\bT^*$ achieving the rate in inequality~\eqref{eq:GenFDivergence}.
\end{restatable}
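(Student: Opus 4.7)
I would take $\bT^*$ to be a threshold test from the one-parameter family in~\eqref{eq:ThreshTestGeom}, i.e., with thresholds $(\gamma_1, 2\gamma_1,\ldots,2^{D-2}\gamma_1)$ indexed by a single scalar $\gamma_1>0$. The lower bound $I_f(p,q)/I_f(\bT^* p,\bT^* q)\geq 1$ is immediate from the data processing inequality (\Cref{fact:div}). Algorithmically, as $\gamma_1$ sweeps over $(0,\infty)$ the partition of $[k]$ induced by these $D-1$ thresholds changes only at $O(kD)$ breakpoints (each occurring when a threshold crosses some ratio $p_i/q_i$), and $I_f(\bT p,\bT q)$ is constant between them, so a linear scan over breakpoints finds the optimal $\gamma_1$ in $\poly(k,D)$ time.

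\textbf{Decomposition by likelihood ratio.} For the nontrivial upper bound, I would partition $[k]$ via the likelihood ratio $r(x):=p(x)/q(x)$ into an \emph{extreme} set $\cE := \{x:r(x)\notin [1/(1+\kappa),1+\kappa]\}$ and a \emph{central} set $\cM := [k]\setminus\cE$, and choose $\gamma_1$ so that $\bT^*$ has dedicated bins for the two halves $\cE_\pm$ of $\cE$. On $\cE_+$, property~\ref{item:symmetry} rewrites $q(x)f(r(x)) = p(x)f(1/r(x))$, and since $1/r(x)\in[\nu,1/(1+\kappa))$, monotonicity of $f$ on $[0,1]$ gives $q(x)f(r(x))\leq p(x)f(\nu)$. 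On the quantized side, $Q_+/P_+\leq 1/(1+\kappa)$ together with the same identity yields $Q_+ f(P_+/Q_+) = P_+ f(Q_+/P_+)\geq P_+ f(1/(1+\kappa))$. Dividing and doubling to account for $\cE_-$ produces the first summand $\tfrac{4f(\nu)}{f(1/(1+\kappa))}$ in~\eqref{eq:GenFDivergence}.

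\textbf{Central part and bin budget.} The remaining $D-2$ thresholds partition $[1/(1+\kappa),1+\kappa]$ into geometric bins of ratio $2$, on which property~\ref{item:quad} pinches $f$ between $C_1 x^\alpha$ and $C_2 x^\alpha$. Within any single bin $[a,2a)$ the ratio $r(x)$ stays within a factor of $2$ of the quantized ratio $P_j/Q_j$, so replacing $\sum_x q(x)f(r(x))$ by $Q_j f(P_j/Q_j)$ costs at most a factor $O(C_2/C_1)$, which matches the second summand whenever $D\geq R$. For the bin count $R$: trivially $R\leq k$; moreover any geometric bin whose $q$-contribution to $I_f^{\cM}(p,q)$ is below $I_f(p,q)/k'$ can be absorbed into a neighbor at negligible cost, and property~\ref{item:quad} shows that the number of "heavy" geometric bins is at most $k'=1+\log(4C_2\kappa^\alpha/I_f(p,q))$. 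When $D<R$, a shift-averaging argument over $\gamma_1$ (equivalently, over the offset of the geometric grid modulo $\log 2$) produces a starting value for which the additional merging loss across the central region is at most a factor $R/D$, and the explicit constant $52C_2/C_1$ emerges from the $\alpha$-exponent bookkeeping in~\ref{item:quad}.

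\textbf{Main obstacle.} The delicate step is the shift-averaging: one must exhibit a \emph{single} choice of $\gamma_1$ for which the two extreme bins are simultaneously aligned near $1/(1+\kappa)$ and $1+\kappa$ \emph{and} the central merging loss is only linear in $R/D$ (rather than exponential in $R-D$). Tracking how the constants of property~\ref{item:quad} propagate across merged bins, and how perturbing the single parameter $\gamma_1$ trades off the two extreme bins against the central alignment, is the technical crux; once this is in place, the two summands in~\eqref{eq:GenFDivergence} combine additively.
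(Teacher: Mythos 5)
Your overall outline---reserve two extreme bins for ratios outside $[1/(1+\kappa),1+\kappa]$, handle them via the symmetry property~\ref{item:symmetry} and monotonicity of $f$ on $[0,1]$, and then bin the central region and invoke~\ref{item:quad}---is in the right family, and your treatment of the extreme bins is essentially correct. But the central-region argument has a genuine gap, on two counts.

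First, the geometric grid $\gamma_j = 2^{j-1}\gamma_1$ in the \emph{likelihood ratio} $r(x)=p(x)/q(x)$ does not give the resolution you need near $r(x)=1$. A ratio-$2$ grid puts only $O(\log(1+\kappa))=O(1)$ bins inside $[1/(1+\kappa),1+\kappa]$, so for $D$ larger than that constant almost all thresholds are wasted. Correspondingly, your claim that within a bin $[a,2a)$ the per-bin quantization loss is $O(C_2/C_1)$ is false: on a bin containing ratios near $1$, the quantity $r(x)-1$ ranges over an interval like $[a-1,2a-1)$ whose endpoints differ by an unbounded multiplicative factor, so $f(r(x))\asymp (r(x)-1)^\alpha$ varies wildly within the bin. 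Property~\ref{item:quad} pins down $f(1+x)/x^\alpha$, not $f$ itself, so it does not rescue a coarse bin. What the paper actually does is place thresholds $\gamma_j = 1+\nu_j$ with $(\nu_j)^\alpha = \nu'_j$ forming the geometric sequence $\nu'_j=\min(\kappa^\alpha, x2^j)$---i.e., geometric in $(r(x)-1)^\alpha$, not in $r(x)$. This is the parametrization that yields fine resolution near $r(x)=1$, and it is also what the $\poly(k,D)$ algorithm sweeps over (the set $\cC$ in~\eqref{eq:ThreshTestGeom} is only a loose description).

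Second, the quantitative heart of the argument is missing. The statement ``a shift-averaging argument over $\gamma_1$ produces a starting value for which the additional merging loss is at most a factor $R/D$'' and the claim that ``the number of heavy geometric bins is at most $k'$'' are precisely the conclusions that need to be proved; they are not proved by anything in your sketch. The paper's mechanism is the reverse Markov inequality (\cref{lem:revMarkovBinary}/\cref{lem:revMarkovD}): after passing to the random variable $X$ with $\P(X=\delta_i)=q_i$, $\delta_i = r(i)-1$, and setting $Y = X^\alpha$, it shows that there exist $D$ geometric cut-points $\nu'_j = x2^j$ so that $\sum_j \nu'_j\P(Y\in[\nu'_j,\nu'_{j+1}))\gtrsim \E[Y]\min(1,D/R)$ with $R=\min(k,1+\log(\beta/\E[Y]))$. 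This is exactly where both the $\min(k,k')$ in $R$ and the $\log$-factor come from; the averaging over $x$ you gesture at is a step \emph{inside} the proof of this lemma, not a substitute for it. Without proving a reverse-Markov-type statement, your argument does not close.

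One structural remark: the paper's proof is a dichotomy, not an additive decomposition. It observes that one of the two regions carries at least half of $I_f(p,q)$ and then uses a different channel in each case (a single threshold at $1+\kappa$ in the extreme case; the reverse-Markov thresholds in the central case), so the two terms on the right side of~\eqref{eq:GenFDivergence} are never simultaneously active. Your plan of reserving two bins for $\cE_\pm$ inside the same channel as the central binning could in principle be made to work, but it imposes an unnecessary constraint (you must align the two extreme thresholds with $1\pm\kappa$ while also optimizing the central offset), which is exactly the ``simultaneous alignment'' headache you flag as the main obstacle---and which the dichotomy sidesteps entirely.
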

\begin{remark}
In the usual data processing inequality, the $f$-divergence $I_f(\bT p, \bT q)$ is upper-bounded by $I_f(p,q)$. Since the direction of the inequality is reversed in the second inequality in \Cref{thm:quant-scheme}, we interpret it as a \emph{reverse} data processing inequality. Another natural way to interpret this result is from the lens of quantization: \Cref{thm:quant-scheme} asserts that for any $p$, $q$, and any well-behaved $f$-divergence, there exist good quantization schemes to preserve the $f$-divergence.
\end{remark}
We provide a brief proof sketch for the special case of the Hellinger distance and $D = 2$ in \Cref{sec:proof_sketch_of_quant_scheme}, and defer the full proof to \Cref{app:RevDatProcProof}.
As our main focus will be on the Hellinger distance, we state the following corollary, which will be used later:
\begin{corollary}[Preservation of Hellinger distance]
\label{cor:hellQuant}
For any $p, q \in \Delta_k$ and $D \geq 2$, 
there exists a $\bT^* \in \cTT_D$ such that the following holds:
\begin{align}
\label{eq:HellGuarantee}
1  \leq \frac{\hel^2(p,q)}{\hel^2(\bT^* p,\bT^*q)}  \leq 1800\max\left\{1,   \frac{ \min\{k, k'\}}{D} \right\},
\end{align}
where $k'= \log(4/\hel^2(p,q))$.
Given $p$ and $q$, there is a $\poly(k,D)$-time algorithm that finds $\bT^*$ achieving the rate in inequality~\eqref{eq:HellGuarantee}.
\end{corollary}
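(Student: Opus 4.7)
My plan is to deduce \Cref{cor:hellQuant} as a direct specialization of \Cref{thm:quant-scheme} to the squared Hellinger distance, with the trivial balance parameter $\nu = 0$. The work splits into three tasks: identify the generator $f$ and verify Conditions \ref{item:pos}--\ref{item:quad} with explicit constants; invoke \Cref{thm:quant-scheme} at $\nu = 0$ (which makes the support hypothesis vacuous for every pair $p,q$); and check that the resulting numerical constants fit inside the claimed factor of $1800$. There is no serious obstacle here; the whole argument is essentially constant bookkeeping once the right $f$-divergence is identified.

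For the first task, write $\hel^2(p,q) = \sum_i q_i f(p_i/q_i)$ with $f(x) = (\sqrt{x}-1)^2$. Conditions \ref{item:pos} and \ref{item:symmetry} are immediate: $f$ is nonnegative and convex with $f(1) = 0$, and $x f(y/x) = (\sqrt{x}-\sqrt{y})^2 = y f(x/y)$. For \ref{item:quad}, the identity $\sqrt{1+x} - 1 = x/(\sqrt{1+x}+1)$ gives $f(1+x) = x^2/(\sqrt{1+x}+1)^2$; on $x \in [0,1]$ the denominator lies in $[4, (\sqrt{2}+1)^2]$, so one may take $\alpha = 2$, $\kappa = 1$, $C_1 = (\sqrt{2}+1)^{-2}$, and $C_2 = 1/4$.

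For the remaining tasks, the hypothesis $q_i \geq \nu p_i$ and $p_i \geq \nu q_i$ in \Cref{thm:quant-scheme} is vacuous at $\nu = 0$, so the theorem produces a threshold channel $\bT^* \in \cTT_D$ satisfying
\[
\frac{\hel^2(p,q)}{\hel^2(\bT^* p, \bT^* q)} \leq 4 \cdot \frac{f(0)}{f(1/2)} + \frac{52\, C_2}{C_1}\, \max(1, R/D),
\]
where $R = \min(k,\, 1 + \log(1/\hel^2(p,q)))$. The matching lower bound of $1$ and the $\poly(k,D)$-time procedure to find $\bT^*$ are inherited verbatim from \Cref{thm:quant-scheme}. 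Numerically, $f(0) = 1$ and $f(1/2) = 3/2 - \sqrt{2}$, so the first term equals $4/(3/2 - \sqrt{2}) < 60$, and the multiplier $52\, C_2/C_1 = 13(\sqrt{2}+1)^2 = 39 + 26 \sqrt{2} < 80$. Since $1 + \log(1/\hel^2(p,q)) \leq \log(4/\hel^2(p,q))$, the theorem's $R$ is at most the corollary's $\min(k, k')$, and the full right-hand side is dominated by $(60+80) \max(1, \min(k,k')/D) \leq 1800 \max(1, \min(k,k')/D)$, as required. The only subtle point is the case $q_i = 0$, which is absorbed by the convention $p_i/q_i = \infty$ built into \Cref{def:thresh_test}.
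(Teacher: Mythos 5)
Your proposal is correct and follows essentially the same route as the paper: specialize \Cref{thm:quant-scheme} to $f(x)=(\sqrt{x}-1)^2$, $\nu=0$, $\kappa=1$, $\alpha=2$, and then verify the resulting constants sit below $1800$. The only cosmetic difference is that the paper quotes the constants $(C_1, C_2)=(2^{-3.5},1)$ from \Cref{claim:wellBehExamples}, whereas you derive the slightly sharper values $(C_1,C_2)=((\sqrt{2}+1)^{-2},\,1/4)$ directly from $f(1+x)=x^2/(\sqrt{1+x}+1)^2$; both are valid witnesses for Condition~\ref{item:quad}, and either choice comfortably clears the $1800$ ceiling. Your handling of the $R$-versus-$k'$ comparison and of the $q_i=0$ convention is also consistent with what the paper does implicitly.
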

\begin{proof}
 The desired bound follows by noting that $f(x) = (\sqrt{x} -1 )^2 $ for the Hellinger distance and taking $\nu= 0$. As shown in \Cref{app:technical_details} (\Cref{claim:wellBehExamples}), we can take $\kappa = 1$, $C_1 = 2^{-3.5}$, $C_2 = 1$, and $\alpha= 2$. 
Note that $f(0) = 1$ and $f(1/(1+\kappa)) = (\sqrt{2} - 1)^2/2 \geq 0.04$. 
This suffices to give a guarantee of $\frac{\hel^2(p,q)}{\hel^2(\bT^* p,\bT^*q)}  \leq 100 +  \frac{900}{D} \left( \min\{k , k'\}\right)$ from \Cref{thm:quant-scheme}.
\end{proof}

We note that \Cref{cor:hellQuant} also follows from the work of Bhatt, Nazer, Ordentlich, and Polyanskiy~\cite[Theorem 1]{BhaNOP21} by noting that when $X$ is a binary random variable uniform over $\{0,1\}$ and $Y$ is distributed as $p$ conditioned on $X=0$ and $Y$ is distributed as $q$ conditioned on $X=1$, then $\mathrm{I}(X;Y)$, the mutual information between $X$ and $Y$, satisfies  $\mathrm{I}(X;Y) \asymp \hel^2(p,q)$.

\begin{remark}[Dimensionality reduction using channels]\label{rem:HellEffSupportSize}
\cref{cor:hellQuant} can be interpreted as saying that the effective support size of $p$ and $q$ for the Hellinger distance is at most $k':=\log(4/\hel^2(p,q))$, because the distributions could be mapped to a $k'$-sized alphabet using a channel in a manner that preserves the pairwise Hellinger distance up to constant terms.
We also remark that our notion of dimensionality reduction requires the transformation to be performed using a channel, which is fundamentally different from the setting in Abdullah, Kumar, McGregor, Vassilvitskii, and Venkatasubramanian~\cite{AbdKMVV16}.
\end{remark}

The following result states that the bound in \Cref{cor:hellQuant} is tight:
\begin{restatable}[Reverse data processing is tight]{lemma}{RevMarkIneqHell}
\label{lem:HellTight} 
 There exist positive constants $c_1,c_2,c_3,c_4,c_5$, and $c_6$ such that for every $\rho \in (0, c_1)$ and $D \geq 2$, there exist $k \in [c_2 \log(1/\rho), c_3 \log(1/ \rho)]$ and two distributions $p$ and $q$ on $[k]$ such that $\hel^2(p,q) \in [c_4 \rho, c_5 \rho]$ and
\begin{align}
\label{eq:HellTight}
\inf_{\bT \in \cTT_D }\frac{\hel^2(p,q)}{\hel^2(\bT p,\bT q)}  \geq    c_6 \cdot \frac{R'}{D},
\end{align}
where $R' = \max\{k, k'\} $ and $k' = \log\left( 1/ \rho \right)$. Thus, $R' = \Theta(k) = \Theta(\log(1/\rho))$.
\end{restatable}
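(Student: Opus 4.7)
The plan is to exhibit distributions $p, q$ on $\{0, 1, \ldots, K\}$ with $K = \lfloor c \log_2(1/\rho)\rfloor$ for a small $c \in (0, 1)$, designed so that the Hellinger distance is generated uniformly from $K$ distinct ``scales'' of likelihood ratio that no coarse partition can preserve. Specifically, set $\epsilon = c_0 \sqrt{\rho/K}$ and, for $i \in [K]$, let $q_i = 2^{-i-1}$ and $p_i = 2^{-i-1}(1 + \epsilon \, 2^{i/2})$; let $q_0 = 1 - \sum_{i\geq 1} q_i$ and $p_0 = 1 - \sum_{i\geq 1} p_i$ be the residual masses chosen so that $p$ and $q$ are valid probability distributions. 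The likelihood ratios $r_i = 1 + \epsilon \, 2^{i/2}$ are monotonically increasing over $i \in [K]$, while $r_0 \approx 1 - O(\epsilon) < 1$.

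A Taylor expansion yields $(\sqrt{p_i} - \sqrt{q_i})^2 \approx (p_i - q_i)^2/(4q_i) = \epsilon^2/8$ for each $i \in [K]$---the geometric decay $q_i \propto 2^{-i}$ cancels exactly against the squared growth $(p_i/q_i - 1)^2 \propto \epsilon^2 \cdot 2^i$---so $\hel^2(p,q) = \Theta(K\epsilon^2) = \Theta(\rho)$ by the choice of $\epsilon$, with element $0$ contributing only $O(\epsilon^2) = O(\rho/K)$. Validity of the Taylor step is ensured by $\epsilon \cdot 2^{K/2} = O(1/\sqrt K)$ when $c < 1$. For any $\bT \in \cTT_D$, the induced partition splits $\{0, 1, \ldots, K\}$ into $D$ contiguous bins ordered by likelihood ratio. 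For a bin $B_j \subseteq [K]$ comprising elements $\{a+1, \ldots, b\}$, summing the relevant geometric series gives $q(B_j) \asymp 2^{-a}$ and $|p(B_j) - q(B_j)| \asymp \epsilon \cdot 2^{-a/2}$, from which
\begin{equation*}
(\sqrt{p(B_j)} - \sqrt{q(B_j)})^2 \approx \frac{(p(B_j) - q(B_j))^2}{4 q(B_j)} \leq C \epsilon^2
\end{equation*}
for an absolute constant $C$, uniformly in the bin's position $a$ and width $b-a$ (the $2^{-a}$ factors in the numerator and denominator cancel). The bin containing element $0$ likewise contributes $O(\epsilon^2)$.

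Summing over the $D$ bins, $\hel^2(\bT p, \bT q) = O(D \epsilon^2) = O(D\rho/K)$, which gives $\hel^2(p,q)/\hel^2(\bT p, \bT q) \geq \Omega(K/D) = \Omega(R/D)$ since $R = \Theta(K) = \Theta(\log(1/\rho))$. The main obstacle is verifying that the per-bin Hellinger contribution is uniformly $O(\epsilon^2)$ for \emph{every} threshold placement: narrow bins at the high-ratio tail, wide bins collapsing many scales, and bins containing element $0$ must all be handled simultaneously, which requires careful bookkeeping of the geometric sums and linearization remainders arising from the construction.
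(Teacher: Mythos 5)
Your construction is correct, and it is essentially the same idea as the paper's: place mass on $\Theta(\log(1/\rho))$ dyadic "scales" with $q_i\,\delta_i^2$ constant so that each scale contributes equally to $\hel^2(p,q)$, while any contiguous bin captures only $O(\epsilon^2)$ because the dominant $2^{-a}$ factors in $q(B_j)$ and $(p(B_j)-q(B_j))^2$ cancel. (The paper's construction in \cref{claim:ReverseMarkovTight}, with $\tilde{q}_i \propto 2^{i}$ and $\tilde\delta_i = 2^{-i/2}$, is your construction up to reindexing and a different normalization.) Where you diverge in presentation: the paper first abstracts the problem to a random variable $\tilde X$ (\cref{cl:hellExp}, \cref{cl:hellExpGamma}), uses a symmetric two-sided instance on $[2m]$ to keep the likelihood ratios centered, and invokes a separately-proved tightness result for the reverse Markov inequality (\cref{claim:ReverseMarkovTight}); you instead do a direct one-sided computation with an explicit $\epsilon = c_0\sqrt{\rho/K}$ scaling. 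The direct route is cleaner here, and the $\epsilon$ scaling neatly avoids the paper's calibration of $k$ against a normalization constant; the paper's modular route buys the reusable statement that the reverse Markov inequality itself is tight, which you don't get from your in-line estimate. The bookkeeping you flag — the per-bin bound $(\sqrt{p(B_j)}-\sqrt{q(B_j)})^2 \le (p(B_j)-q(B_j))^2/q(B_j) \lesssim \epsilon^2$ uniformly over position $a$ and width, the bin containing element $0$, and the Taylor lower bound $\hel^2(p,q) \gtrsim K\epsilon^2$ using $\epsilon\, 2^{K/2} = o(1)$ — all does go through, so there is no gap; you should just spell those estimates out in the final write-up.
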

The proof of \Cref{lem:HellTight} is given in \Cref{app:tightRevDatProc}.

\subsection{Proof sketch of \Cref{thm:quant-scheme} and \Cref{cor:hellQuant}}
\label{sec:proof_sketch_of_quant_scheme}

We will focus on the case of the Hellinger distance and $D=2$.
The first step is to establish the following result:

\begin{lemma}[Reverse Markov inequality]
\label{lem:revMarkovBinary}
Let $X$ be a random variable over $[0,1)$, supported on at most $k$ points, with $\E[X] > 0$. Let $k' = 1 + \log(1/\E [X])$. Then
\begin{align}
\label{eq:revMarkovBinart}
\sup_{\delta \in [0,1)} \delta \P\left( X \geq \delta \right) \geq   \frac{\E[X]}{13R},
\end{align}
where $R = \min\{k,k'\}$.
\end{lemma}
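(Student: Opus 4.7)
The plan is to prove the lower bound via a dyadic decomposition of $[0,1)$. Let $M := \sup_{\delta \in [0,1)} \delta\, \P(X \geq \delta)$ denote the quantity to bound from below. For each integer $i \geq 1$, define the dyadic block $A_i := [2^{-i}, 2^{-i+1})$; together these partition $(0,1)$. The starting observation is that, by the definition of $M$ applied at $\delta = 2^{-i} \in [0,1)$,
\[
2^{-i+1}\, \P(X \in A_i) \;\leq\; 2^{-i+1}\, \P(X \geq 2^{-i}) \;\leq\; 2M,
\]
so one may later pay $2M$ for each nonempty dyadic block when trying to cover all of $\E[X]$.

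The first step is to bound $\E[X]$ from above by the dyadic sum: since $X < 2^{-i+1}$ on $A_i$, we have $\E[X] \leq \sum_{i \geq 1} 2^{-i+1}\, \P(X \in A_i)$. The second step truncates this sum at $I := \lceil 2 + \log_2(1/\E[X])\rceil$. This choice is designed so that the tail $\sum_{i > I} 2^{-i+1}\, \P(X \in A_i) \leq \sum_{i > I} 2^{-i+1} = 2^{-I+1}$ is at most $\E[X]/2$. Crucially, this tail estimate does not invoke $M$; it uses only the geometric decay of the block widths. Moving the tail to the left-hand side gives
\[
\frac{\E[X]}{2} \;\leq\; \sum_{i \leq I} 2^{-i+1}\, \P(X \in A_i) \;\leq\; 2M \cdot \#\{\, i \leq I : \P(X \in A_i) > 0 \,\}.
\]

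The third step unifies the two regimes of the bound by exploiting both the truncation and the support constraint. Because $X$ is supported on at most $k$ points, at most $k$ blocks $A_i$ are nonempty; trivially there are at most $I$ indices $i \leq I$. Hence the cardinality on the right is at most $\min(I, k)$. A short computation shows $\min(I, k) \leq 3R$: from $k' = 1 + \log(1/\E[X])$ one has $I \leq 3 + \log_2(1/\E[X]) = 3 + (k' - 1)/\ln 2 \leq 3k'$ for every $k' \geq 1$, and therefore $\min(I, k) \leq \min(3k', k) \leq 3 \min(k', k) = 3R$. Substituting yields $\E[X]/2 \leq 6MR$, and thus $M \geq \E[X]/(12R) \geq \E[X]/(13R)$, which is the claim. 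The only delicate aspect of the argument is bookkeeping constants across log bases and the two regimes $k' \leq k$ and $k \leq k'$; no step requires a clever construction, and the slack in $I \leq 3k'$ already yields the target constant $13$ with room to spare.
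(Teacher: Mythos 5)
Your proof is correct, and it takes a genuinely different route from the paper's. The paper's proof (of the more general \cref{lem:revMarkovD}) proceeds in two separate branches: the dependence on $k$ is established by an ordering argument that picks out the top-$(D-1)$ terms by the quantity $\delta_i p_i$, and the dependence on $k'$ is established by a proof-by-contradiction that integrates the survival function over a geometric sequence of scales. Your dyadic decomposition collapses both branches into one argument: write $\E[X] \leq \sum_{i} 2^{-i+1}\P(X \in A_i)$, truncate at $I \approx \log_2(1/\E[X])$ so the tail eats only half of $\E[X]$, note each surviving summand is at most $2M$ by definition of $M$ evaluated at $\delta = 2^{-i}$, and observe that the number of nonzero summands is at most $\min(I,k)$ (at most $I$ indices, and at most $k$ nonempty dyadic blocks since $X$ has $k$ support points). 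The arithmetic $\min(I,k) \leq 3R$ is routine and the constant comes out to $12$, comfortably inside $13$. The practical trade-off is that your unified argument is more elementary and arguably more illuminating for $D = 2$, but it does not immediately extend to the full $D$-thresholds statement of \cref{lem:revMarkovD}, where the paper's integral-contradiction structure is doing the work of optimizing a $(D-1)$-term telescope rather than a single threshold; your version would need a genuinely new idea to handle that generalization.

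One small cosmetic point: the paper's convention for $\log$ in $k' = 1 + \log(1/\E[X])$ is base $2$ (as can be read off the proof of \cref{lem:revMarkovD}, where $\log(1/x_*) = D + \log(1/\E[Y]) + \log(1/k')$ for $x_* = 2^{-D}\E[Y]/k'$). You hedge by writing $(k'-1)/\ln 2$, which treats $\log$ as natural; this makes your intermediate bound slightly looser, but your claimed inequality $I \leq 3k'$ holds under either convention, so nothing breaks.
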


The canonical version of Markov's inequality states that $\delta \P(X \geq \delta) \leq \E[X]$ for any non-negative random variable $X$ and any $\delta$.
Since the direction of the inequality is reversed in \Cref{lem:revMarkovBinary} (up to a shrinkage factor of roughly $\log(1/\E[X])$), we call it a \emph{reverse} Markov inequality.
A generalized version of \cref{lem:revMarkovBinary}
  for the case $D> 2$, along with its proof, is given in \cref{lem:revMarkovD}.
\begin{remark}
\label{rem:RevMarkovTight} 
Note that \cref{lem:revMarkovBinary} is tight, as shown in \cref{claim:ReverseMarkovTight}, which is crucially used in the proof of \cref{lem:HellTight}. 
\end{remark}
\begin{remark}
\label{rem:revMarkovVsExisting}
It is instructive to compare the guarantee of \cref{lem:revMarkovBinary} with existing results in the literature:
\begin{enumerate}
  \item The Paley-Zygmund inequality~\cite[Corollary 3.3.2]{delaGine99} states that for any $ \delta \in (0, \E[X])$, we have 
$$\P(X \geq \delta) \geq \left( 1 - \frac{\delta}{\E[X]} \right)^2 \frac{(\E[X])^2}{\E[X^2]}.$$
Multiplying both sides by $\delta$ and optimizing the lower bound over $\delta$ (achieved at $\delta =  \E[X]/3$) yields
\begin{align*}
\sup_{\delta \geq 0} \delta \P(X \geq \delta)  \gtrsim \E[X] \cdot \frac{1}{\E[X^2]/(\E[X])^2}  .
\end{align*}
Note that the shrinkage factor is $\E[X^2]/(\E[X])^2$, which is at most $1 / \E[X]$, but could be exponentially larger than the factor $\log(1/\E[X])$ provided in \cref{lem:revMarkovBinary}. (For example, consider a random variable with $\P(X = 0) = 1- p$ and $\P(X = 1/2) = p$: We have $\E[X^2]/(\E[X])^2 = 1 / p$, whereas $\log(1/\E[X]) = \log(2/p)$.)

\item A standard version of the reverse Markov inequality~\cite[Lemma B.1]{ShaBen14} for a random variable bounded in $[0,1]$ states that $\P(X \geq \delta) \geq \frac{\E[X] - \delta}{1 - \delta}$, for $\delta \in (0,\E[X])$.
Multiplying both sides by $\delta$ and optimizing the bound over $\delta \in (0, \E[X])$, under the condition that $\E[X] \leq 0.1$, gives us the following: 
\begin{align*}
\sup_{\delta \geq 0} \delta \P(X \geq \delta) \gtrsim (\E[X])^2 = \E[X] \cdot \frac{1}{1/\E[X]} ,
\end{align*}
i.e., the shrinkage factor is  $1/\E[X]$, which is again exponentially larger than $\log(1/\E[X])$. 
\end{enumerate}
\end{remark}

Using \cref{lem:revMarkovBinary}, we now sketch the proof that there exists a channel $\bT \in \cTT_2$ achieving $\hel^2(\bT p, \bT q) \gtrsim \hel^2(p,q)/R$. For simplicity of notation, we assume that for all $i \in [k]$, we have $p_i >0$ and $q_i > 0$. We first define the sets
\begin{align}
\label{eq:AdefinitionMainBody}
A_{l,u} & = \Big\{ i \in [k]: \frac{p_i}{q_i} \in [l,u) \Big\}, \notag \\
A_{l,\infty} & = \Big\{ i \in [k]: \frac{p_i}{q_i} \in [l,\infty] \Big\}.
\end{align}

Then $\hel^2(p,q)$ can be decomposed as follows:
\begin{align*}
\hel^2(p,q) &= \sum_{i \in A_{0,1/2}} \left( \sqrt{p_i} - \sqrt{q_i} \right)^2 + \sum_{i \in A_{1/2,1}} \left( \sqrt{p_i} - \sqrt{q_i} \right)^2 \\
&\quad\quad+ \sum_{i \in A_{1,2}} \left( \sqrt{p_i} - \sqrt{q_i} \right)^2 + \sum_{i \in A_{2,\infty}} \left( \sqrt{p_i} - \sqrt{q_i} \right)^2.
\end{align*}
We note that at least one of these terms must be at least $\hel^2(p,q)/4$.
By symmetry, it suffices to consider the cases where the sum over $ A_{2,\infty}$ is at least $\hel^2(p,q)/4$, or the sum over $ A_{1,2}$ is at least $\hel^2(p,q)/4$.

\paragraph{Case $1$: $\sum_{i \in A_{2,\infty}} \left( \sqrt{p_i} - \sqrt{q_i} \right)^2 \geq \hel^2(p,q)/4$} %
\label{par:case_1}

Let $\bT \in \cTT_2$ be a threshold test with threshold $\Gamma = \{2\}$, i.e., $\bT$ is a deterministic channel that corresponds to the function $i \mapsto \I_{p_i/q_i \geq 2}$. 
We note that $\bT p$ and $\bT q$ are binary distributions, characterized by $p'= \sum_{i\in A_{2,\infty}} p_i$ and $q' =  \sum_{i\in A_{2,\infty}} q_i$, respectively.
Then
\begin{align*}
 \hel^2(p,q) \leq 4 \sum_{i \in A_{2, \infty}} \left(\sqrt{p_i}- \sqrt{q_i}\right)^2 \leq 4 \sum_{i \in A_{2, \infty}} p_i = 4 p',
 \end{align*}
where the first inequality uses the assumption.
Using the fact that $p'\geq 2q'$, we also have
\begin{align*}
\hel^2(\bT p,\bT q) 
\geq \left(\sqrt{p'}- \sqrt{q'}\right)^2
\geq \left(\sqrt{p'}- \sqrt{p'/2}\right)^2 =(1- 1/\sqrt{2})^2 p' \geq 0.01p'.
\end{align*}
Combining the two displayed equations, we obtain $\hel^2(\bT p,\bT q) \geq \hel^2(p,q)/400$. This completes the proof.

\paragraph{Case $2$: $\sum_{i \in A_{1,2}} \left( \sqrt{p_i} - \sqrt{q_i} \right)^2 \geq \hel^2(p,q)/4$} %
\label{par:case_2}

For $i \in A_{1,2}$, let $\delta_i:= (p_i - q_i)/q_i$, which lies in $[0,1)$.
Consider the random variable $X$ over $[0,1)$ such that for $i \in A_{1,2}$, we define $\P(X= \delta_i)= q_i$ and $\P(X = 0) = 1 - \sum_{i \in A_{1,2}} q_i$.
Let $\delta \in [0,1)$ be arbitrary (to be decided later). Consider the channel $\bT$ corresponding to the threshold $1+\delta$.
Suppose for now that the following inequalities hold:
\begin{align}
\label{eq:TaylorMainBody}
\hel^2(p,q) \lesssim \E X^2 \quad \text{and} \quad  \hel^2(\bT p, \bT q) \gtrsim \delta^2 \P(X \geq \delta), 
\end{align}
which we will establish shortly using a Taylor approximation.
Letting $Y = X^2$ and $\delta' = \delta^2$, we obtain the following inequality using the bounds~\eqref{eq:TaylorMainBody}:
\begin{align}
\label{eq:TaylorMainBody2}
\frac{\hel^2(\bT p, \bT q)}{\hel^2(p,q)} \gtrsim \frac{\delta^2 \P(X \geq \delta)}{\E[X^2]} = \frac{\delta' \P(Y \geq \delta')}{\E[Y]}.
\end{align}
Fix
\begin{equation*}
R = \log(1/\E[Y]) = \log(1/\E[X^2]) = \log( O(1/ \hel^2(p,q)).
\end{equation*}
By \cref{lem:revMarkovBinary}, note that there exists $\delta'$ (and therefore also $\delta$) such that $\delta' \P(Y \geq \delta') \gtrsim \E[Y]/R$, which yields the desired lower bound $\frac{\hel^2(\bT p, \bT q)}{\hel^2(p,q)} \gtrsim \frac{1}{R} $, using inequality~\eqref{eq:TaylorMainBody2}.

We now provide a brief proof sketch of the bounds~\eqref{eq:TaylorMainBody}.  We derive the first bound using the following arguments:
 \begin{align*}
 \hel^2(p,q) &\leq 4 \sum_{i \in A_{1,2}} \left( \sqrt{p_i} - \sqrt{q_i} \right)^2 
      = 4 \sum_{i \in A_{1,2}} q_i( \sqrt{1 + \delta_i} - 1 )^2 
      \leq 4 \sum_{i \in A_{1,2}} q_i \delta_i^2
      = 4 \E[X^2],
 \end{align*}
 where the first inequality uses the assumption and the second inequality uses the fact that $\sqrt{1 + x} \leq 1 +x$ for $x \geq 0$.

We now turn our attention to the second bound~\eqref{eq:TaylorMainBody}.
Recall that $\bT$ is a channel corresponding to the threshold $1 + \delta$. 
Let $p' = \sum_{i: \delta_i \in  [\delta,1)} p_i$ and $q' = \sum_{i: \delta_i \in [\delta,1) }q_i$.
Note that $q' = \P(X \geq \delta)$ and $p'-q' = \sum_{i: \delta_i \in [\delta,1)} \delta_i q_i = \E[X \I_{X \geq \delta}]$. Thus, we have $(p'-q')/q' = \E[X | X \geq \delta] \geq \delta$.

It can be shown that $\hel^2(\bT p, \bT q) \geq (\sqrt{p'} - \sqrt{q'})^2$ (cf.\ inequality \eqref{eq:f-DivRestricted} in \Cref{app:reverse_data_processing}), which leads to the following inequalities:
\begin{align*}
\hel^2(\bT p, \bT q) &\geq  (\sqrt{p'} - \sqrt{q'})^2 
          = q' \left( \sqrt{1 + \frac{p'-q'}{q'}} - 1 \right)^2 
\gtrsim  q' \left(  \frac{p'-q'}{q'}\right)^2  
\geq \delta^2 \P(X \geq \delta),
\end{align*}
based on properties of the function $x \mapsto (\sqrt{1+x} - 1)/x$ on $(0,1]$, extended by continuity to $[0,1]$.

\subsection{Reverse Markov inequality: Proof and tightness}

In this section, we state the generalized version of the reverse Markov inequality (\Cref{lem:revMarkovBinary}) as \Cref{lem:revMarkovD}, and show that it is tight in general (\Cref{claim:ReverseMarkovTight}). 

\begin{restatable}[Generalized reverse Markov inequality]{lemma}{RevMarkovD}
\label{lem:revMarkovD}
Let $Y$ be a random variable over $[0, \beta)$ with expectation $\E[Y] > 0$. Let $k' = 1 +  \log(\beta/\E [Y])$. Then
\begin{align}
\label{eq:revMarkovD}
\sup_{0 \le \nu_1 \le  \cdots \le \nu_D = \beta} \sum_{j=1}^{D-1} \nu_{j} \P\left( Y \in [\nu_{j}, \nu_{j+1}) \right) \geq \frac{1}{13} \E[Y] \min \left\{1, \frac{D}{R}  \right\},
\end{align}
where $R = k' := 1 +  \log(\beta/ \E[Y])$. 
Furthermore, the bound~\eqref{eq:revMarkovD} can be achieved by $\nu_j$'s such that $\nu_j = \min\{\beta,x2^{j}\}$ for some $x\in[0,\beta]$.

For the special case where $Y$ is supported on $k$ points, we may set $R = \min\{k,k'\}$, and there is a $\poly(k,D)$ algorithm to find $\nu_j$'s that achieve the bound~\eqref{eq:revMarkovD}.
\end{restatable}
\begin{proof}
 We can safely assume that $D \leq R$. Under this assumption on $D$, we will show that the desired expression is lower-bounded by both of the following quantities (up to constants):
$\frac{\E[Y] D}{k}$ and $\frac{\E[Y] D}{k'}$.
We will also assume that $\beta=1$; otherwise, it suffices to apply the following argument to $\frac{Y}{\beta}$.

\paragraph{Dependence on $k$}
Suppose $Y$ has support size $k$.\footnote{It is easy to see that if the support size is strictly smaller than $k$, we have a tighter bound.} Let the support elements be $\{\delta_i'\}_{i=1}^{k}$, such that $\delta'_1 < \delta'_2 < \dots < \delta'_k < 1$.  Let $\{p_i\}_{i=1}^{k}$ be such that $\P[Y= \delta_i'] = p_i$ and $\sum_{i=1}^{k} p_i = 1$.

It suffices to prove that there exists a labeling  $\pi: [D-1] \to [k]$ such that $\pi(1) < \pi(2) < \dots < \pi(D-1)$ and the following bound holds:
\begin{align}
\label{Eq:RevWeakerClaim}
\sum_{j=1}^{D-1} \delta'_{\pi(j)} p_{\pi(j)} \geq \E[Y] \left(\frac{D-1}{k}  \right).
\end{align}
This is true because for $j \in [D-1]$, we have  $p_{\pi(j)} = \P\left\{ Y \in [\delta_{\pi(j)}, \delta_{\pi(j) + 1})  \right\}   \leq \P\left\{ Y \in [\delta_{\pi(j)}, \delta_{\pi(j+1)}) \right\}$, where we define $\delta_{k+1} := 1$ and $\pi(D):= 1$, and the desired conclusion follows by setting $\nu_j = \delta_{\pi(j)}$.
In the rest of the proof, we will show that such a $\pi$ exists.

Let $\sigma: [k] \to [k]$ be a permutation such that $p_{\sigma(i)} \delta_{\sigma(i)} \geq  p_{\sigma(i+1)} \delta_{\sigma(i+1)}$. 
Then we have
\begin{align*}
\frac{\E [Y]}{k} &= \frac{\sum_{i=1}^k p_i \delta_i}{k} =  \frac{\sum_{i=1}^k p_{\sigma(i)} \delta_{\sigma(i)}}{k} \leq \frac{\sum_{i=1}^{D-1} p_{\sigma(i)} \delta_{\sigma(i)}}{D-1} = \frac{\sum_{i=1}^{D-1} p_{\pi'(i)} \delta_{\pi'(i)}}{D-1},
\end{align*}
for some $\pi': [D-1] \to [k]$ such that $ \pi'(1) < \pi'(2) < \dots < \pi'(D-1)$. Thus, we have established inequality \eqref{Eq:RevWeakerClaim}.
Note that the desired bound is achieved by choosing the $\nu_j$'s as follows: Let $S$ be the set of top $D-1$ elements among the support of $Y$ that maximize $y\P(Y=y)$, and let the $\nu_j$'s have values in $S$ such that they are increasing and distinct. It is clear that this assignment can be implemented in $\poly(k,D)$ time.

\paragraph{Dependence on $k'$}

We begin by noting that the desired expression can also be written as
\begin{align*}
\sum_{j=1}^{D-1}\left( \nu_{j} - \nu_{j-1} \right) \P\left\{ Y \geq \nu_j\right\},
\end{align*}
where $\nu_0 := 0$.
We need to obtain a lower bound on the supremum of this expression over the $\nu_j$'s.
In fact, we will show a stronger claim, where we fix the $\nu_j$'s in a particular way: We will take $\nu_j$ to be of the form $x 2^{j-1}$, for $j \in [D-1]$, and optimize over $x \in (0,1)$. Note that we can allow $\nu_j \geq 1$ without loss of generality, because their contribution to the desired expression would then be $0$.  
In the rest of the proof, we will show the following claim for $c_1 = \frac{1}{13}$:
\begin{align}
\label{eq:RevCauchyExpSequence}
\sup_{x \in (0,1)} x \P\left\{ Y \geq x\right\} + \sum_{j=2}^{D-1} \left( x 2^{j-1} - x 2^{j-2} \right)\P\left\{ Y \geq x2^{j-1}\right\} \geq c_1 \E[Y]\frac{D}{k'}.
\end{align}

Suppose that the desired conclusion does not hold. We will now derive a contradiction.
Under the assumption that inequality \eqref{eq:RevCauchyExpSequence} is false, we  have the following, for each $x\in (0,1)$:
\begin{align*}
c_1 \E[Y]\frac{D}{k'}  &>
 x \P\left\{ Y \geq x \right\}  +\sum_{j=2}^{D-1} x \left( 2^{j-1} - 2^{j-2} \right) \P\left\{ Y \geq x 2^{j-1}\right\}  \\
&= x \left( \P\left\{ Y \geq x \right\} + \sum_{j=1}^{D-2} 2^{j-1} \P\left\{ 2^{-j} Y \geq x\right\} \right).
\end{align*}
We thus obtain the following, for all $x\in(0,1)$:
\begin{align}
 \label{eq:DaryMsgFirstUpp}
 \P\left\{ Y \geq x \right\} + \sum_{j=1}^{D-2} 2^{j-1} \P\left\{ 2^{-j} Y \geq x\right\}   < c_1 \E[Y] \cdot \frac{D}{k'} \cdot \frac{1}{x}.
\end{align}
Using the fact that the probabilities are bounded by $1$, we also have the following bound on the expression on the left side of inequality \eqref{eq:DaryMsgFirstUpp}:
\begin{align}
 \label{eq:DaryMsgSecUpp}
\P\left\{ Y \geq x \right\} + \sum_{j=1}^{D-2} 2^{j-1}\P\left\{ 2^{-j} Y \geq x\right\}  &\leq 1 + \sum_{j=1}^{D-2} 2^{j-1}  = 2^{D-2}.
\end{align}
Combining the two bounds in inequalities \eqref{eq:DaryMsgFirstUpp} and \eqref{eq:DaryMsgSecUpp}, the following holds for every $x \in (0,1)$:
\begin{align}
 \label{eq:DaryMsgJointUpp}
\P\left\{ Y \geq x \right\} + \sum_{j=1}^{D-2} 2^{j-1} \P\left\{ 2^{-j} Y \geq x\right\}  \leq \min\left\{2^{D-1}, \frac{c_1 D\E[Y]}{k' x} \right\}.
\end{align}
Using the fact that $Y \in [0,1]$, we also have the following for every $a > 1$: 
\begin{align*}
\E\left[\frac{Y}{a}\right] = \int_{0}^{1/a} \P\left(\frac{Y}{a} \geq t \right) dt  = \int_{0}^{1} \P\left(\frac{Y}{a} \geq t \right)dt,
\end{align*}
implying that
\begin{align}
\label{eq:DaryMsgExpEquality}
\int_{0}^{1} \left( \P\left\{ Y \geq t \right\} + \sum_{j=1}^{D-2}  2^{j-1}\P\left\{ Y2^{-j} \geq t \right\}  \right) dt 
 = \E[Y] + \sum_{j=1}^{D-2}2^{j-1}\E[2^{-j}Y] =  \frac{D\E[Y]}{2}.
\end{align}
Combining inequalities \eqref{eq:DaryMsgJointUpp} and \eqref{eq:DaryMsgExpEquality}, we obtain the following, for an arbitrary $x_* \in (0,1)$:
\begin{align}
\nonumber\E[Y] &= \frac{2}{D} \int_{0}^{1} \left( \P\left\{ Y \geq t \right\} + \sum_{i=1}^{D-2}  2^{i-1}\P\left\{ Y2^{-i} \geq t \right\}  \right) dt \\
\nonumber&\leq \frac{2}{D} \int_{0}^{1} \min\left\{2^{D-1}, \frac{c_1D\E[Y]}{k' x} \right\}   dt \\
\label{eq:DaryMsgContradFinal}&\leq \frac{2^{D}x_*}{D} + \frac{2c_1\E[Y]}{k'} \log\left( \frac{1}{x_*} \right).
 \end{align}
Let $x_* = 2^{-D} \left( \E[Y] / k' \right)$. Then
$$\log(1/x_*) = D + \log(1/\E[Y]) + \log(k') \leq 3k',$$
where we use the fact that $\max\{D,\log(1/\E[Y])\} \leq k'$.
Using $k' \geq 1$ and $D \geq 2$, the expression on the right-hand side of inequality~\eqref{eq:DaryMsgContradFinal} can be further upper-bounded, to obtain the following inequality:
\begin{align*}
\E[Y] \leq \left( \frac{\E[Y]}{k'D} \right) + \frac{2c_1\E[Y]}{k'} \left( 3k' \right) \leq \frac{\E[Y]}{2} + 6c_1 \E[Y],
\end{align*}
which is a contradiction, since $\E[Y] > 0$ and $c_1 < \frac{1}{12}$.
Thus, we conclude that inequality \eqref{eq:RevCauchyExpSequence} is true.
\end{proof}

\begin{restatable}[Tightness of reverse Markov inequality]{claim}{ClaimRevMarkovTIght}
\label{claim:ReverseMarkovTight}
There exist constants $c_1,c_2,c_3, c_4,c_5$, and $c_6$ such that for every $\rho \in (0,c_5)$,
there exists an integer $k \in [c_3 \log(1/\rho), c_4 \log(1/\rho)]$ and a probability distribution $p$,  supported over $k$ points in $(0,0.5]$, such that the following hold: 
\begin{enumerate}
  \item  $\E[X^2] \in [c_1\rho,c_2 \rho] $, and for every $D \leq 0.1k$,
\begin{align}
\label{eq:RevMarkovTight1}
\sup_{0 < \delta_1 < \cdots < \delta_D = 1} \sum_{j=1}^{D-1} \P\left\{ X \geq \delta_j \right\} \left( \E\left[ X | X \geq \delta_j \right] \right)^2 
 \leq c_6 \cdot\E [X^2]  \frac{ D }{R'},
\end{align}
where $R' = \max \{k, k'\} $ and $ k' = \log(3/\E[X^2])$.
\item $\E[Y] =  [c_1\rho,c_2 \rho]$, and 
 \begin{align}
 \label{eq:RevMarkovTight2}
\sup_{0 < \delta_1' < \cdots < \delta_{D}' = 1} \sum_{j=1}^{D-1} \delta_{j}' \P\left( Y \in [\delta_{j}', \delta_{j+1}') \right) 
\leq c_6 \cdot \E[Y] \frac{D}{R'},
\end{align}
where $R' = \max \{k, k'\}$ and $ k' = \log(3/\E[Y])$. Moreover, $R' = \Theta(\log(1/\rho))$.
 \end{enumerate}  
\end{restatable}

\begin{proof}
For now, let $k \in \N$ be arbitrary; we will choose $k$ so that $\E[X^2] \in [c_1\rho, c_2 \rho]$.
Consider the following discrete random variable $Y$ supported on $\{2^{-i}: i \in [k]\}$:
\begin{align*}
\P\left\{ Y = 2^{-i} \right\} = r 2^i,
\end{align*}
where $r$ is chosen so that it is a valid distribution, i.e., $r$ satisfies $1 = \sum_{i =1}^k r2^{i} = 2r (2^{k}-1)$.
Let $X = \sqrt{Y}$.
We then have
\begin{align}
\E [X^2] =\E [Y] = \sum_{i=1}^ k 2^{-i} \left( r 2^{i} \right) =rk. 
\label{eq:RevMarkovTight3}
\end{align}

Consider a $\delta_i' \in [2^{-j}, 2^{-(j-1)})$, for some $j \in [k]$, and let $\delta_i = \sqrt{\delta'_i}$. For any such choice, we obtain the following:
\begin{align}
\label{eq:revMarkovTight4}
\P(X \geq \delta_i) &= \P(Y \geq \delta_i') = \P(Y \geq 2^{-j}) = \sum_{i \in [j]} \P\left\{Y = 2^{-i} \right\} = \sum_{i \in [j]} r 2^i = 2r\left( 2^{j} - 1 \right) \leq 2r2^j.
\end{align}
Thus, for any $\delta'$, we have $\delta'\P\left\{ Y \geq \delta' \right\} \leq 2 r$, showing that the expression in inequality \eqref{eq:RevMarkovTight2} is upper-bounded by $2(D-1)r$, which is equal to $\frac{2\E[Y] (D-1)}{k}$, by equation \eqref{eq:RevMarkovTight3}. It remains to show that $R \leq c_6k/2$.

We first calculate bounds on $k$ so that $\E[Y] = \Theta(\rho)$.
Note that by construction, we have $r = 1/\left( 2(2^k - 1) \right)$, implying that $r \in [2^{-k+1}, 2^{-k-1}]$.
Since $\E[Y] = r k$, it suffices to choose $k$ such that $f(k) \in [2c_1 \rho, 0.5c_2 \rho]$, where $f(j):=2^{-j}j$.
As $\frac{f(j+1)}{f(j)} \in (1/2,1)$ for $j > 1$, we have $f(\lfloor \ln(1/\rho)\rfloor) \geq \rho \lceil\ln(1/\rho)\rceil$ and $f(\lceil 2\ln(1/\rho)\rceil) \leq \rho^2 \lceil\log(1/\rho)\rceil$, so we know that such a $k$ exists in $ [ \ln(1/\rho), 2 \ln(1/\rho)] $ when $c_1 = 0.5$, $c_2 = 10$, and $c_5 = 2^{-20}$.

We now calculate the quantity $R$. By the definition of $k'$,  we have 
\begin{align*}
 k' &= \log\left( \frac{3}{\E[X^2]} \right) = \log\left( \frac{3}{rk} \right) \geq \log\left( \frac{3 \cdot 2^{k-1}}{k} \right) = k \log 2  - \log k + \log(3/2). 
 \end{align*}
As $k$ is large enough, we have $k' \in [0.5k,2k]$.
Since $R = \max\{k, k'\}$, we have $R \in [0.5k, 2k]$.  This completes the proof of the claim in inequality \eqref{eq:RevMarkovTight2}, with $c_6=4$.

We now prove the claim in inequality \eqref{eq:RevMarkovTight1}.
We begin with the following:
\begin{align*}
\E[X \I_{X \geq 2^{-j/2}}] &= \sum_{i \in [j]} 2^{-0.5 i}\left( r 2^i \right) = \sum_{i \in [j]} r 2^{0.5 i} = (r \sqrt{2})\left(  \frac{2^{0.5j} -1 }{\sqrt{2}-1} \right) \leq 10 r 2^{0.5j}.
\end{align*}
For $\delta_i \in [2^{-j/2}, 2^{-(j-1)/2})$, for some $j$, we have the following:
\begin{align*}
\P\left\{ X \geq \delta_i \right\} \left( \E\left[ X | X \geq \delta_i \right] \right)^2 &= \P(X \geq 2^{-j/2}) \left( \E[X | X \geq 2^{-j/2}] \right)^2\\
 &= \frac{\left( \E[X \I_{X \geq 2^{-j/2}}] \right)^2}{\P(X \geq 2^{-j/2})}\\
  &\leq \frac{100 r^22^j }{2r(2^j - 1)} \leq 100r.
\end{align*}
Thus, the supremum over any arbitrary $\delta_i$ is also upper-bounded by $100r$.
Hence, we can upper-bound the expression on the left-hand side of inequality \eqref{eq:RevMarkovTight1} by $100(D-1)r$, which is equal to $100 \cdot \frac{\E[X^2] (D-1)}{k}$.
Using the same calculations as in the first part of the proof, we prove inequality \eqref{eq:RevMarkovTight1} with $c_6 = 200$.
\end{proof}

\section{Simple binary hypothesis testing} %
\label{sec:BinHypTest}

We will now apply the results from the previous sections to simple binary hypothesis testing under communication constraints.
Let $\bT$ be a fixed channel, and suppose all users use the same channel $\bT$.
In this setting, \Cref{fact:testing} implies that the sample complexity is $\Theta(1/(\hel^2(\bT p, \bT q)))$. 
Without any communication constraints, the sample complexity of the best test is known to be $\Theta(1/(\hel^2(p,q)))$.
Thus, the additional (multiplicative) penalty of using the channel $\bT$ is $ \frac{\hel^2(p,q)}{\hel^2(\bT p, \bT q)}$, which is at least $1$, by the data processing inequality.
As we are allowed to choose any channel $\bT \in \cT_D$, we would like to choose the channel that minimizes this ratio, which was precisely studied in \cref{sec:RevDataProc}.

\subsection{Upper bound}
\label{sec:upper_bound_for_simple_binary_hypothesis_testing}

We begin with an upper bound, which follows directly from \cref{cor:hellQuant}.

\begin{theorem}
\label{thm:ub-Simple-D} 
There exists a positive constant $c$ satisfying the following: For any $k \in \N$, let $p$ and $q$ be two distributions on $\Delta_k$ and  define $\nstar := \nstar(\{p,q\})$. 
For any $D \geq 2$, the sample complexity of simple binary hypothesis testing with identical channels satisfies
\begin{align}
\label{eq:ub-Simple-D}
 \nide(\{p,q\}, \cT_D) \leq c \cdot \nstar \cdot \max \left\{ 1, \frac{\min\{k, \log \nstar\} }{D} \right\}.
 \end{align}
Furthermore, there is an algorithm which, given $p$, $q$, and $D$, finds a channel $\bT^* \in \cTT_{D}$ in $\poly(k,D)$ time that achieves the rate in inequality~\eqref{eq:ub-Simple-D}. 
\end{theorem}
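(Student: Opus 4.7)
The plan is to combine the reverse data processing inequality for Hellinger distance (\Cref{cor:hellQuant}) with the classical characterization of the binary hypothesis testing sample complexity in terms of Hellinger distance (\Cref{fact:testing}). The argument is essentially a reduction: apply \Cref{cor:hellQuant} to obtain a single good channel, use it identically on all $n$ samples, and then invoke the standard Hellinger-based sample complexity for the pushed-forward distributions $\bT^* p$ and $\bT^* q$.

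More concretely, first I would invoke \Cref{cor:hellQuant} with the given $p$, $q$, and $D$ to produce a threshold channel $\bT^* \in \cTT_D$ satisfying
\begin{equation*}
\hel^2(\bT^* p, \bT^* q) \geq \frac{1}{1800} \cdot \hel^2(p,q) \cdot \min\Bigl(1, \frac{D}{\min(k,k')}\Bigr),
\end{equation*}
where $k' = \log(4/\hel^2(p,q))$. Next, I would let each user apply this same channel $\bT^*$ to their sample, so that the central server observes i.i.d.\ samples from $\bT^* p$ or $\bT^* q$. By \Cref{fact:testing}, part~2, the simple binary hypothesis testing problem between $\bT^* p$ and $\bT^* q$ can be solved (via the likelihood ratio test at the server) with sample complexity $O(1/\hel^2(\bT^* p, \bT^* q))$. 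This yields
\begin{equation*}
\nide(p,q,\cT_D) \,\lesssim\, \frac{1}{\hel^2(\bT^* p, \bT^* q)} \,\lesssim\, \frac{1}{\hel^2(p,q)} \cdot \max\Bigl(1, \frac{\min(k,k')}{D}\Bigr).
\end{equation*}

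To convert this into the stated form, I would use the identity $\nstar = \Theta(1/\hel^2(p,q))$ (again from \Cref{fact:testing}) to rewrite $1/\hel^2(p,q) = \Theta(\nstar)$ and $k' = \log(4/\hel^2(p,q)) = \Theta(\log \nstar)$ (absorbing the additive constant into the $\Theta$). Substituting gives exactly inequality~\eqref{eq:ub-Simple-D} for some absolute constant $c$.

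For the computational claim, the algorithm is immediate from \Cref{cor:hellQuant}, which explicitly provides a $\poly(k,D)$-time procedure that produces the channel $\bT^*$ achieving the Hellinger preservation bound. The server's likelihood ratio test on the quantized samples is trivially computable. There is no real obstacle in this proof; the entire weight of the theorem has been offloaded onto \Cref{cor:hellQuant} (and ultimately \Cref{thm:quant-scheme} and the reverse Markov inequality), so the only thing worth being careful about is bookkeeping the constants and ensuring the substitution $k' = \Theta(\log \nstar)$ is valid (which it is as long as $\hel^2(p,q)$ is bounded away from $1$, a regime that can be handled separately by noting that if $\hel^2(p,q) = \Theta(1)$, then $\nstar = \Theta(1)$ and the bound is trivial).
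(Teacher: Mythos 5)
Your proposal is correct and is essentially the same argument as the paper's: invoke \Cref{cor:hellQuant} to produce a threshold channel $\bT^*$ preserving the Hellinger distance up to the stated factor, deploy it identically at every user, apply \Cref{fact:testing} to bound $\nide$ by $O(1/\hel^2(\bT^* p,\bT^* q))$, and rewrite $1/\hel^2(p,q)$ and $k'$ in terms of $\nstar$. The paper phrases the same computation via the ratio $g(\bT) = \hel^2(p,q)/\hel^2(\bT p,\bT q)$, but the content, the invocation of \Cref{cor:hellQuant}, and the efficiency claim are identical.
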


\begin{proof}
As noted earlier, for a fixed $\bT$, the sample complexity is
\begin{align*}
\Theta\left( \frac{1}{\hel^2(\bT p, \bT q)} \right) = \Theta\left( \frac{1}{\hel^2(p,q)} \cdot \frac{\hel^2(p,q)}{\hel^2(\bT p, \bT q)} \right) = \Theta\left( \nstar \cdot g(\bT)  \right),
\end{align*}
 where $g(\bT) := \frac{\hel^2(p,q)}{\hel^2(\bT p, \bT q)}$.
 Our proof strategy will be to upper-bound the quantity $\inf_{\bT \in \cT_D} g(\bT)$.
By \cref{cor:hellQuant}, there exists a $\bT^*$ such that $g(\bT^*) \lesssim \max\{1, \min\{k,\log(\nstar)\}/D\}$, since $\nstar= \Theta(1/\hel^2(p,q))$ by \cref{fact:testing}.
Thus, the proof of \cref{thm:ub-Simple-D} follows from \cref{cor:hellQuant} by choosing the optimal $\bT^*$ achieving the bound in \cref{cor:hellQuant}.
As mentioned in \cref{cor:hellQuant}, the channel $\bT^*$ can be found efficiently. 
\end{proof}

\subsection{Lower bound}

We now prove a lower bound, showing that there exist distributions $p$ and $q$ such that the upper bound in \Cref{thm:ub-Simple-D} is tight.
As discussed in \cref{sec:prelim-commConstraint}, an optimal test that minimizes the probability of error under communication constraints is a threshold test based on $\frac{p(i)}{q(i)}$ \cite{Tsitsiklis93}.
However, this notion of optimality is conditioned on the fact that the channels are potentially non-identical; examples exist where such a condition is necessary even for $D=2, M=2$, and $n=2$ \cite{Tsitsiklis88}.

We will show that, up to constants in the sample complexity, it suffices to consider identical channels for simple hypothesis testing.
In fact, we prove a much more general result below that does not rely on restricting the function class to threshold tests.
\begin{restatable}[Equivalence between identical and non-identical channels for simple hypothesis testing]{lemma}{LemIdentVsNonIdent}
\label{lem:IdentVsNonIdent}
Let $\cT$ be a collection of channels from $\cX \to \cY$. Let $p$ and $q$ be two distributions on $\cX$. Then
\begin{align*}
\nnid\left(\{p,q\},\cT \right) = \Theta\left( \nide(\{p,q\}, \cT)\right).
\end{align*}
\end{restatable}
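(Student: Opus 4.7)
The easy direction $\nnid(p,q,\cT) \leq \nide(p,q,\cT)$ is immediate, since identical assignments are a special case of non-identical ones, so the plan focuses on the reverse inequality $\nide(p,q,\cT) \lesssim \nnid(p,q,\cT)$. The strategy is to extract a single sufficiently informative channel from any optimal non-identical scheme and show that using it identically across all users costs only a constant factor in the sample complexity.

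To execute this, I will set $n := \nnid(p,q,\cT)$ and fix channels $\bT_1,\dots,\bT_n \in \cT$ together with a test $\phi$ that achieves error at most $0.1$ in distinguishing $p$ from $q$. The first step is to convert this error bound into a quantitative lower bound on the marginal Hellinger distances: by \cref{fact:testing}, the error guarantee forces $\dtv\bigl(\prod_i \bT_i p,\, \prod_i \bT_i q\bigr) \geq 0.9$, whence by \cref{fact:div} the product distributions satisfy $\hel^2 \geq \dtv^2 \geq 0.81$. Combining Hellinger tensorization (\cref{fact:div}) with the elementary bound $\haff(\bT_i p, \bT_i q) \leq \exp\bigl(-\tfrac12 \hel^2(\bT_i p, \bT_i q)\bigr)$ then yields
\[
\sum_{i=1}^n \hel^2(\bT_i p, \bT_i q) \;\gtrsim\; 1.
\]

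The second step is a pigeonhole argument: some index $i^\ast \in [n]$ must satisfy $\hel^2(\bT_{i^\ast} p, \bT_{i^\ast} q) \gtrsim 1/n$. Setting $\bT^\ast := \bT_{i^\ast} \in \cT$ and deploying it identically across users reduces the testing problem to an unconstrained binary hypothesis test between $\bT^\ast p$ and $\bT^\ast q$, which by \cref{fact:testing} requires only $O\bigl(1/\hel^2(\bT^\ast p, \bT^\ast q)\bigr) = O(n)$ samples. This yields $\nide(p,q,\cT) = O(\nnid(p,q,\cT))$, completing the argument.

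The main subtlety is the tensorization step, which converts a global probability-of-error guarantee into per-coordinate Hellinger control; it relies crucially on the multiplicativity of the Hellinger affinity rather than on any weaker additivity of the per-sample error itself. Once the sum-of-Hellingers bound is in hand, the averaging and the single-channel reduction are routine, so I do not anticipate any further obstacle.
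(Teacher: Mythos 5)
Your overall plan is sound and closely mirrors the paper's: both arguments hinge on Hellinger tensorization, and both reduce to showing that a good single channel must exist. The paper works directly with $\beta_* := \inf_{\bT\in\cT}\haff(\bT p,\bT q)$, lower-bounds $\beta_*^{n_*-1}$ via the identical-channel sample complexity, and then shows $\prod_i\haff(\bT_i p,\bT_i q)\geq\beta_*^n$ is too large when $n\ll n_*$; you instead extract a good channel by pigeonhole on $\sum_i\hel^2(\bT_i p,\bT_i q)$ and run it identically. These are two packagings of the same idea, and yours is perfectly reasonable.

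However, there is a concrete error in the key step. You claim that tensorization together with $\haff(\bT_i p,\bT_i q)\leq\exp\bigl(-\tfrac12\hel^2(\bT_i p,\bT_i q)\bigr)$ yields $\sum_i\hel^2(\bT_i p,\bT_i q)\gtrsim 1$. That inequality is in the wrong direction for your purposes: combined with tensorization it gives $\prod_i\haff(\bT_i p,\bT_i q)\leq\exp\bigl(-\tfrac12\sum_i\hel^2(\bT_i p,\bT_i q)\bigr)$, which is an \emph{upper} bound on the product. You already have the upper bound $\prod_i\haff(\bT_i p,\bT_i q)\leq 0.595$ from the error guarantee, and stacking two upper bounds on the same quantity yields no control on $\sum_i\hel^2(\bT_i p,\bT_i q)$. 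What you actually need is a \emph{lower} bound on each $\haff_i$ in terms of $\hel^2_i$: for instance, $\haff_i = 1 - \tfrac12\hel^2_i \geq \exp\bigl(-\hel^2_i\bigr)$ whenever $\hel^2_i\leq 1$ (using $\log(1-y)\geq -2y$ for $y\leq\tfrac12$). This gives $0.595 \geq \prod_i\haff_i \geq \exp\bigl(-\sum_i\hel^2_i\bigr)$, hence $\sum_i\hel^2_i\geq\log(1/0.595)>0$. If instead some $\hel^2_i>1$, then that single term already makes the sum $\gtrsim 1$ and the pigeonhole is trivial. With this repair the rest of your argument (averaging to find $i^*$ with $\hel^2(\bT_{i^*}p,\bT_{i^*}q)\gtrsim 1/n$, then invoking \cref{fact:testing}) goes through.
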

\begin{proof}
Recall that we use $\haff(p,q)$ to denote the Hellinger affinity between $p$ and $q$.
It suffices to consider the case where $\nide(\{p,q\}, \bT)$ is larger than a fixed constant.
Define the following: 
\begin{align*}
 h_* = \sup_{\bT \in \cT}\hel(\bT p,\bT q), \quad \text{and} \quad \beta_* := \inf_{\bT \in \cT} \haff(\bT p,\bT q),
 \end{align*}
and note that $ \beta_* = 1 - 0.5h_*^2$.
Let $n_* := \nide(\{p,q\}, \bT)$.
Let $\tstar$ be any channel such that $\haff (\tstar p,\tstar q) \leq \beta_* + \epsilon \beta_*$, for some $\epsilon > 0$ satisfying $(1 + \epsilon)^{n_*} \leq 2$. 
Let $p_* = \tstar p$ and $q_* = \tstar q$.

\paragraph{Identical channels: Optimal $\tstar$}
If each channel is identically $\tstar$, the joint distributions of $n_*$ samples will either be $p_*^{\otimes n_*}$ or $q_*^{\otimes n_*}$.

Let $f(n) = \dtv\left(p_*^{\otimes n}, q_*^{\otimes n}\right)$.
Note that the probability of error for $ p_*^{\otimes n}$ and  $q_*^{\otimes n}$  is equal to  $1 - f(n)$ (cf.\ \cref{fact:testing}).
Since the sample complexity of $\cB(p_*, q_*)$ is at least $n_*$, the probability of error with $n_*-1$ samples must be greater than $0.1$, i.e., $f(n_*-1) < 0.9$.
Using \cref{fact:div}, we have 
$\hel^2\left(p_*^{\otimes (n_* - 1)},q_*^{\otimes (n_* - 1)}\right) \leq 1.8$, and consequently, $ \haff\left(p_*^{\otimes (n_* - 1)},q_*^{\otimes (n_* - 1)}\right) \geq 0.1$. Using the tensorization of Hellinger affinity (cf.\ \cref{fact:div}) and the relation between $\beta_*$ and $\haff(p_*,q_*)$, we have
\begin{align}
\label{eq:helAffinity}
(\beta_*)^{n_* - 1 } \geq \left(\frac{\haff(p^*,q^*)}{1 + \epsilon}\right)^{n_* - 1 } \geq \frac{1}{2} \haff\left(p_*^{\otimes (n_* - 1)},q_*^{\otimes (n_* - 1)}\right) \geq 0.05.\end{align}

\paragraph{Non-identical channels}

We now show that even if $n$ non-identical channels are allowed but $n \leq 0.01 n_*$, the probability of error is at least $0.2$.
For a choice of $\bT_1,\dots,\bT_n$, let $P'_n := \prod_{i=1}^n \bT_i p$ and $Q'_n := \prod_{i=1}^n \bT_i q$ be the resulting joint probability distributions under $p$ and $q$, respectively.
As the probability of error of the best test is $1 - \dtv\left( P'_n,  Q'_n \right)$ (cf.\ \cref{fact:testing}), it suffices to show that if $n \leq 0.01 n_*$, then $\dtv(P'_n , Q'_n) \leq 0.8$.

Using \cref{fact:div}, it suffices to show that $\hel^2(P'_n,Q'_n) \leq 0.64$.
Equivalently, it suffices to show that $\haff(P'_n,Q'_n) \geq 0.68$.
Using the tensorization of Hellinger affinity and optimality of $\beta_*$, we have
\begin{align*}
\haff(P'_n,Q'_n) &= \prod_{i=1}^n \haff(\bT_ip,\bT_iq) \geq \beta_*^n = (\beta_*^{n_* - 1})^{\frac{n}{n_* - 1}} \geq \left( (0.05)^{\frac{1}{50}} \right)^{\frac{50n}{n_*-1}} \geq  0.9^{\frac{50n}{n_*-1}},
\end{align*}
where we use inequality~\eqref{eq:helAffinity}.
Thus, if $n \leq 0.01n_*$, the Hellinger affinity is larger than $0.68$, implying that the total variation is small and the probability of error is large.
\end{proof}

We now derive the following lower bound on $\nnid(\{p,q\},\cT)$:
\begin{theorem}
\label{thm:BinHypTestLowerBound}
  There exist positive constants $c_1$ and $c_2$ such that for every $n_0 \in \N$ and $D \geq 2$, there exist (i) $k = \Theta(\log n_0)$ and (ii) two distributions $p$ and $q$ on $[k]$, such that the following hold:
  \begin{enumerate}
    \item $c_1 n_0 \leq \nstar(\{p,q\}) \leq c_2n_0$, and
    \item $\nnid(\{p,q\},\cT_D) \geq \frac{n_0\log(n_0)}{D}$.
   \end{enumerate}
\end{theorem}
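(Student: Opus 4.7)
The plan is to combine three pieces already established in the excerpt: (i) the tightness of the reverse data processing inequality for the Hellinger distance (\Cref{lem:HellTight}), (ii) Tsitsiklis's optimality of threshold tests (\Cref{thm:Tsitlikis}), and (iii) the equivalence between identical and non-identical channels (\Cref{lem:IdentVsNonIdent}). At a high level, \Cref{lem:HellTight} already constructs the hard instance; we only need to convert its Hellinger-contraction lower bound into a sample-complexity lower bound for $\nnid(p,q,\cT_D)$.

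First, I would choose the hard instance. Set $\rho := c/n_0$ for an absolute constant $c$ small enough that $\rho \in (0, c_1)$ (with $c_1$ as in \Cref{lem:HellTight}), handling small values of $n_0$ separately by absorbing into constants. Invoking \Cref{lem:HellTight} yields $k = \Theta(\log n_0)$ and distributions $p,q$ on $[k]$ with $\hel^2(p,q) = \Theta(\rho) = \Theta(1/n_0)$. By \Cref{fact:testing}(2), this gives $\nstar(p,q) = \Theta(1/\hel^2(p,q)) = \Theta(n_0)$, which establishes item~1 of the theorem.

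Next, I would reduce to lower bounding the sample complexity of identical threshold channels. By \Cref{thm:Tsitlikis}, $\nnid(p,q,\cT_D) = \nnid(p,q,\cTT_D)$, since threshold tests are optimal for non-identical channels. Then \Cref{lem:IdentVsNonIdent}, applied with the family $\cT = \cTT_D$, gives $\nnid(p,q,\cTT_D) = \Theta(\nide(p,q,\cTT_D))$. For an identical channel $\bT \in \cTT_D$, each user transmits an i.i.d.\ sample from $\bT p$ or $\bT q$, so the sample complexity equals $\Theta(1/\hel^2(\bT p, \bT q))$ by \Cref{fact:testing}(2). Therefore
\begin{equation*}
\nide(p,q,\cTT_D) = \Theta\Bigl(\inf_{\bT \in \cTT_D}\frac{1}{\hel^2(\bT p, \bT q)}\Bigr) = \Theta\Bigl(\frac{1}{\hel^2(p,q)} \cdot \inf_{\bT \in \cTT_D}\frac{\hel^2(p,q)}{\hel^2(\bT p, \bT q)}\Bigr).
\end{equation*}

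Finally, I would plug in \Cref{lem:HellTight}, which states that $\inf_{\bT \in \cTT_D}\hel^2(p,q)/\hel^2(\bT p,\bT q) \geq c_6 R/D$ with $R = \Theta(\log(1/\rho)) = \Theta(\log n_0)$. Combined with $\hel^2(p,q) = \Theta(1/n_0)$, this yields $\nide(p,q,\cTT_D) \gtrsim n_0 \log(n_0)/D$, and chaining back through the previous reductions gives $\nnid(p,q,\cT_D) \gtrsim n_0 \log(n_0)/D$, establishing item~2. The only real obstacle is bookkeeping: ensuring the constants from \Cref{lem:HellTight} survive the two reductions (one paying a multiplicative constant through \Cref{lem:IdentVsNonIdent}, the other incurring no loss by \Cref{thm:Tsitlikis}), and gracefully handling small $n_0$ where $\rho \in (0,c_1)$ may fail. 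All of this is routine once the reduction chain is laid out.
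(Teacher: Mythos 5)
Your proposal follows exactly the paper's proof: it constructs the hard instance from \Cref{lem:HellTight}, reduces via \Cref{thm:Tsitlikis} to threshold channels, then via \Cref{lem:IdentVsNonIdent} to identical channels, and finally applies \Cref{fact:testing} together with the contraction lower bound of \Cref{lem:HellTight}. The only cosmetic difference is your slightly more cautious choice $\rho = c/n_0$ versus the paper's $\rho = 1/n_0$; the paper also explicitly restricts to $D \le \log n_0$ before the reduction (handling larger $D$ via the trivial bound $\nnid \ge \nstar$), a bookkeeping step your write-up glosses over but which is needed to drop the constant from item~2.
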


\begin{proof}
We first provide a proof sketch. Using \Cref{lem:IdentVsNonIdent,thm:Tsitlikis}, it suffices to consider the setting with identical threshold channels.
With identical channels, say $\bT$, the problem reduces to that of $\cB(\bT p, \bT q)$, and thus to bounding $\hel(\bT p, \bT q)$, using \Cref{fact:testing}.  
Tightness of \Cref{lem:HellTight} then gives the desired result.

Turning to the details, note that it suffices to consider $D \leq \log n_0$. Moreover, we can consider the setting where $n_0$ is sufficiently large; the result for general $n_0$ then follows by changing the constants $c_1$ and $c_2$ appropriately.

Now define $\rho = 1/ n_0$. Since $n_0$ is large enough, this $\rho$ satisfies the condition of \cref{lem:HellTight}.
Let $p$, $q$, and $k = \Theta(\log(1/\rho))$ be from \cref{lem:HellTight}, such that (i) $\hel^2(p,q)= \Theta(\rho)$ and (ii) inequality~\eqref{eq:HellTight} holds.
Using \Cref{fact:testing}, we have the following:
\begin{align}
\label{eq:testinglowerbd}
\nstar(\{p,q\}) = \Theta \left(  \frac{1}{\hel^2(p,q)} \right)  = \Theta \left(\frac{1}{\rho}  \right)  = \Theta\left( n_0  \right).
\end{align}
Thus, $p$ and $q$ satisfy the first condition of the theorem.
Furthermore, we have the following (where $c'$ represents a positive constant which may change from line to line):
\begin{align*}
\nnid(\{p,q\},\cT_D) &= \nnid(\{p,q\},\cTT_D) && \left(  \text{{using \cref{thm:Tsitlikis}}}\right) \\
&\geq c' \nide(\{p,q\},\cTT_D) && \left( \text{using \cref{lem:IdentVsNonIdent}} \right)  \\
&\geq c' \inf_{\bT \in \cTT_D} \frac{1}{\hel^2(\bT p,\bT q)}  && \left( \text{using \cref{fact:testing}} \right) \\
&= \frac{c'}{\hel^2(p,q)} \inf_{\bT \in \cTT_D} \frac{\hel^2(p,q)}{\hel^2(\bT p, \bT q)} \\
&\geq \frac{c'}{\hel^2(p,q)} \frac{\log (1/\hel^2(p,q))}{D} &&\left( \text{using \cref{lem:HellTight}} \right) \\
&\geq c' \nstar(\{p,q\}) \frac{\log (c'' \nstar(\{p,q\}))}{D} &&\left( \text{using equation \eqref{eq:testinglowerbd}} \right) \\
&\geq n_0 \frac{\log (n_0)}{D} &&\left( \text{bounding $\nstar(\{p,q\})$} \right).
\end{align*}
This completes the proof.
\end{proof}

\subsection{Robust tests}
\label{sec:robust_tests_binary}

In this section, we study the robust version of $\cB(\{p,q\},\cT_D)$. Here, the data-generating distribution may not belong to $\cP:=\{p,q\}$, but is only guaranteed to lie within a certain radius of an element of $\cP$. 
Our main result (\Cref{thm:ub-Simple-D-robust}) shows that communication constraints increase the sample complexity of the robust version of hypothesis testing by at most logarithmic factors.

We begin by formally defining the robust version of simple hypothesis testing under communication constraints:
\begin{definition} [Robust version of $\cB(\{p,q\},\cT_D)$]
\label{def:robustHyp}
Let $\cP_1$ and $\cP_2$ be defined as $\cP_1 := \{\tilde{p}: \dtv(p,\tilde{p}) \leq \epsilon\}$ and  $\cP_2 := \{\tilde{q}: \dtv(q,\tilde{q}) \leq \epsilon\}$.
The robust version of $\cB(\{p,q\},\cT)$, denoted by $\cB_\robust(\{p,q\}, \epsilon, \cT)$, is defined as in \cref{def:simpleHypothesisTestComm}, but with $\cP = \cP_1 \cup \cP_2$. 
For a given test-rule pair $(\phi,\cR)$ with $\phi:\cup_{j=1}^ \infty \cY^j \to \cP$, we say that $(\phi,\cR)$ solves $\cB_\robust(\{p,q\}, \epsilon, \cT)$ with sample complexity $n$ if
\begin{align}
\label{eq:robust}
 \sup_{\tilde{p} \in \cP_1} \P_{(x_1,\dots,x_n) \sim \tilde{p}^{\otimes n}}( \phi(y_1,\dots,y_n) \ne p) +  \sup_{\tilde{q} \in \cP_2} \P_{(x_1,\dots,x_n) \sim \tilde{q}^{\otimes n}}( \phi(y_1,\dots,y_n) \ne q) \leq 0.1.
\end{align}
We use $\nstar_\robust(\{p,q\},\epsilon,\cT)$ to denote the sample complexity of this task, i.e., the smallest $n$ so that there exists a $(\phi,\cR)$-pair that solves $\cB_\robust(\{p,q\},\epsilon,\cT)$ with sample complexity $n$.
We use $\cB_\robust(\{p,q\},\epsilon)$ and $\nstar_\robust(\{p,q\}, \epsilon)$ to denote, respectively, the robust hypothesis testing problem and the sample complexity of robust testing in the absence of any channel constraints.
\end{definition}
For any two distributions $p$ and $q$ with $\dtv(p,q) = 3 \epsilon$, it is possible to obtain an $\epsilon$-robust test with sample complexity $O(1/\epsilon^2)$ (e.g., using Scheffe's test).
However, as the following example shows, the optimal communication-efficient channel for $\cB(\{p,q\},\cT_D)$ may not be robust to $\epsilon^{1 + \alpha}$-contamination for any $\alpha \in [0,1)$; we refer the reader to \Cref{app:hypothesis_testing} for more details.

\begin{example}[Optimal channel may not be robust]
\label{exm:robust-1}
 Let  $\alpha \in (0,1)$  and let $\epsilon > 0$ be small enough. Let $p \in \Delta_3$ be the distribution $\left(0.5- 3\epsilon- \epsilon^{1 + \alpha}, 0.5 + 3\epsilon, \epsilon^{1 + \alpha}\right)$, and let $q \in \Delta_3$ be the distribution $(0.5,0.5,0)$. 
Then $\dtv(p,q) \geq 3 \epsilon$ and $\nstar_\robust(\{p,q\}, \epsilon) = \Theta(1/\epsilon^{2})$. 
However, the optimal\footnote{The channel corresponds to the function $\I_{\{3\}}(x)$, and it transforms $p$ and $q$ to the distributions $(1 - \epsilon^{1 + \alpha}, \epsilon^{1 + \alpha})$
  and $(1,0)$, respectively.} channel $\bT^*$ for $\cB(\{p,q\},\cT_2)$ is not robust to $\epsilon^{1+\alpha}$-corruption: there exists $\tilde{p}$, satisfying $\dtv(p,\tilde{p}) \leq \epsilon^{1 + \alpha}$, such that $\bT^* \tilde{p} = \bT^* q$. 
\end{example}

As our main result, we show that there is an (efficient) way to choose channels such that the sample complexity increases by at most a logarithmic factor:

\begin{restatable}[Sample complexity of $\cB_\robust(\{p,q\},\cT_D)$]{theorem}{ThmRobustBinaryD}
\label{thm:ub-Simple-D-robust}
There exists a constant $c >0$ such that
for any $p, q \in \Delta_k$ with $\epsilon < \frac{\dtv(p,q)}{2}$ and any $D \geq 2$, we have
\begin{align}
\label{eq:ub-Simple-D-rob}
 \nstar_\robust(\{p,q\} , \epsilon, \cT_D) \leq c \cdot \nstar \cdot \max \left\{ 1, \frac{\min\{k, \log \nstar\}}{D} \right\},
 \end{align}
where $\nstar := \nstar_\robust(\{p,q\}, \epsilon)$.
Furthermore, there is an algorithm which, given $p$, $q$, $\epsilon$, and $D$, finds a channel $\bT^* \in \cTT_{D}$ in $\poly(k,D)$ time that achieves the rate in inequality~\eqref{eq:ub-Simple-D-rob}. 
\end{restatable}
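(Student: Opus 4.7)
The strategy is to reduce the robust communication-constrained problem to a non-robust communication-constrained problem between Huber's least favorable distributions (LFDs), and then apply the reverse data processing inequality (\Cref{cor:hellQuant}) to preserve their Hellinger distance.

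\textbf{Step 1 (Identify LFDs in the input space).} Invoke the Huber minimax framework to produce $p^* \in \cP_1$ and $q^* \in \cP_2$ minimizing $\hel^2(\tilde p, \tilde q)$ over $(\tilde p, \tilde q) \in \cP_1 \times \cP_2$. Under the standing assumption $\epsilon < \dtv(p,q)/2$, Huber's characterization gives $p^*/q^*$ as a clipped version of $p/q$ with $\hel^2(p^*, q^*) > 0$, and moreover the Huber saddle-point property holds: the errors of the LRT based on $(p^*, q^*)$ are maximized over $\cP_1 \times \cP_2$ at $(p^*, q^*)$. Standard Hellinger-testing facts (\cref{fact:testing}) give $\hel^2(p^*, q^*) = \Theta(1/\nstar)$ where $\nstar := \nstar_\robust(p,q,\epsilon)$.

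\textbf{Step 2 (Quantize while preserving Hellinger).} Apply \Cref{cor:hellQuant} to the pair $(p^*, q^*)$ to obtain, in $\poly(k,D)$ time, a channel $\bT^* \in \cTT_D$ satisfying
\begin{equation*}
\hel^2(\bT^* p^*, \bT^* q^*) \;\gtrsim\; \hel^2(p^*, q^*)\,/\,R, \qquad R := \max\bigl(1, \min(k, \log \nstar)/D\bigr).
\end{equation*}
Crucially, $\bT^*$ is a threshold channel on the likelihood ratio $p^*/q^*$.

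\textbf{Step 3 (Robustness of the quantized test).} Consider the composite procedure in which each user applies $\bT^*$ to its sample and the server runs the LRT between $\bT^* p^*$ and $\bT^* q^*$ on the messages. To argue this is $\epsilon$-robust, invoke the communication-constrained extension of Huber's framework due to Veeravalli--Basar--Poor. Because $\bT^*$ thresholds on $p^*/q^*$ and $p^*/q^*$ is itself a clipped version of $p/q$, the post-channel likelihood ratio $\bT^* p^*/\bT^* q^*$ inherits the bounded (clipped) structure of the original Huber LRT. One then verifies the saddle-point identity in the \emph{input} space: for every $(\tilde p, \tilde q) \in \cP_1 \times \cP_2$, the type-I and type-II errors of the composite test under $(\tilde p^{\otimes n}, \tilde q^{\otimes n})$ are bounded by those under $(p^{*\otimes n}, q^{*\otimes n})$. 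This uses the TV-non-expansion of $\bT^*$ to relate output-space deviations back to input-space deviations and then applies the original Huber argument to the clipped LR.

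\textbf{Step 4 (Sample complexity).} By Step 3 the test's worst-case error is controlled by its non-robust error between $\bT^* p^*$ and $\bT^* q^*$, which by \cref{fact:testing} requires $n = \Theta(1/\hel^2(\bT^* p^*, \bT^* q^*)) = O(\nstar \cdot R)$ samples; this matches inequality~\eqref{eq:ub-Simple-D-rob}. Efficiency of finding $\bT^*$ is inherited from \Cref{cor:hellQuant}, while $(p^*, q^*)$ can be computed by standard convex programming over the TV balls.

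The main obstacle will be Step 3. It is tempting — but incorrect — to argue that $(\bT^* p^*, \bT^* q^*)$ are Huber LFDs for TV-balls in the \emph{output} space; they are generally not, because $\dtv(\bT^* p^*, \bT^* p) \leq \epsilon$ is typically strict. The correct move is to keep the analysis in the input space, exploiting that $\bT^*$ is a threshold channel on the already-clipped ratio $p^*/q^*$ so that the saddle-point optimality of $(p^*, q^*)$ carries through the quantization.
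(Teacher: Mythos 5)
Your proposal is correct and follows essentially the same route as the paper: form Huber's least favorable distributions $(p^*,q^*)$ for the TV balls, apply \Cref{cor:hellQuant} to $(p^*,q^*)$ to obtain a threshold channel $\bT^*$ that preserves their squared Hellinger distance up to the factor $R$, invoke the Veeravalli--Basar--Poor extension of Huber's saddle-point theorem to conclude the composite test remains $\epsilon$-robust, and read off the sample complexity from \cref{fact:testing}. Your closing caveat — that one must \emph{not} try to identify $(\bT^* p^*, \bT^* q^*)$ as output-space LFDs — is a genuine and correct observation.

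One imprecision, worth flagging since you attempted to sketch the mechanism rather than merely cite it: the saddle-point property in Step 3 is not really a consequence of ``TV-non-expansion'' of $\bT^*$. Data-processing gives $\dtv(\bT^*\tilde p, \bT^* p)\le\epsilon$, but that alone does not bound the test's error under $\tilde p$ by its error under $p^*$. The working mechanism in Veeravalli--Basar--Poor (and in Huber's original proof) is \emph{stochastic dominance of the clipped likelihood ratio}: since $\bT^*$ is a monotone threshold of $p^*/q^*$, the post-channel LR statistic is a monotone function of the clipped LR, and among all $\tilde p\in\cP_1$ the LFD $p^*$ makes this statistic stochastically smallest (dually for $q^*$ in $\cP_2$). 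That is what pushes the type-I/II errors at $(p^*,q^*)$ to be the worst over $\cP_1\times\cP_2$. Since you do explicitly invoke the VBP theorem, the argument as a whole is sound; but the ``TV-non-expansion'' phrasing would not carry the proof on its own.
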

Note that the optimal channel in \Cref{thm:ub-Simple-D-robust} may depend on $\epsilon$.
Our proof critically uses the framework of least favorable distributions (LFDs) for binary hypothesis testing, pioneered by Huber~\cite{Huber65}.
LFDs are pairs of distributions $\tilde{p} \in \cP_1$ and $\tilde{q} \in \cP_2$ that maximize $\inf_{\phi} f (\phi, \tilde{p}, \tilde{q}, n )$, where $f (\phi, \tilde{p}, \tilde{q}, n )$ is the probability of error of a test $\phi$ which distinguishes $\tilde{p}$ and $\tilde{q}$ based on $n$ samples. Remarkably, LFDs do not depend on $n$ when $\cP_1$ and $\cP_2$ are $\epsilon$-balls around $p$ and $q$, respectively, in the total variation distance~\cite{Huber65,HubStr73}.
Moreover, these LFDs can be constructed algorithmically.
Particularly relevant for us is the result of
 Veeravalli et al.\ \cite{VeeBP94},
  who extended these results in the presence of communication constraints.
We achieve \Cref{thm:ub-Simple-D-robust} by applying \Cref{cor:hellQuant} to $\tilde{p}$ and $\tilde{q}$, the LFDs under $\epsilon$-contamination.

\begin{proof}
Consider the setting without any communication constraints.
Let $f(n, \phi, \tilde{p}, \tilde{q})$ be the probability of error by using the test $\phi$ on $n$ i.i.d.\ samples under $\tilde{p}$ and $\tilde{q}$.
Concretely, we define 
	\begin{align*}
	f(n, \phi, \tilde{p}, \tilde{q}) :=   \P_{(x_1,\dots,x_n) \sim \tilde{p}^{\otimes n}}( \phi(y_1,\dots,y_n) \ne p) +   \P_{(x_1,\dots,x_n) \sim \tilde{q}^{\otimes n}}( \phi(y_1,\dots,y_n) \ne q).
	\end{align*}
 Huber~\cite{Huber65} showed that when the corruption is in total variation distance, there exist $p_1\in\cP_1$ and $q_1 \in \cP_2$, called the least favorable distributions (LFDs), which maximize $ \inf_{\phi} f(n, \phi, \tilde{p}, \tilde{q})$ over $\tilde{p}$ and $\tilde{q}$ in $\cP_1$ and $\cP_2$, respectively.
Moreover, the optimal test is a likelihood ratio test, where the likelihoods are computed with respect to $p_1$ and $q_1$ (this corresponds to a clipped likelihood ratio test when likelihoods are computed with respect to $p$ and $q$). 
Thus, the robust sample complexity is $\nstar := \nstar_\robust(\{p,q\},\epsilon) = \Theta(1/\hel^2(p_1,q_1))$.

For the communication-constrained setting, Veeravalli, Basar, and Poor~\cite{VeeBP94}
showed that $p_1$ and $q_1$ above are also LFDs for $\cB_\robust(\{p,q\},\epsilon,\cT_D)$.
By applying \Cref{cor:hellQuant} to $p_1$ and $q_1$, we see that there exists a threshold channel $\bT$ such that $\hel^2(\bT p_1,\bT q_1) \geq \hel^2(p_1,q_1)$ up to a logarithmic factor.
Let $\phi^*$ be the likelihood ratio test between $\bT p_1$ and $\bT q_1$.
Applying Veeravalli, Basar, and Poor~\cite[Theorem 1]{VeeBP94}, we conclude that the probability of error for this test, for any $(\tilde{p}, \tilde{q}) \in \cP_0 \times \cP_1$, is less than the error on $(p_1,q_1)$.
Since the latter is less than $0.1$ when $n \geq n_0 =  \Theta(1/\hel^2(\bT p, \bT q))$ by \Cref{fact:testing}, the sample complexity is at most $n_0$.
As $\bT$ is computed using \Cref{cor:hellQuant} and $\nstar = \Theta(1/\hel^2(p_1,q_1))$, we have  $\nstar_\robust(\{p,q\},\epsilon,\cT_D) \leq \nstar \max\{1, \min\{k, \log \nstar\}/D\}$. 

Finally, we comment on the runtime of the algorithm. The LFDs can be calculated in polynomial time, as outlined in Huber~\cite{Huber65,HubStr73}, and given these LFDs, the optimal channel $\bT$ can again be computed in polynomial time, as mentioned in \Cref{cor:hellQuant}.
\end{proof}

As the following remark shows, the sample complexity of robust testing crucially depends on $\epsilon$:

\begin{remark}[Sample complexity without communication constraints] The sample complexity $\nstar_\robust(\{p,q\} , \epsilon')$ may have phase transitions with respect to $\epsilon'$.
For example, $\nstar_\robust(\{p,q\} , \epsilon')$ in \Cref{exm:robust-1} satisfies $\nstar_\robust(\{p,q\} , \epsilon^{1 + \beta}) = \Theta(1/ \epsilon^{1 + \alpha}) $ for $\beta > \alpha$ (small corruption) and $\Theta(1/ \epsilon^{2})$ for $\beta \in [0 , \alpha)$ (large corruption). See \Cref{exm:robust-2} for another instance. 
\end{remark}

It is instructive to compare the guarantees of \Cref{thm:ub-Simple-D-robust} with the sample complexity of Scheffe's test: \Cref{exm:robust-1,exm:robust-2} show that Scheffe's test may be strictly suboptimal in some regimes. 

Finally, we present the following result showing that  the channels in \Cref{thm:ub-Simple-D} are moderately robust:
\begin{restatable}[Optimal channels are moderately robust]{proposition}{PropRobustnessFree}
\label{prop:robustness-for-free} Let $p$ and $q$ be two distributions over $[k]$. Define $\epsilon_0 := c \dtv^2(p,q) \cdot \min \left\{1, \frac{D}{\log(1/\dtv(p,q))}\right\}$ for a small enough constant $c$.\footnote{This upper bound can be generalized to $\epsilon_0 := c \hel^2(\bT^*p,\bT^*q)$.}
Let $\bT^*$ be a channel that maximizes $\hel^2(\bT p, \bT q)$ over $\bT \in \cT_D$.
Let $\nstar_D$ be the sample complexity of $\bT^*$ for $p$ and $q$ (recall that $\nstar_D = \Theta(\nstar(\{p,q\},\cT_D))$). 
Let $\phi^*$ be the corresponding optimal test, which corresponds to a likelihood ratio test between $\bT^* p$ and $\bT^* q$. Then there exists a test $\phi'$ that uses $\bT^*$ for each user and solves $\cB_\robust(\{p,q\}, \epsilon_0, \cT_D)$ with sample complexity $\Theta(\nstar_D)$.
\end{restatable}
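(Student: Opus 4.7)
The plan is to keep the same channel $\bT^*$ and likelihood-ratio test $\phi^*$, just inflating the sample size to $n = C \nstar_D$ for a sufficiently large absolute constant $C$. If the true distribution is some $\tilde p$ with $\dtv(\tilde p, p) \leq \epsilon_0$, each user produces a message from $\bT^* \tilde p$ rather than from $\bT^* p$. By the data-processing inequality in \Cref{fact:div}, $\dtv(\bT^* \tilde p,\bT^* p) \leq \epsilon_0$, and by subadditivity of $\dtv$ over products,
\[
\dtv\!\left((\bT^* \tilde p)^{\otimes n},(\bT^* p)^{\otimes n}\right) \leq n \epsilon_0.
\]
A standard coupling argument then shows that the error probability of $\phi^*$ under $\tilde p^{\otimes n}$ exceeds its error probability under $p^{\otimes n}$ by at most $n\epsilon_0$, with the analogous statement on the $q$-side. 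So the task reduces to (i) making the base error of $\phi^*$ small, and (ii) bounding the slack $n\epsilon_0$.

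For (i), the definition of $\nstar_D$ and \Cref{fact:testing} guarantee that at $n = C\nstar_D$ the base error is below any prescribed constant, provided $C$ is large enough. For (ii), the aim is to show $\epsilon_0 \lesssim \hel^2(\bT^* p,\bT^* q)$, which is exactly the generalized form claimed in the footnote and directly gives $n\epsilon_0 = O(Cc)$ by \Cref{fact:testing}. The quantitative input is \Cref{cor:hellQuant}: since $\bT^*$ maximizes $\hel^2$ over $\cT_D$,
\[
\hel^2(\bT^* p,\bT^* q) \;\gtrsim\; \frac{\hel^2(p,q)}{\max\bigl(1,\,\log(4/\hel^2(p,q))/D\bigr)}.
\]
Using the sandwich $\dtv^2(p,q) \leq \hel^2(p,q) \leq 2\dtv(p,q)$ from \Cref{fact:div} to substitute $\dtv(p,q)$ for $\hel(p,q)$ both in the numerator and in the logarithm yields $\hel^2(\bT^* p,\bT^* q) \gtrsim \dtv^2(p,q)\cdot\min\bigl(1,\, D/\log(1/\dtv(p,q))\bigr)$, which is at least $\epsilon_0/c$ by the very definition of $\epsilon_0$. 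Choosing $c$ small enough relative to $C$ forces the two contributions in (i) and (ii) to sum to at most $0.1$, as required by \Cref{def:robustHyp}.

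I do not anticipate a serious obstacle. The only mildly delicate bookkeeping lies in the constant calibration of $C$ versus $c$, and the cosmetic observation that $\log(1/\hel^2(p,q)) \asymp \log(1/\dtv(p,q))$ whenever $\dtv(p,q)$ is bounded away from $1$ (otherwise the regime is trivial, since $\nstar_D$ is already a constant). Notably, the argument does not exploit any special structure of $\bT^*$ beyond its Hellinger-optimality; the weaker footnote bound $\epsilon_0 \lesssim \hel^2(\bT p,\bT q)$ in fact holds verbatim for \emph{any} channel $\bT$, with the sample complexity being $\Theta(1/\hel^2(\bT p,\bT q))$ rather than $\Theta(\nstar_D)$.
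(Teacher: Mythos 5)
Your proposal is correct and follows essentially the same route as the paper's proof: bound the total-variation perturbation $n\epsilon_0$ via subadditivity, use the sample-complexity bound from \Cref{fact:testing}/\Cref{fact:fail-prob-samp-complexity} to control the uncontaminated error, and invoke \Cref{cor:hellQuant} together with the Hellinger--TV sandwich in \Cref{fact:div} to verify $\epsilon_0 \lesssim \hel^2(\bT^*p,\bT^*q)$. The only cosmetic difference is that you apply the data processing inequality at the single-message level before tensorizing, whereas the paper bounds $\dtv(p^{\otimes n},\tilde p^{\otimes n})$ directly and then passes through the channel; both give the same $n\epsilon_0$ slack, and the paper additionally spells out the standard device of defining $\phi'$ to discard extra samples so the test is well defined for all $n$.
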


\Cref{prop:robustness-for-free} implies that optimal communication channels are already $\Theta(\epsilon^2)$-robust up to logarithmic factors. However, the result falls short of our desired goal of designing an $\Theta(\epsilon)$-robust test. (Informally, we say a channel is $\epsilon'$-robust if it can be used to perform hypothesis testing with reasonable sample complexity despite $\epsilon'$-corruption.)
 This guarantee is roughly the best possible, as can be seen by taking $\alpha \to 1$ in \cref{exm:robust-1}.

\begin{proof}
Let $\tilde{p}$ and $\tilde{q}$ be arbitrary distributions satisfying $\dtv(p,\tilde{p}) \leq \epsilon_0$ and $\dtv(q, \tilde{q}) \leq \epsilon_0$.
Suppose the following two conditions hold: (i) $n \epsilon_0 \leq 0.01$, and (ii) $n \geq C/\hel^2(\bT^*p, \bT^*q)$ for a large enough constant $C$.
We will first demonstrate the existence of a test that works under these two conditions. 

Sub-additivity of the total variation distance (cf.\ \Cref{fact:div}) implies that $\dtv(p^{\otimes n}, \tilde{p}^{\otimes n}) \leq n \epsilon_0$ and $\dtv(q^{\otimes n}, \tilde{q}^{\otimes n}) \leq n \epsilon_0$.
Under condition (i) above, the probability of each event $\cE$ over the output of $\phi$ is the same under $p^{\otimes n}$ (similarly, $q^{\otimes n}$) and $\tilde{p}^{\otimes n}$ (similarly, $ \tilde{q}^{\otimes n} $), up to an additive error of $0.01$.
Thus, if $(\phi^*,\cR)$ succeeds with probability $0.92$ for $p$ and $q$ with at most $n$ samples, they succeed with probability $0.9$ under $\tilde{p}$ and $\tilde{q}$.
The former condition holds under condition (ii), by \Cref{fact:fail-prob-samp-complexity}.
Thus, when both of these conditions hold simultaneously, the probability of error of $(\phi^*,\cR)$ under $\tilde{p}$ and $\tilde{q}$ is at most $ 0.1$.
These two conditions on $n$ are satisfied if $n = n_0 := C/\hel^2(\bT^*, \bT^* q)$ and $\epsilon_0 \leq 0.01/ n_0$.

We now define $\phi': \cup_{n=1}^ \infty \cY^{n} \to \{p,q\}$, as follows: If $n < n_0$, define $\phi'$ arbitrarily; otherwise, discard $n-n_0$ samples\footnote{A more efficient strategy is to divide the samples into $\lfloor n/n_0 \rfloor$ buckets of size $n_0$ (discarding samples if necessary), apply $\phi^*$ on each of those buckets individually, and then output the median.} and define $\phi'(y_1,\dots,y_{n})$
  to be $\phi^*(y_1,\dots,y_{n_0})$. 
By our previous calculations, it follows that $(\phi',\cR)$ solves $\cB_\robust(\{p,q\}, \epsilon)$ as long as $\epsilon \leq \epsilon_0 := 0.01/ n_0 = \Theta(1/\hel^2(\bT^* p, \bT^* q))$. 
By \Cref{cor:hellQuant}, we have $\hel^2(\bT^* p, \bT^* q) \geq c \hel^2(p,q)/ \log(1/\hel^2(p,q))$.
Applying \Cref{fact:div}, we obtain the desired result.
\end{proof}

\section{Simple $M$-ary hypothesis testing} %
\label{sec:m-ary}

In this section, we study the $M$-wise simple hypothesis testing problem, i.e., $\cP$ is a set of $M \geq 2$ distributions. Our focus in this section will be slightly different from that of \cref{sec:BinHypTest} in the following ways: (i) in addition to the choices of identical or non-identical channels, we will also allow channels to be selected adaptively; and (ii) our primary focus will be on studying the effect of $M$, the number of hypotheses, instead of the pairwise distance, i.e., $\min_{p,q \in \cP: p\neq q } \hel(p,q)$.

\begin{definition}[Sequentially adaptive channels]
Let $\cX$ be the domain, $\cP$ a family of distributions over $\cX$, and $\cT$ a family of channels from $\cX$ to $\cY$.
Let  $(U_1,\dots,U_n)$ denote $n$ (ordered) users. Each user $U_i$ observes a random variable $X_i$ i.i.d.\ from an (unknown) $p' \in \cP$.
The observations are then released sequentially, as follows:
for each time $i \in [n]$,
user $U_i$ first selects a channel $\bT_i \in \cT$ based on $X_i$ (personal sample) and $(Y_1,\dots,Y_{i-1})$ (public knowledge up to now), generates $Y_i = \bT_i(X_i)$, and finally releases $Y_i$ to everyone. 
The central server $U_0$ observes $Y_1,\dots,Y_n$ and constructs an estimate $\widehat{p} = \phi(Y_1,\dots,Y_n)$.
Both the hypothesis testing task and its sample complexity are defined analogously to \Cref{def:simpleHypothesisTestComm,def:sampleComplexityComm}. When $\cT$ is the set of channels that map to $D$ alphabets, i.e., $\cT = \cT_D$, we denote the sample complexity by $\nada(\cP,\cT_D)$.
\end{definition}

\subsection{Upper bounds}
In the following result, we show using a standard argument that we can use a communication-efficient binary test from \cref{thm:ub-Simple-D} as a subroutine to solve the $M$-wise hypothesis testing problem.

\begin{restatable}[Upper bounds using threshold tests]{proposition}{PropUppBoundsMary} 
\label{prop:upper-bounds-M-ary}
Let $\cP$ be a set of $M$ distributions in $\Delta_{k}$ such that $ \rho = \min_{p,q \in \cP: p \neq q} \hel(p,q)$. 
Let $k' = \log(1/\rho)$ and define the blow-up factor $R := \frac{\min\{k,\log(1/\rho)\}}{D} + 1$. Then the sample complexity of the simple $M$-ary hypothesis testing problem satisfies the bounds
\begin{enumerate}
  \item  $\nnid(\cP,\cT_D) \lesssim \frac{M^2 \log M}{\rho^2} \cdot R$,
  \item  $\nada(\cP,\cT_D) \lesssim  \frac{M \log M}{\rho^2} \cdot R $.

  \end{enumerate}
\end{restatable}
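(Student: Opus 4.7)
}
The plan is to reduce $M$-ary testing to $\binom{M}{2}$ or $M-1$ invocations of the binary test from \Cref{thm:ub-Simple-D}, and then aggregate via a tournament. The key quantitative input is that, for any pair $p_i \neq p_j$ in $\cP$, \Cref{thm:ub-Simple-D} (together with a standard probability-amplification step that repeats the test $O(\log(1/\delta))$ times) yields a rule using an identical channel from $\cT_D$ and having error probability at most $\delta$ with sample complexity
\[
n_{\text{bin}}(i,j,\delta)\;\lesssim\;\frac{\log(1/\delta)}{\hel^2(p_i,p_j)}\cdot R
\;\le\;\frac{\log(1/\delta)}{\rho^2}\cdot R.
\]
I will invoke this with $\delta \asymp 1/M$ throughout.

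\medskip
\noindent\textbf{Part 1 (non-identical channels).} I would partition the users into $\binom{M}{2}$ disjoint groups $\{S_{ij}\}_{i<j}$, assign to every user in $S_{ij}$ the channel $\bT_{ij}\in\cT_D$ from \Cref{thm:ub-Simple-D} optimized for the pair $(p_i,p_j)$, and take $|S_{ij}|=n_{\text{bin}}(i,j,1/(10M))\lesssim (\log M/\rho^2)\cdot R$. The central server inspects the outputs in each $S_{ij}$ and runs the corresponding optimal binary test to obtain a winner $w_{ij}\in\{p_i,p_j\}$; it then outputs any hypothesis in $\cP$ that wins all of its $M-1$ pairwise matches (the tournament rule). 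If $p^\ast=p_k$ is the true distribution, each of the $M-1$ tests involving $p_k$ succeeds with probability at least $1-1/(10M)$, so by a union bound $p_k$ wins every one of its matches with probability $\geq 0.9$; any other $p_j$ then loses at least the match against $p_k$, so $p_k$ is the unique hypothesis output. The total sample complexity is $\sum_{i<j}|S_{ij}|\lesssim M^2\log M/\rho^2 \cdot R$.

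\medskip
\noindent\textbf{Part 2 (adaptive channels).} Here I would run a sequential knockout. Initialize $\widehat{p}\leftarrow p_1$, and for $i=2,\dots,M$, allocate a fresh block of $n_i\lesssim (\log M/\rho^2)\cdot R$ users whose channels implement the binary test of \Cref{thm:ub-Simple-D} for the pair $(\widehat{p},p_i)$; crucially, $\widehat{p}$ is determined by the messages released in earlier rounds, and since each user can see those prior messages before choosing its channel, this selection is legal under the sequentially adaptive model. Replace $\widehat{p}$ by the winner of that binary test and proceed. For analysis, let $p^\ast=p_k$. The rounds with $i<k$ compare two distributions neither of which is $p^\ast$ and can be ignored; at round $i=k$ we compare $\widehat{p}\neq p^\ast$ against $p^\ast$, and at each round $i>k$ we (inductively) compare $\widehat{p}=p^\ast$ against $p_i\neq p^\ast$. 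Each of these at most $M$ binary tests fails with probability $\leq 1/(10M)$, so a union bound gives overall error $\leq 0.1$. The total sample complexity is $\sum_i n_i\lesssim M\log M/\rho^2\cdot R$.

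\medskip
\noindent\textbf{Main obstacles.} The arguments are essentially bookkeeping on top of \Cref{thm:ub-Simple-D}; the only subtlety I want to be careful about is the adaptive part, where one must verify that the binary test of \Cref{thm:ub-Simple-D} (which is stated for identical, non-adaptive channels) is indeed simulatable inside a round of the knockout tournament once $\widehat{p}$ is public. This follows because, within a single round, the new block of users all observe the same public history and can therefore all adopt the single identical channel that \Cref{thm:ub-Simple-D} prescribes for the current pair $(\widehat{p},p_i)$; no cross-round coordination is required. The boosting factor of $\log M$ to drive per-test error down to $1/(10M)$ can be absorbed into the constants.
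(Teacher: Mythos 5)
Your proposal is correct and follows essentially the same route as the paper's own proof (Appendix~D.1): a full round-robin tournament over all $\binom{M}{2}$ pairs with disjoint blocks of users and per-pair channels from \Cref{thm:ub-Simple-D} (equivalently \Cref{cor:hellQuant}), per-test failure probability driven to $O(1/M)$ at a $\log M$ cost, a union bound over the $M-1$ matches of the true hypothesis, and, for the adaptive case, the same knockout reduction from $M^2$ to $M$ games. The only cosmetic difference is that you boost the failure probability by repetition and a median, whereas the paper cites the dependence on $\delta$ for the optimal likelihood-ratio test directly (\Cref{fact:fail-prob-samp-complexity}); both are standard and equivalent.
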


The proof of \Cref{prop:upper-bounds-M-ary}, given below, proceeds by analyzing a standard tournament procedure, which we now briefly describe.
We think of each hypothesis as a player, and each hypothesis test between any two distributions (players) as a game. 
 The tournament procedure decides the fixtures of the games (which distributions will play against each other) and the overall winner of the tournament (the hypothesis that will be returned) based on the results of individual games. 
 It is easy to see that the true distribution $p \in \cP$ will never lose a game against any of the competitors, with high probability. 
 Thus, as long as we have a unique player who has not lost a single game, we can confidently choose it to be the winner. 
 An obvious strategy is to organize all pairwise $\Theta(M^2)$ tests and output the player who never loses a game.
 Sans communication constraints, each game (hypothesis test) can be played with the same set of samples, and since the failure probability is exponentially small, it suffices to take $O(\log M)$ samples so that the results of all the games involving player $p$ are correct. 
However, under communication constraints, we observe the samples only
after they have passed through a channel. Furthermore, the channel that would ideally be employed for the game between $p$ and $q$ crucially relies on $p$ and $q$, and it is unclear if the same channel provides useful information for the game between $p$ and another player $q'$.
We can circumvent this obstacle by using a new channel and a fresh set of samples for each game, which guarantees correctness after taking $O(M^2 \log M)$ samples.
Recall that the choice of channels was non-adaptive here---when the channels can be adaptive, we can reduce the number of games (and thus the sample complexity) by organizing a ``knock-out'' style tournament of $M-1$ games, where each losing player is discarded from the tournament.  

\begin{proof}[Proof of \Cref{prop:upper-bounds-M-ary}]
Let $\cP =\left\{p^{(1)},\dots,p^{(M)}\right\}$.
We first prove the results for non-adaptive, non-identical channels.
Denote the set $\cS = \{\{i,j\}: i \neq j, i \in [M], j \in [M]\}$.
For each $\{i,j\} \in \cS$, let $\bT_{\{i,j\}} \in \cTT_{D}$ be the channel achieving the guarantee in \cref{cor:hellQuant}.
Since $\hel^2(p^{(i)},p^{(j)}) \geq \rho^2$, \cref{cor:hellQuant} states that $\hel^2(\bT_{\{i,j\}}p^{(i)}, \bT_{\{i,j\}}p^{(j)}) \geq \rho^2/R$.
Let $m = (CR \log M)/ \rho^2$, for a large enough constant $C>0$ to be decided later.

Fix any ordering $\sigma(\cdot)$ of the set $\cS$.
Consider the strategy where we take a total of $0.5M(M -1)m$ users, such that the $r^{\text{th}}$ user uses the channel $\bT_{\sigma(\lceil r/m \rceil)}$, i.e.,  each channel is repeated $m$ times in a predetermined order.
For any $\{i,j\} \in \cS$, let $\cA_{\{i,j\}}$ denote the set of samples observed by the central server after passing through $\bT_{\{i,j\}}$.

We now describe the strategy at the central server: For any $\{i,j\} \in \cS$, consider the optimal test $\psi_{\{i,j\}}$ between $\left\{\bT_{\{i,j\}}p_i, \bT_{\{i,j\}}p_j\right\}$ that uses the samples $\cA_{\{i,j\}}$ and maps to either $\{i\}$ or $\{j\}$.
We say this is a game between $i$ and $j$, and call $\psi_{\{i,j\}}\left(\cA_{\{i,j\}}\right)$ the winner of the game.
The central server outputs the unique hypothesis that wins all of its games against other hypotheses, i.e., the unique element in the set $\{i: \forall j \neq i, \psi_{\{i,j\}}\left(\cA_{\{i,j\}}\right) = i\}$.

Let $i\in [M]$ be the unknown true hypothesis. It suffices to show that $i$ never loses a game against any other hypothesis.
For any $j \neq i$, we have $\hel^2(\bT_{\{i,j\}} p^{(i)},\bT_{\{i,j\}} p^{(j)}) \geq \rho^2/R$. Thus, we have  $\P(\psi(\cA_{\{i,j\}}) \neq i) \leq 0.01/M^2$ by \Cref{fact:fail-prob-samp-complexity}, since $C$ is large enough. Taking a union bound over all $j \neq i$, we see that the probability of error is less than $0.01/M$. Taking the sum, we see that the sum of the probabilities of errors satisfies condition \eqref{eq:DefErrorProbCommCons}. 
Thus, we obtain $\nnid(\cP,\cT_D) \lesssim M^2 m \lesssim \frac{M^2 \log M}{\rho^2} \cdot R$.

We now turn our attention to the adaptive setting. Consider the following strategy:
\begin{enumerate}
  \item Set $j = 2$ and $\widehat{i}= 1$.
  \item While $j \leq M$:
  \begin{enumerate}
    \item $m$ users choose $\bT_{\{j, \widehat{i}\}}$.
    \item Let $\cA_{\{j, \widehat{i}\}}$ be the set of $m$ observed samples.
    \item Assign $\widehat{i} \gets \psi_{j,\widehat{i}}(\cA_{\{j, \widehat{i}\}})$ and $j \gets j +1$.
  \end{enumerate}
  \item Output $\widehat{i}$.
 \end{enumerate}
First, it is easy to see that the procedure terminates after taking $Mm$ samples.
Turning to the correctness of the algorithm, let $i^*$ be the true unknown probability distribution. It suffices to show that $i^*$ never loses a game against any other $j$. The same arguments as above show that this does not happen.
\end{proof}

\begin{remark}[Dependence on $M$ and $\rho$]
\label{rem:AdapUppBound}
We now comment on the dependence of these bounds on $M$ and $\rho$. In particular, note that:
\begin{enumerate}
\item As shown later in \cref{thm:Adaptive-lb}, the dependence on $M$ is nearly tight (up to logarithmic factors) for the case of adaptive algorithms (for constant $\rho$ and $D)$.
\item For non-adaptive algorithms and $D=2$, \Cref{thm:M-ary-Identical-lb} shows a lower bound of $\Omega(M^2)$ for the case of identical channels.
\item The dependence on $\rho$ is tight for constant $M$ (\Cref{thm:BinHypTestLowerBound}).
\end{enumerate}
\end{remark}

\begin{remark}[Robust $M$-ary hypothesis testing]\label{rem:Robust} One can also consider a robust version of simple $M$-ary hypothesis testing, which is often called hypothesis selection, i.e., the true distribution $p$ satisfies $\min_{q \in \cP}\dtv(p,q) \leq \epsilon$ (analogous to \Cref{def:robustHyp}). 
One can use the robust test for binary hypothesis testing from \Cref{sec:robust_tests_binary} to obtain a similar dependence on $M$ as in \Cref{prop:upper-bounds-M-ary} under this setting.
\end{remark}

As the dependence on $M$ is nearly tight for adaptive channels (for constant $\rho$), we now shift our attention to procedures that use identical channels, which might be desirable in certain practical situations.
We establish the following bound for identical channels:

\begin{theorem}[Upper bounds with identical channels]
\label{thm:M-ary-Ident-Chann-ub}
Let $\cP$  be a set of  $M$ distributions in $\Delta_{k}$ satisfying $\min_{p, q \in \cP: p \neq q} \dtv(p,q)> \epsilon$. Then
\begin{align}
\label{eq:MaryIdentChan-ub1}
\nide(\cP, \cT_D) \lesssim \frac{D \log M}{\epsilon^2} \min \left\{\log(DM^2) M^{6+\frac{4}{D-1}},    kM^{\frac{4}{D-1}} \log(Dk)      \right\}.
\end{align}
In particular, for $D = \Omega (\log(M))$, we have
\begin{align}
\label{eq:MaryIdentChan-ub2}
\nide(\cP, \cT_D) \lesssim \frac{\log^2M}{\epsilon^2} \min\left\{ M^6 , k\log k \right\}
\end{align}
(since $\nide(\cP, \cT_D)$ decreases in $D$). Furthermore, for any $p$ and $q$, the channel achieving the rates in inequalities~\eqref{eq:MaryIdentChan-ub1} and \eqref{eq:MaryIdentChan-ub2} can be found efficiently using a linear program of polynomial size.

\end{theorem}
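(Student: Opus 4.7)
The strategy is to find an identical channel $\bT \in \cT_D$ that preserves the pairwise Hellinger separation of $\cP$. Since $\dtv(p,q) \geq \epsilon$ implies $\hel^2(p,q) \geq \epsilon^2$ by \cref{fact:div}, and \cref{fact:testing}(3) applied to $\{\bT p : p \in \cP\}$ yields $\nide(\cP, \cT_D) \lesssim \log M / \rho'^2$ with $\rho'^2 := \min_{p \neq q \in \cP} \hel^2(\bT p, \bT q)$, it suffices to exhibit $\bT \in \cT_D$ satisfying $\rho'^2 \gtrsim \epsilon^2 / B$, where $B$ is (up to the extra factor of $D$ from the statement) the claimed blow-up factor.

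The search over $\bT$ can be cast as a linear program: a channel is a column-stochastic matrix in $\R^{D \times k}$ ($kD$ variables), and for each pair $(i,j)$ the condition $\dtv(\bT p_i, \bT p_j) \geq t$ is the disjunction over $A \subseteq [D]$ of the linear inequalities $\sum_{y \in A}(\bT p_i - \bT p_j)(y) \geq t$. Enumerating the $2^D$ subsets gives an LP with $\binom{M}{2} 2^D$ constraints, of polynomial size whenever $D = O(\log M)$; this establishes efficient computability. (For $D = \Omega(\log M)$, the bound \eqref{eq:MaryIdentChan-ub2} follows from monotonicity of $\nide(\cP,\cT_D)$ in $D$ combined with \eqref{eq:MaryIdentChan-ub1}.)

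To prove feasibility I would give two probabilistic constructions, one for each term in the min. For the $kM^{4/(D-1)}\log(Dk)$ bound, partition $[k]$ into $D$ buckets by thresholding a random scalar summary of each alphabet symbol, e.g., a random signed sum of the $\binom{M}{2}$ log-ratios $\log(p_i(x)/p_j(x))$ followed by dyadic thresholds. Concentration bounds on each projected pair, unioned over $\binom{M}{2}$ pairs and the $D-1$ threshold locations, should yield a realization preserving $\dtv$ for every pair up to the desired factor; the exponent $4/(D-1)$ arises by balancing the $\binom{M}{2}^{2/(D-1)}$ log-ratio scales among the available $D-1$ thresholds, then squaring when converting TV preservation into a bound on $\hel^2$. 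For the $\log(DM^2) M^{6+4/(D-1)}$ bound, I would first reduce the effective alphabet size by clustering the symbols of $[k]$ sharing the same rounded profile vector $(p_1(x),\ldots,p_M(x))$ on a grid of resolution $\poly(1/M)$, verifying that each pairwise TV is preserved up to constants (since the Scheffe witness of any pair is stable under mass perturbations $\ll \epsilon$), and thereby reducing to $k = \poly(M)$; then applying the first construction with this reduced alphabet delivers the stated bound.

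The principal obstacle is a sharp accounting that yields exactly the exponent $M^{4/(D-1)}$: the concentration tails, the union-bound cardinalities, and the dyadic grid of thresholds must be balanced carefully, and the probabilistic argument must then be derandomized (or combined with the LP) into a constructive certificate. A secondary difficulty in the $M^{6+4/(D-1)}$ bound is ensuring that the preliminary alphabet reduction does not collapse the Scheffe witness of any pair, which requires tracking all $\binom{M}{2}$ pairs simultaneously despite the fact that different pairs may have disjoint witness sets.
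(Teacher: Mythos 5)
Your high-level framing is right: reduce to maximizing $\min_{i \neq j} \dtv(\bT p^{(i)}, \bT p^{(j)})$ over $\bT \in \cT_D$ and then apply \cref{fact:testing} to the pushforwards. But the two constructions you sketch diverge from the paper's proof in ways that leave genuine gaps.

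For the second term in the min, the paper proves \cref{prop:JLMainBody} by taking $\bH = \frac{1}{Q_1}\left(\bJ + \frac{\bG}{Q_2}\right)$, a \emph{$(D-1)$-dimensional} Gaussian perturbation of the all-ones matrix, so $\bH a = \frac{1}{Q_1Q_2}\bG a$ for any mean-zero $a$ and $\|\bH a\|_2^2$ is a scaled $\chi^2_{D-1}$. The factor $M^{-2/(D-1)}$ then falls directly out of the lower tail $\P(\chi^2_{D'} \le t) \le (et/D')^{D'/2}$ unioned over $\binom{M}{2}$ differences. Your proposal instead reduces every symbol $x$ to a single random scalar $s(x)$ (a signed sum of $\binom{M}{2}$ log-ratios) and then applies $D-1$ dyadic thresholds. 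This is a \emph{one-dimensional} embedding followed by quantization, which is far weaker than a $(D-1)$-dimensional random linear map. With a single global ordering of $[k]$, the threshold cells cannot simultaneously align with the Scheffe witness sets of all $\binom{M}{2}$ pairs; there is no reason a random signed sum of log-ratios preserves $|\bT(p_i-p_j)|_1$ for every pair, and log-ratios are not even defined where some $p_i(x)=0$. The chi-square lower-tail mechanism that gives the exponent $M^{-2/(D-1)}$ in the paper has no analogue in the 1-D scheme.

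For the first term in the min, the paper's alphabet reduction (\cref{claim:IniChannelMSq}) is deterministic and elementary: form a $\binom{M}{2} \times k$ matrix whose row indexed by $(i,j)$ is the Scheffe indicator $\ell \mapsto \I_{p^{(i)}(\ell) > p^{(j)}(\ell)}$, then normalize columns to make it stochastic; the normalization costs only a factor $\binom{M}{2}$, giving the $M^2$ blowup, and by construction the projected TV never collapses for any pair. Your grid-based clustering of the profile vectors $(p_1(x),\dots,p_M(x))$ at resolution $\poly(1/M)$ does not reduce to $M^2$ cells — generically it produces $\poly(M)^{M}$ cells — and you correctly flag that it risks collapsing Scheffe witnesses. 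That concern is real, and the paper sidesteps it entirely by encoding the witnesses directly into the channel rows.

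Finally, your LP formulation is not an LP. The condition $\dtv(\bT p_i,\bT p_j)\ge t$ is $\max_{A\subseteq[D]}(\bT p_i - \bT p_j)(A)\ge t$, which is a disjunction; adding all $2^D$ inequalities as constraints would demand they hold \emph{simultaneously} (a conjunction), which is the wrong thing. A correct linearization requires guessing a sign pattern $\sigma_{ij}\in\{\pm 1\}^D$ for each pair, and there are $2^{D\binom{M}{2}}$ such choices, so enumeration does not give a polynomial-size LP. The paper's efficiency claim should be read as following from the explicit polynomial-time constructions (the Scheffe matrix, and the Gaussian channel by sampling and checking), not from a direct LP over all channels.
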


\begin{remark}
If public randomness is available to all users and the central server, it is possible to simulate the protocol with non-identical channels from Proposition~\ref{prop:upper-bounds-M-ary} using identical channels, with more samples. To do this, each user chooses a channel from the non-identical channel setting uniformly at random (using public randomness). Since the central server knows what channels are being picked, it is able to solve hypothesis testing when all channels from the non-identical setting have been picked. For this to happen, a coupon-collector argument shows that it suffices to have $\asymp \nnid (\cP, \cT_D) \log \nnid(\cP, \cT_D)$ many users. The resulting upper bound will be an improvement over the one presented above in certain regimes using \Cref{prop:upper-bounds-M-ary}.
\end{remark}

\begin{proof}
Let $\cP = \left\{p^{(1)},\dots, p^{(M)}\right\}$.
We prove the bound~\eqref{eq:MaryIdentChan-ub1} by reducing the problem to a (decentralized) testing problem between distributions $\cP'=\left\{ q^{(1)},\dots,q^{(M)} \right\}$,
 where $q^{(i)} \in \Delta_D$ and $q^{(i)} = \bT p^{(i)}$, for some $\bT \in \cT_D$.
Defining $\epsilon' = \min_{i \neq j} \dtv\left(\bT q^{(i)}, \bT q^{(j)}\right)$, \cref{fact:testing} shows the existence of an algorithm with sample complexity $ O\left(\frac{\log M}{\epsilon'^2}\right)$.
Thus, the goal is to find a channel $\bT$ that maximizes $\min_{i \neq j} \dtv\left(\bT p^{(i)}, \bT p^{(j)}\right)$, leading to the linear program
\begin{align}
\label{eq:M-ary-LP}
\max_{\bT \in \cT_D} \min_{i \neq j} \dtv(\bT p^{(i)}, \bT p^{(j)}).
\end{align}
Let $\opt$ be the value of the maximum in expression~\eqref{eq:M-ary-LP}. The overall sample complexity of the algorithm is then $O\left(\frac{\log M}{\opt^2}\right)$.
We now prove each of the two bounds in inequality~\eqref{eq:MaryIdentChan-ub1} by lower-bounding $\opt$ in two different ways.

\paragraph{Bound II} %
\label{par:bound_ii}
We first prove the second bound in inequality~\eqref{eq:MaryIdentChan-ub1} of \Cref{thm:M-ary-Ident-Chann-ub}. 
The following result, proved in \Cref{app:upper_bounds_for_identical_channels}, provides a lower bound on the quantity \eqref{eq:M-ary-LP} by using a Johnson-Lindenstrauss (JL) type of sketch:
\begin{restatable}[JL-sketch]{lemma}{JLSkecth}
\label{prop:JLMainBody} There exists a constant $c > 0$ such that the following holds:
Let $\left\{p^{(1)},\dots,p^{(M)}\right\} \subseteq \Delta_k$ be $M$ distributions such that $\min_{i \neq j} \dtv(p^{(i)},p^{(j)}) > \epsilon$. Then
\begin{align*} 
\max_{\bT \in \cT_D} \min_{i \neq j} \dtv(\bT p^{(i)}, \bT p^{(j)}) \geq c \cdot \frac{\epsilon}{\sqrt{k} M^{\frac{2}{D-1}} \sqrt{D \log(Dk)} }.
\end{align*}
\end{restatable}

\paragraph{Bound I.} %
We note that the first bound in inequality~\eqref{eq:MaryIdentChan-ub1} is better than the second bound when $k \gg M$. 
Our strategy will be to reduce the problem from a domain with $k$ elements to a domain of (potentially) smaller size.
\begin{claim}[Reduction to a domain of size $M^2$]
\label{claim:IniChannelMSq} Let $\cP = \left\{p^{(1)},\dots,p^{(M)}\right\}$ and consider the setting of \cref{thm:M-ary-Ident-Chann-ub}.
There exists a channel $\bT: [k] \to [M^2]$ such that for all $1 \leq i < j \leq M$, 
we have $\dtv\left(\bT p^{(i)}, \bT  p^{(j)}\right) \geq \frac{1}{M^2} \cdot \dtv \left(p^{(i)},  p^{(j)}\right)$.
\end{claim}
\begin{proof} 
Note that the result holds trivially for $k \leq M^2$, so assume that $k > M^2$.
Let $d = {M \choose 2}$.
For two distributions $p$ and $q$, we have $\dtv(p,q) = \frac{1}{2} \|p -q\|_1$. 
Thus, we will show the existence of a column-stochastic matrix $\bT \in \R^{d \times k}$, i.e.,
each entry of $\bT$ is non-negative and the sum of each column is $1$, 
 satisfying the following conclusion when interpreted as an inequality concerning matrices and vectors:
$\left\|\bT (p^{(i)} - p^{(j)})\right\|_1 \geq \frac{1}{M^2}\cdot\left\|p^{(i)} - p^{(j)}\right\|_1$.

We will index rows by $(i,j)$, for $1 \leq i < j \leq M$.
We first define a matrix $\bT' \in \real^{d \times k}$,
such that the $(i,j)^{\text{th}}$ row is the vector $z'_{(i,j)} \in \R^{k}$ with $\ell^{\text{th}}$ entry equal to $\I_{p^{(i)}(\ell) > p^{(j)}(\ell)}$.
It is easy to see that
\begin{align}
\label{eq:ChSmallDomain}
\left\|\bT'p^{(i)} - \bT'p^{(j)}\right\|_1 = \left|\left\langle z'_{(i,j)}, p^{(i)} - p^{(j)}\right \rangle\right| = \frac{1}{2} \left\|p^{(i)} - p^{(j)}\right\|_1.
\end{align}
We now construct $\bT$ by transforming $\bT'$ into a column-stochastic matrix by dividing each column by the sum of its entries.
Let $\left\{z_{(i,j)}\right\}$ denote the rows of $\bT$.
As each entry of $\bT'$ is at most 1 and the number of rows is $d$, each entry of $\bT$ is at least $\frac{1}{d}$ times the corresponding entry of $\bT'$, i.e., $z_{i,j} \geq \frac{z'_{i,j}}{d}$, interpreted as an entrywise inequality.

Thus, for any $1 \leq i < j \leq M$, we have
\begin{align*}
\left\|\bT p^{(i)} - \bT p^{(j)}\right\|_1  \geq \left|\left\langle z_{i,j}, p^{(i)} - p^{(j)}\right\rangle\right | \geq \frac{1}{d} \left|\left\langle z'_{(i,j)}, p^{(i)} - p^{(j)}\right \rangle\right|,
\end{align*}
noting that each entry in the sum $\langle z_{i,j} , p^{(i)} - p^{(j)} \rangle$ is nonnegative by construction. %
Combining with inequality~\eqref{eq:ChSmallDomain}, we obtain
\begin{align*}
\left\|\bT p^{(i)} - \bT p^{(j)}\right\|_1 \geq \frac{1}{2d} \left\|p^{(i)} - p^{(j)}\right\|_1 \geq \frac{1}{M^2} \left\|p^{(i)} - p^{(j)}\right\|_1.
\end{align*}
This completes the proof.\qedhere 
\end{proof}

Returning to the original problem setting, let $\bT_1$ be a channel from \cref{claim:IniChannelMSq} that transforms $p^{(i)} \in \Delta_k$ to $q^{(i)} \in \Delta_{M^2}$, such that $\min_{i \neq j}\dtv(q^{(i)},q^{(j)}) \geq \frac{\epsilon}{M^2}$.
Define $\epsilon' = \frac{\epsilon}{M^2}$ and $k' = M^2$.
Applying \cref{prop:JLMainBody}, there exists a channel $\bT_2: [k'] \to [D]$ such that for all $i \neq j$, we have
\begin{align*}
\dtv\left(\bT_2 q^{(i)}, \bT_2 q^{(j)}\right) \gtrsim  \frac{\epsilon' }{\sqrt{k'}} \frac{1}{M^{\frac{2}{D-1}}\sqrt{D \log(Dk')}} =  \frac{\epsilon}{M^{3 + \frac{2}{D-1}} \sqrt{D \log(DM^2)}}.
 \end{align*}
We define the final channel $\bT: [k] \to [D]$ to be the concatenation of the two channels $\bT_1$ and $\bT_2$. In matrix notation, this corresponds to $\bT:= \bT_2 \times \bT_1$.
Then $\opt \gtrsim \frac{\epsilon}{M^{3 + \frac{2}{D-1}} \sqrt{D \log(DM^2)}}$. 
\end{proof}

\subsubsection{Proof of \Cref{prop:JLMainBody}}
\label{app:upper_bounds_for_m-ary}

\label{app:upper_bounds_for_identical_channels}
This section contains the proof of \cref{prop:JLMainBody} that was omitted above.
\JLSkecth*

\begin{proof}
Let $D' := D - 1$.
Consider a matrix $\bH \in \R^{D' \times k}$ that satisfies the following constraints: $H_{i,j} \geq 0$ for all $(i,j)$, and $\sum_{i=1}^{D'} H_{i,j} \leq 1$ for all $j \in [k]$.
Let $\cH_{D'}$ be the set of all such matrices.
It is easy to see that given any matrix $\bH \in \cH_{D'}$, it is possible to generate a unique matrix $\bT \in \cT_D$ by adding an extra row to make the column sums 1. Consider such pairs $(\bH,\bT)$ in $\cH_{D'} \times \cT_D$, and note that $ \left\|\bH p^{(i)} - \bH p^{(j)}\right\|_1 \leq  \left\|\bT p^{(i)} - \bT p^{(j)}\right\|_1 = 0.5\dtv\left(\bT p^{(i)}, \bT p^{(j)}\right)$.
We will generate $\bH$ randomly such that, with positive probability, it belongs to $\cH_{D'}$ and $\min_{i \neq j} \left\|\bH p^{(i)} - \bH p^{(i)}\right\|_1$ is large.

We will show the following result:
\begin{lemma}
\label{lem:JlSkeChannel}
There exists a constant $c > 0$ such that, for any $A = \{ a_1,\dots,a_N \} \subseteq \R^{k}$ such that the sum of the components of each $a_i$ is equal to 0, there is a linear map $\bH \in \cH_{D'}$ such that the following holds:
\begin{align*}
\|\bH a\|_2 \geq c \|a\|_2 \frac{1}{ \sqrt{D'\log(D'k)}} N^{-\frac{1}{D'}}, \qquad \forall  a \in A.
\end{align*}
\end{lemma}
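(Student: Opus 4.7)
The plan is to produce $\bH$ by a random construction that automatically lies in $\cH_{D'}$, and then pass to a deterministic realization via a union bound over $A$. The $N^{-1/D'}$ factor comes from anti-concentration for Rademacher sums combined with independence across the $D'$ rows of $\bH$.

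\emph{Construction.} I take $\tilde H_{ij}=B_{ij}/D'$ where $\{B_{ij}\}_{i\in[D'],j\in[k]}$ are i.i.d.\ Bernoulli$(1/2)$. Every column sum is at most $D'\cdot (1/D')=1$ and every entry lies in $[0,1/D']$, so $\tilde{\bH}\in\cH_{D'}$ deterministically.

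\emph{Reduction to Rademacher sums.} Writing $B_{ij}=(1+\epsilon_{ij})/2$ with i.i.d.\ Rademacher $\epsilon_{ij}$ and using the hypothesis $\sum_j a_j=0$ for each $a\in A$, the constant $(1/2)\sum_j a_j$ disappears and
\[
(\tilde{\bH}a)_i=\frac{1}{2D'}\sum_j \epsilon_{ij}a_j=:\frac{1}{2D'}Y_i(a),
\]
so that $\{Y_i(a)\}_{i=1}^{D'}$ are independent across $i$ (for fixed $a$), each a Rademacher sum with mean $0$ and variance $\|a\|_2^2$. In particular, $\|\tilde{\bH}a\|_2^2=(2D')^{-2}\sum_i Y_i(a)^2$.

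\emph{Small-ball estimate.} The technical core is an anti-concentration bound of the shape
\[
\P\bigl(|Y_i(a)|\leq s\|a\|_2\bigr)\leq C\bigl(s+(D'k)^{-1/2}\bigr),\qquad s\in(0,1],
\]
which I would obtain by Berry--Esseen applied to $Y_i(a)/\|a\|_2$ after truncating the coordinates $j$ where $|a_j|/\|a\|_2$ is large; the Lyapunov error term of the CLT is then controlled by the truncation level, and handling the truncated mass accounts for the $\sqrt{\log(D'k)}$ factor in the final bound.

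\emph{Independence and union bound.} Since $\|\tilde{\bH}a\|_2\leq \tau\|a\|_2$ forces $|Y_i(a)|\leq 2D'\tau\|a\|_2$ for every $i$, independence across rows gives
\[
\P\bigl(\|\tilde{\bH}a\|_2\leq \tau\|a\|_2\bigr)\leq \bigl[C\bigl(2D'\tau+(D'k)^{-1/2}\bigr)\bigr]^{D'}.
\]
Choosing $\tau=c_0 N^{-1/D'}/\sqrt{D'\log(D'k)}$ for a small absolute constant $c_0$ makes the right-hand side at most $1/(2N)$. A union bound over the $N$ vectors $a_1,\ldots,a_N\in A$ then yields a realization $\bH\in\cH_{D'}$ with $\|\bH a_r\|_2\geq c\|a_r\|_2/(\sqrt{D'\log(D'k)}\cdot N^{1/D'})$ for every $r$, as desired.

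\emph{Main obstacle.} The hard part is the small-ball inequality. A naive Paley--Zygmund bound only yields $\P(|Y_i|\leq s\|a\|_2)\leq 1-\Omega(1)$, which, raised to the $D'$-th power, only supports $N\lesssim e^{D'}$ and gives a $(\log N)$-type dependence rather than the required $N^{1/D'}$. To get genuine linear-in-$s$ behaviour of the small-ball probability, Berry--Esseen is needed, and its error term forces a truncation step that splits $a$ into a ``spread'' part (handled by the Gaussian approximation) and a ``spiky'' part (handled separately). Making this splitting work uniformly over all $a\in A$ while losing at most a $\sqrt{\log(D'k)}$ factor is the delicate point.
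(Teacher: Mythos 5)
Your construction takes the same high-level route as the paper — use a random matrix whose ``mean part'' is in the kernel of all the $a\in A$ (so it can absorb the channel constraints) plus a zero-mean fluctuation that provides anti-concentration — but the specific randomness you pick breaks the argument at exactly the step you flag as delicate. The small-ball estimate
\[
\P\bigl(|Y_i(a)|\le s\|a\|_2\bigr)\le C\bigl(s+(D'k)^{-1/2}\bigr)
\]
for the Rademacher sum $Y_i(a)=\sum_j\epsilon_{ij}a_j$ is simply \emph{false} for sparse $a$. Take $a=(1,-1,0,\dots,0)$, which is a perfectly legitimate member of $A$ (e.g.\ $a=p^{(i)}-p^{(j)}$ where $p^{(i)},p^{(j)}$ are point masses at two coordinates). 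Then $Y_i(a)=\epsilon_{i1}-\epsilon_{i2}\in\{-2,0,2\}$ with $\P(Y_i(a)=0)=1/2$, so $\P(|Y_i(a)|\le s\|a\|_2)\ge 1/2$ for \emph{every} $s>0$, and no choice of $C$ makes your inequality hold once $(D'k)^{-1/2}$ is small. Raising this to the $D'$-th power gives $2^{-D'}$, which only supports $N\lesssim 2^{D'}$ and yields a $\log N$-type dependence, exactly the weak bound you wanted to avoid. The truncation idea does not rescue this: for sparse $a$ the ``spiky'' part \emph{is} essentially all of $\|a\|_2$, and removing it leaves a vector with negligible $\ell_2$ mass whose Rademacher sum has no useful anti-concentration; convolving against the spiky part, which has a large atom at zero, does not restore the bound.

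The paper's proof sidesteps this entirely by taking $\bH=\frac{1}{Q_1}\bigl(\bJ+\bG/Q_2\bigr)$ with Gaussian $\bG$. Because $\bJ a=0$ kills the shift exactly as in your argument, one has $\bH a=\frac{1}{Q_1Q_2}\bG a$, and $\|\bG a\|_2^2\sim\|a\|_2^2\,\chi^2_{D'}$ \emph{exactly} — not approximately — for every $a$, sparse or not. The lower tail of $\chi^2_{D'}$ then directly gives the $N^{-1/D'}$ rate without any Berry--Esseen or truncation step, and the $\sqrt{\log(D'k)}$ factor arises solely from the normalization $Q_2$ needed to push the Gaussian matrix into $\cH_{D'}$ with constant probability (you get membership in $\cH_{D'}$ for free, but pay for it by losing the anti-concentration). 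If you want to keep a discrete construction you would need entries whose partial sums have no atoms uniformly over $a$, which Bernoulli does not provide; otherwise, switching to the Gaussian-plus-shift construction is the clean fix. There is also a secondary issue in your parameter accounting (with $s=2D'\tau$ and $\tau\asymp N^{-1/D'}/\sqrt{D'\log(D'k)}$ the base of the $D'$-th power is off by a factor $\sqrt{D'/\log(D'k)}$), but that is minor next to the failure of the anti-concentration step.
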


Before giving the proof of \cref{lem:JlSkeChannel}, we show how to use it to complete the proof of \cref{prop:JLMainBody}.
Let $A = \left\{ p^{(i)} - p^{(j)}: 1 \leq i < j \leq M\right\}$, and observe that $A$ satisfies the conditions of \cref{lem:JlSkeChannel} with $N \leq M^2$.
Using the $\bH$ in \cref{lem:JlSkeChannel} and the fact that for $x \in \R^p$, we have $ \|x\|_1 \geq \|x\|_2 \geq \|x\|_1/\sqrt{p}$, we obtain
\begin{align*}
\left\|\bH\left(p^{(i)}- p^{(j)}\right)\right\|_1 & \geq \left\|\bH \left(p^{(i)} - p^{(j)}\right)\right\|_2 \geq 2c \cdot \left\|\left(p^{(i)} - p^{(i)}\right)\right\|_2 \cdot \frac{1}{ \sqrt{D\log(Dk)}} M^{-\frac{2}{D'}}  \\
&\geq 2c \cdot \left\|\left(p^{(i)} - p^{(i)}\right) \right\|_1 \cdot \frac{1}{\sqrt{k}} \frac{1}{ \sqrt{D\log(Dk)}} M^{-\frac{2}{D'}} \\
&\geq c \cdot  \frac{\epsilon}{\sqrt{k}} \frac{1}{ \sqrt{D\log(Dk)}} M^{-\frac{2}{D'}},
\end{align*}
where we use the fact that $\dtv\left(p^{(i)},p^{(j)}\right) = \frac{1}{2} \left\|p^{(i)} - p^{(j)}\right\|_1$.
This completes the proof of \cref{prop:JLMainBody}.
\end{proof}

We now provide the proof of \cref{lem:JlSkeChannel} that was omitted above.

\begin{proof}[Proof of \cref{lem:JlSkeChannel}]
Let $Q_1, Q_2>0$ be numbers to be determined later. Let $\bJ \in \R^{D' \times k}  $ be the matrix of all ones, and let $\bG\in \R^{D' \times k}$ be a matrix with i.i.d.\ $\cN(0,1)$ entries $\{G_{i,j}\}$.  
We will choose $\bH$ to be of the following form:
\begin{align*}
\bH := \frac{1}{Q_1} \left( \bJ + \frac{\bG}{Q_2} \right).
\end{align*}
The following claim shows that with probability at least $0.9$, we have $\bH \in \cH_{D'}$ (the proof is given later).
\begin{claim}
\label{lem:TValChannel}
If  $Q_2  \geq 10\sqrt{\log(kD')}$ and $Q_1 \geq D' + 10 \sqrt{D' \log k}/Q_2$, then with probability at least $9/10$, we have $\bH \in \cH_{D'}$.
\end{claim}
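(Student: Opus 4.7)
The plan is to verify the two defining conditions of $\cH_{D'}$ separately, using standard Gaussian tail bounds together with a union bound. Recall that $\bH \in \cH_{D'}$ requires (i) $H_{i,j} \geq 0$ for all $(i,j)$, and (ii) $\sum_{i=1}^{D'} H_{i,j} \leq 1$ for all $j \in [k]$. Since $H_{i,j} = \frac{1}{Q_1}\bigl(1 + G_{i,j}/Q_2\bigr)$ with $G_{i,j} \sim \cN(0,1)$, condition (i) is equivalent to $G_{i,j} \geq -Q_2$, and condition (ii) is equivalent to $\frac{1}{Q_2}\sum_{i=1}^{D'} G_{i,j} \leq Q_1 - D'$.

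For condition (i), the standard Gaussian tail bound gives $\P(G_{i,j} < -Q_2) \leq e^{-Q_2^2/2}$. A union bound over the $kD'$ entries yields
\begin{equation*}
\P\bigl(\exists (i,j): H_{i,j} < 0\bigr) \leq kD' \cdot e^{-Q_2^2/2}.
\end{equation*}
The assumption $Q_2 \geq 10\sqrt{\log(kD')}$ implies $Q_2^2/2 \geq 50\log(kD')$, so this failure probability is at most $(kD')^{-49}$, which is much smaller than $1/20$ (we may assume $kD' \geq 2$; small cases are trivial).

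For condition (ii), fix a column $j \in [k]$ and note that $S_j := \sum_{i=1}^{D'} G_{i,j} \sim \cN(0, D')$. Hence
\begin{equation*}
\P\bigl(|S_j| > 10 \sqrt{D' \log k}\bigr) \leq 2 \exp(-50 \log k) = 2 k^{-50}.
\end{equation*}
A union bound over the $k$ columns gives that with probability at least $1 - 2k^{-49}$, we have $|S_j| \leq 10\sqrt{D' \log k}$ for all $j$. On this event,
\begin{equation*}
\sum_{i=1}^{D'} H_{i,j} = \frac{1}{Q_1}\left( D' + \frac{S_j}{Q_2} \right) \leq \frac{1}{Q_1}\left( D' + \frac{10\sqrt{D' \log k}}{Q_2} \right) \leq 1,
\end{equation*}
where the final inequality uses the hypothesis $Q_1 \geq D' + 10\sqrt{D'\log k}/Q_2$.

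Combining the two failure probabilities via a union bound shows that both conditions hold simultaneously with probability at least $1 - (kD')^{-49} - 2k^{-49} \geq 9/10$, establishing the claim. There is no real obstacle here; the only mild subtlety is that the column-sum condition is two-sided in $S_j$, but we only need the upper tail to conclude $\sum_i H_{i,j} \leq 1$, so the Gaussian concentration is straightforward and the constants in the hypotheses on $Q_1, Q_2$ are chosen with ample slack.
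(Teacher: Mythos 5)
Your proof is correct and follows essentially the same route as the paper: decompose membership in $\cH_{D'}$ into the entrywise nonnegativity event and the column-sum event, bound each via Gaussian tails (recognizing $\sum_i G_{i,j} \sim \cN(0,D')$), and union bound. You are a bit more explicit about the tail-probability arithmetic (where the paper appeals to "standard results on maxima of Gaussians"), but the argument and the way the hypotheses on $Q_1, Q_2$ are used are identical.
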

We will now show that with high probability, $\bH$ preserves the Euclidean norm of each $a \in A$:
\begin{claim}
\label{lem:ProbTL2Dist}
There exists a constant $c ' > 0$ such that with probability at least $9/10$, we have
$$\|\bH a\|_2 \geq c' 
\frac{\|a\|_2}{Q_1Q_2} \sqrt{D'} N^{-\frac{1}{D'}}, \qquad \forall a \in A.$$
\end{claim}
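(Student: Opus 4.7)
The plan is to exploit the zero-sum structure of each $a \in A$ to reduce the problem to a small-ball estimate for a chi-squared random variable, and then take a union bound.

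First, since every $a \in A$ has entries summing to zero, we have $\bJ a = 0$. Therefore
\begin{equation*}
\bH a \;=\; \frac{1}{Q_1}\left(\bJ a + \frac{\bG a}{Q_2}\right) \;=\; \frac{1}{Q_1 Q_2}\,\bG a,
\end{equation*}
so the claim reduces to showing that, with probability at least $9/10$, for every $a \in A$,
\begin{equation*}
\|\bG a\|_2 \;\geq\; c'\,\|a\|_2\,\sqrt{D'}\,N^{-1/D'}.
\end{equation*}

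Fix $a \in A$. Since the rows of $\bG$ are independent $\cN(0, I_k)$ vectors, the coordinates of $\bG a \in \R^{D'}$ are i.i.d.\ $\cN(0, \|a\|_2^2)$. Consequently, $\|\bG a\|_2^2 / \|a\|_2^2$ is distributed as a chi-squared random variable $Z$ with $D'$ degrees of freedom. The proof boils down to a lower-tail (small-ball) bound for $Z$.

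Next I would bound $\P(Z \leq s)$ using only the polynomial factor in the chi-squared density. Since the density of $Z$ at $x$ is bounded above by $x^{D'/2-1}/(2^{D'/2}\Gamma(D'/2))$, an integration and Stirling estimate yields a clean bound of the form
\begin{equation*}
\P(Z \leq s) \;\leq\; C_0 \left(\frac{e\,s}{D'}\right)^{D'/2}
\end{equation*}
for an absolute constant $C_0$. Setting $s = (c_0 D')\, N^{-2/D'}$ for a sufficiently small absolute constant $c_0 > 0$ makes this probability at most $1/(10N)$. In other words, for each fixed $a \in A$,
\begin{equation*}
\P\!\left(\|\bG a\|_2 \;<\; \sqrt{c_0}\,\|a\|_2\,\sqrt{D'}\,N^{-1/D'}\right) \;\leq\; \frac{1}{10 N}.
\end{equation*}

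Finally, taking a union bound over the $N$ elements of $A$ shows that with probability at least $9/10$, the inequality $\|\bG a\|_2 \geq \sqrt{c_0}\,\|a\|_2\,\sqrt{D'}\,N^{-1/D'}$ holds simultaneously for all $a \in A$. Dividing by $Q_1 Q_2$ and setting $c' = \sqrt{c_0}$ gives the claim. The one non-routine step is the small-ball estimate for $Z$: the scale we need is $s \asymp D'\, N^{-2/D'}$, which is much smaller than the mean $D'$, so a concentration inequality like Laurent--Massart is too weak and one must work directly with the polynomial part $x^{D'/2 - 1}$ of the chi-squared density. The factor $N^{-1/D'}$ (rather than the familiar $N^{-1/2}$ from Johnson--Lindenstrauss) is precisely the price paid for demanding that an isometry into $D' \ll \log N$ dimensions preserve the norms of all $N$ vectors.
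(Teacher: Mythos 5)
Your proposal is correct and mirrors the paper's proof essentially step for step: exploit $\bJ a = 0$ to reduce to $\bG a$, observe $\|\bG a\|_2^2/\|a\|_2^2 \sim \chi^2_{D'}$, apply the lower-tail estimate $\P(\chi^2_{D'}\leq t)\leq (et/D')^{D'/2}$ at the scale $t\asymp D'N^{-2/D'}$, and union bound over $N$ vectors. (The extra constant $C_0$ in your small-ball bound is not needed---Stirling's bound $\Gamma(D'/2+1)\geq (D'/(2e))^{D'/2}$ already gives $C_0=1$---but this does not affect the argument.)
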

Given \cref{lem:TValChannel,lem:ProbTL2Dist}, if we choose $Q_2 = 10\sqrt{\log(k D')}$ and $Q_1 = 11D'$, then with probability at least $0.8$, we have $\bH \in \cH_{D'}$, and for all $a \in A$, we have
\begin{align*}
\|\bH a\|_2 \geq c'\|a\|_2 \frac{1}{ \sqrt{\log(Dk)}} \sqrt{D} N^{-\frac{1}{D'}}.
\end{align*}
This completes the proof of \cref{lem:JlSkeChannel}.
\end{proof}

We now provide the proofs of intermediate results that we have used.
\begin{proof}[Proof of \cref{lem:TValChannel}]
We need to ensure that the following two events hold simultaneously:
\begin{align*}
\cE_1 &:= \left\{ \forall (i,j) \in [D']\times [k]: \,\, H_{i,j} \geq 0 \right\}, \\
\cE_2 &:= \left\{ \forall j \in [k]: \,\,  \sum_{i=1}^{D'} H_{i,j} \leq 1 \right\}.
\end{align*}

The event $\cE_1$ holds when for all $(i,j)$, we have $1 + G_{i,j}/Q_2 \geq 0$ if and only if $G_{i,j} \geq - Q_2$.
Thus, it suffices to take $Q_2 > \sup_{i \in [D'], j \in [k]} |G_{i,j}|$.
Taking $Q_2 = 10\sqrt{\log(kD')}$, standard results on maxima of Gaussian random variables \cite{Wainwright19} imply that $\cE_1$ holds with probability at least $0.95$.

We now focus on $\cE_2$. For $j \in [k]$, letting $Z_j := \sum_{i=1}^{D'} H_{i,j} = \sum_{i=1}^{D'} \frac{1}{Q_1}(1 + G_{i,j}/Q_2)$,
we have $Z_j - \frac{D'}{Q_1}\sim \cN\left( 0 , \frac{D'}{Q_1^2Q_2^2}  \right)$.
A union bound then implies that with probability $0.95$, for all $Z_j \in [k]$, we have
\begin{align*}
Z_j \leq \frac{D'}{Q_1} + 10 \sqrt{\frac{D'}{Q_1^2Q_2^2}} \sqrt{\log(k)} = \frac{1}{Q_1}\left( D' +  10\frac{\sqrt{D' \log k}}{Q_2} \right).
\end{align*}
For this to be at most $1$, we need $Q_1 \geq D' +  10\frac{\sqrt{D' \log k}}{Q_2}$. By a union bound, the events $\cE_1$ and $\cE_2$ hold simultaneously with probability at least $0.9$.
This completes the proof.
\end{proof} 

\begin{proof}[Proof of \cref{lem:ProbTL2Dist}]
Without loss of generality, we will assume that $\|a\|_2 = 1$.
It suffices to show that for all $a \in A$, with probability at least $1 - \frac{1}{10 N}$, we have $\|\bH a\|_2 \geq c'\frac{1}{Q_1 Q_2} \sqrt{D'} N^{-\frac{1}{D'}}$.
Equivalently, we will show that $\P\left( \|\bH a\|_2 < c' \frac{1}{Q_1 Q} \sqrt{D'} N^{-\frac{1}{D'}} \right) \leq \frac{1}{10N}$.
For any $a\in A$, we note that $\bJ a$ is a zero vector, so
\begin{align*}
\bH a = \frac{1}{Q_1} \left( \bJ + \frac{\bG}{Q_2} \right)a = \frac{1}{Q_1} \left( \bJ a + \frac{\bG a}{Q_2} \right) = \frac{1}{Q_1 Q_2} \bG a. 
\end{align*}
Letting $G_1,\dots,G_{D'}$ be the rows of $\bG$, and letting $\chi^2_{D'}$ be a chi-square random variable with $D'$ degrees of freedom, we have
\begin{align*}
\|\bH a\|_2^2 = \left( \frac{1}{Q_1 Q_2} \right)^2 \sum_{i=1}^{D'} (G_i^\top a)^2 \sim  \left( \frac{1}{Q_1 Q_2} \right)^2 \chi^2_{D'},
\end{align*}
since $G_i$ is an isotropic multivariate Gaussian and $a$ has unit norm.
Standard approximations for $\chi^2_{D'}$\footnote{This can be obtained by upper-bounding the pdf of the $\chi^2$ random variable and using Stirling's approximation.} imply that
\begin{align*}
\P\left(\chi^2_{D'} \leq t \right) \leq \left( \frac{et}{D'} \right)^{D'/2}.
\end{align*}
Furthermore, for $t_*= \frac{D'}{e} \left( \frac{1}{10 N} \right)^{\frac{2}{D'}}$, the expression on the right-hand side is less than $0.1/N$. Then
\begin{align*}
\P\left(\|\bH a\|_2^2 \leq \left( \frac{1}{Q_1 Q_2} \right)^2 t_*   \right) &= \P\left\{ \chi^2_{D'} \geq t_*   \right\} \leq \frac{1}{10N}.
\end{align*}
Thus, with probability at least $1 - \frac{1}{10 N}$, we have
$$\|\bH a\| \geq \sqrt{\frac{1}{e}} \left( \frac{1}{10} \right)^{ \frac{1}{D'}} \cdot 	\sqrt{D'} N^{-\frac{1}{D'}}  \cdot \frac{1}{Q_1 Q_2},$$
completing the proof of the claim with $c'= \frac{1}{\sqrt{e} 10^{1/D'}} \geq 0.001$.
\end{proof}

\subsection{Lower bounds}
\label{sec:lower_bound_for_m_ary_hypothesis_testing}

In this section, we present lower bounds for the $M$-ary hypothesis testing problem.
We begin by proving a lower bound of $\Omega(M)$ for adaptive algorithms, thus also for non-adaptive algorithms.
Although we state our results for sequentially adaptive algorithms, these lower bounds also hold for the more general blackboard protocol; see the papers \cite{BravGMNW16,SteVW16} for more details.
\begin{theorem} [Adaptive lower bounds]
\label{thm:Adaptive-lb} 
For every $M\geq 2$ and $\epsilon < 0.1$, there exist $k \in \N$ and a set of $M$ distributions $\cP_M \subseteq \Delta_k$ such that the following hold:
\begin{enumerate}
  \item (Lower bound from strong distributed data processing and direct-sum reduction~\cite{BravGMNW16}.) $k = O\left(2^M\right)$, $\nstar(\cP_M) \lesssim \frac{\log M}{\epsilon^2}$, and $\nada(\cP_M, \cT_D) \gtrsim \frac{M}{\epsilon^2 \log D}$.
  \item (Lower bound from SQ dimension~\cite{SteVW16,Feldman17}.)    $k = O(M)$, $\nstar(\cP_M) \lesssim \frac{\log M}{\epsilon^2}$, and $\nada(\cP_M, \cT_D) \gtrsim \Omega\left(\frac{M^{1/3}}{\epsilon^{2/3} D^{2/3} (\log D)^{1/3}}\right)$, as long as $M \gtrsim \frac{\log D}{\epsilon D} $.
\end{enumerate} 
\end{theorem}
The proof of \cref{thm:Adaptive-lb} is given in \cref{app:lower_bounds_for_m-ary}.

\begin{remark}[An elementary proof of $\Omega(\sqrt{M})$] We also provide an elementary proof of an $ \Omega(\sqrt{M})$ lower bound for non-adaptive channels that relies on the impossibility of $\ell_1$-embedding using linear transforms~\cite{LeeMN05,ChaSah02}. See \Cref{app:lower_bounds_from_impossibility_of_l1} for more details.
\end{remark}

\begin{theorem}[Lower bounds for identical channels and $D=2$]
\label{thm:M-ary-Identical-lb}
There exist constants $c_1 , c_2 > 0$ such that the following holds for every $M\geq 2$ and $\cP_M := \left\{ p^{(1)},\dots,p^{(M)} \right\} \subseteq \Delta_k$: Let $\epsilon_1 := \min_{i \neq j} \hel\left(p^{(i)},p^{(j)}\right)$ and $\epsilon_2:= \max_{i \neq j} \hel\left(p^{(i)},p^{(j)}\right)$. Then  $\nstar(\cP_M) \leq \frac{c_1\log M}{\epsilon_1^2}$ and $\nide(\cP_M, \cT_2) \geq \frac{c_2 M^2}{\epsilon_2^2}$. 
\end{theorem}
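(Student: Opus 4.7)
The plan is to dispense with the upper bound on $\nstar(\cP_M)$ immediately, since it is a direct instance of \Cref{fact:testing}(3), and concentrate on the lower bound $\nide(\cP_M,\cT_2) \geq c_2 M^2/\epsilon_2^2$. I fix an arbitrary identical channel $\bT \in \cT_2$; since $\bT$ outputs a single bit, each $\bT p^{(i)}$ is a Bernoulli distribution $B(\theta_i)$ with some $\theta_i \in [0,1]$, so the decentralized $M$-ary problem degenerates to an unconstrained $M$-ary test between $M$ Bernoullis. Applying \Cref{fact:testing}(2) to the binary test embedded between the closest pair, the sample complexity using $\bT$ is at least $c/\min_{i\neq j}\hel^2(B(\theta_i),B(\theta_j))$, so the whole theorem reduces to the purely geometric claim
\[
\min_{i \neq j}\hel^2(B(\theta_i), B(\theta_j)) \;\lesssim\; \epsilon_2^2/M^2 \qquad \text{uniformly in } \bT \in \cT_2.
\]

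My approach to this geometric bound is to use the Hellinger-isometric parametrization of Bernoullis. Setting $\phi_i := 2\arcsin(\sqrt{\theta_i}) \in [0,\pi]$ and expanding the Bhattacharyya coefficient yields the exact identity $\hel^2(B(\theta_i),B(\theta_j)) = 4\sin^2\bigl((\phi_i-\phi_j)/4\bigr)$, so the squared Hellinger between two Bernoullis is (up to a bounded factor) the square of their $\phi$-gap. By the data processing inequality (\Cref{fact:div}(4)) together with $\max_{i\neq j}\hel(p^{(i)},p^{(j)}) \leq \epsilon_2$, every pairwise Hellinger on the Bernoulli side is at most $\epsilon_2$. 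Since $(\phi_i-\phi_j)/4 \in [-\pi/4,\pi/4]$, where $\sin$ is monotone, inverting gives $|\phi_i - \phi_j| \leq 4\arcsin(\epsilon_2/2) \leq \pi\epsilon_2$ for every pair. Hence the $M$ angles $\phi_1,\ldots,\phi_M$ all lie in a sub-interval of $[0,\pi]$ of length at most $\pi\epsilon_2$.

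A one-line pigeonhole then produces indices $i\neq j$ with $|\phi_i - \phi_j| \leq \pi\epsilon_2/(M-1)$, and substituting into the sine identity (together with $|\sin x| \leq |x|$) gives $\hel^2(B(\theta_i),B(\theta_j)) \leq \pi^2\epsilon_2^2/(M-1)^2 \lesssim \epsilon_2^2/M^2$. This establishes the geometric claim uniformly in $\bT$, which combined with the opening Le Cam reduction proves $\nide(\cP_M,\cT_2) \gtrsim M^2/\epsilon_2^2$ with an explicit constant. The only genuinely non-routine ingredient is the arcsine re-parametrization, which linearizes Hellinger distance on Bernoullis and reduces the whole problem to one-dimensional pigeonhole; beyond bookkeeping the absolute constants, I anticipate no real technical obstacle.
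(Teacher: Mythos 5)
Your proof is correct, and it follows the same high-level plan as the paper's: fix an identical channel $\bT\in\cT_2$, observe that each $\bT p^{(i)}$ is Bernoulli, invoke the data processing inequality to confine all pairwise Hellinger gaps, pigeonhole to find a close pair, and convert back to a sample-complexity lower bound via \cref{fact:testing}. The difference is in the geometric engine used to carry out the pigeonhole. The paper works directly with the parameters $q_i$, uses a two-sided estimate $\sqrt{q'}-\sqrt{q}\le\hel(\mathrm{Ber}(q),\mathrm{Ber}(q'))\le\sqrt{2}\,(\sqrt{q'}-\sqrt{q})$ valid only on $[0,1/2]$ (\cref{prop:simpleHellinger}), then needs the extra bookkeeping step of arguing WLOG that at least $M/2$ of the parameters lie in $[0,1/2]$ before telescoping $\sum_i(\sqrt{q_{i+1}}-\sqrt{q_i})\le\sqrt{2}\,\epsilon_2$. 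You instead pass to $\phi_i=2\arcsin\!\sqrt{\theta_i}$ and use the exact identity $\hel^2(B(\theta_i),B(\theta_j))=4\sin^2\bigl((\phi_i-\phi_j)/4\bigr)$, which is a global isometry (up to a bounded constant, since $\sin$ is comparable to the identity on $[-\pi/4,\pi/4]$); this avoids the $[0,1/2]$ restriction and the "at least half the parameters" case split entirely, and the pigeonhole happens in one uniform interval of length $O(\epsilon_2)$. The identity checks out (the Bhattacharyya coefficient becomes $\cos\bigl((\phi_i-\phi_j)/2\bigr)$), the inequalities $\arcsin x\le(\pi/2)x$ and $|\sin x|\le|x|$ you use are valid on the relevant ranges, and $(M-1)^{-2}\lesssim M^{-2}$ for $M\ge2$, so the constants work out. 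Your version is slightly cleaner; both are short.
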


\begin{remark}\label{rem:Ident-lb-M-sq}
\cref{thm:M-ary-Identical-lb} is a strong lower bound in the sense that it holds for every set of distributions. By a standard volumetric argument, it is possible to construct a set of $M$ distributions $\cP_M \subseteq \Delta_k$ such that (i) $\epsilon_2 \lesssim \epsilon_1 = c$ for a constant $c$  and (ii) $M = 2^{\Omega(k)}$. As an algorithm for distribution estimation in $\dtv$ implies a testing algorithm (see, e.g., \cite{Tsybakov09}), a standard argument implies that any algorithm that uses an identical channel $\bT \in \cT_2$   and learns the underlying distribution $p$ in $\dtv$ up to a small constant requires at least $2^{\Omega(k)}$ samples.  This is in stark contrast to the setting of non-identical channels, where Acharya, Canonne, and Tyagi~\cite{AchCT20-II}
 provide an algorithm with sample complexity $O(k^2)$.
\end{remark}

\begin{proof}[Proof of \cref{thm:M-ary-Identical-lb}]
Observe that the upper bound on $\nstar(\cP_M)$ follows directly from \Cref{fact:testing}. We now focus on the lower bound.
Our goal is to show that there exists a constant $c_2$ such that 
\begin{align}
\label{eq:ideChanD2}
\sup_{\bT \in \cT_2} \min_{i \neq j} \hel(\bT p^{(i)}, \bT p^{(j)}) \leq \frac{c_2 \epsilon_2}{M},
\end{align}
since \cref{fact:testing} then implies a lower bound of $\nstar(\cP_M, \{\bT\}) \gtrsim \frac{M^2}{\epsilon_2^2}$.

Fix a channel $\bT \in \cT_2$.
Let $\cQ_M = \left\{ q^{(1)},\dots,q^{(M)} \right\}$ be the set of binary distributions obtained after transforming $\cP_M$ via the channel, where $q^{(i)} = \bT p_i$.
Since $\cQ_M$ is a set of binary distributions, each distribution $q^{(i)}$ can be represented by a single scalar parameter, which we denote by $q_i$.
Without loss of generality, let $0 \leq q_1 <  q_2  < \cdots < q_k \le 1$.
By the data processing inequality, we have $\max_{i \neq j} \hel\left(q^{(i)},q^{(j)}\right) \leq \hel\left(p^{(i)},p^{(j)}\right) \leq \epsilon_2$.
We will show that in fact, there exists an index $i^*$ such that $\hel\left(q^{(i^*)}, q^{(i^*+1)}\right) \lesssim \frac{\epsilon_2}{M}$.
Taking a supremum over all $\bT \in \cT_2$ would then establish the result in inequality~\eqref{eq:ideChanD2}.

For a $q \in [0,1]$, let $\text{Ber}(q)$ denote the Bernoulli distribution with parameter $q$.
For $0 \leq q \leq q' \leq 1/2$, we have the following (cf.\ \cref{prop:simpleHellinger}):
\begin{align}
\sqrt{q'}-\sqrt{q} \leq \hel\left(\text{Ber}(q),\text{Ber}(q')\right) \leq \sqrt{2} \left(\sqrt{q'}-\sqrt{q}\right).
\label{eq:HellingerLinear}
\end{align}
Let $r$ be the largest integer such that $q_{r} \leq 1/2$. We will assume that $r \geq \frac{M}{2}$ (otherwise, apply
the following argument to $1 - q_i$).
Using inequality~\eqref{eq:HellingerLinear} twice, we obtain
\begin{align*}
\sum_{i=1}^{r-1} \hel\left(q^{(i)},q^{(i+1)}\right) &\leq \sqrt{2} \sum_{i=1}^{r-1} \left(\sqrt{q_{i+1}} - \sqrt{q_i}\right)  = \sqrt{2} \left(\sqrt{q_r} - \sqrt{q_1}\right) \leq \sqrt{2} \hel\left(q^{(1)},q^{(r)}\right) \leq \sqrt{2} \epsilon_2. 
\end{align*}
As $r \geq \frac{M}{2}$, the preceding inequality implies that there exists some $i^*$ such that $$\hel\left(q^{(i^*)},q^{(i^*+1)}\right) \lesssim \frac{\epsilon_2}{r} \lesssim \frac{\epsilon_2}{M}.$$
\end{proof}

\section{Discussion} %
\label{sec:discussion}

We have studied the problem of simple  hypothesis testing under communication constraints. Taking a cue from past work on decentralized detection, we have focused on threshold channels and analyzed the sample complexity for a (nearly optimal) threshold channel. For simple binary hypothesis testing, we showed that this choice leads to an at most logarithmic increase in the sample complexity of the test.
We extended this result to the robust setting, where distributions may be contaminated in total variation distance. Importantly, our algorithms for hypothesis testing in the simple and robust settings were shown to be computationally efficient.
Finally, we studied $M$-ary hypothesis testing by considering settings where the channels are identical, non-identical, or adaptive. We showed that communication constraints may lead to an exponential increase in sample complexity, even for adaptive channels. For identical channels, we developed an efficient algorithm and analyzed its sample complexity. 
At a technical level, our results rely on a reverse data processing inequality for communication-constrained channels, a reverse Markov inequality, and a sketching algorithm akin to the Johnson--Lindenstrauss theorem.

There are several research directions that are worth exploring: Is adaptivity of channels useful in simple binary hypothesis testing? Can one tighten the dependence of the sample complexity on the minimum Hellinger distance between distributions for $M$-ary hypothesis testing? (Our results use total variation as a proxy for Hellinger and are likely to be loose.) It would also be interesting to study simple hypothesis testing under other constraints such as local differential privacy and memory. There has been some recent work along these lines in Pensia, Asadi, Jog, and Loh~\cite{PAJL23}. The technical tools developed in this paper, particularly the reverse data processing inequality, may also have applications in quantization via ``single-shot compression'' in information theory.

\section*{Acknowledgement}

We thank the anonymous reviewers for their feedback that has led to an improved manuscript.

\bibliographystyle{alpha}
\bibliography{ref}

\appendix

\section{Effect of public randomness on sample complexity}
\label{app: public}

We now consider a more general setting than the one in \Cref{def:simpleHypothesisTestComm}.
Suppose we perform a (possibly composite) hypothesis test between $M$ sets of distributions $D_1, \dots, D_M$. When each set is a singleton, we recover the usual $M$-ary hypothesis testing problem from \Cref{def:simpleHypothesisTestComm}. 

We assume that a public random variable $U$ is observed by $\{U_0, \dots, U_n\}$; i.e., by each user and the central server. If $X_i \sim q \in D_i$, we say that an error occurs if the declared hypothesis $\hat D$ is not equal to $D_i$. We say that $\hat D$ successfully solves the hypothesis testing problem if the maximum error is small; i.e., 
$$\max_{i \in M} \max_{q \in D_i} \mathbb P_U(\hat D \neq D_i) \leq r,$$  
for a sufficiently small constant $r>0$. Observe that the probability is also taken over the public randomness $U$.
We chose the $\max$ in the error probability for technical reasons. Moreover, a lower bound on the $\max$ formulation also implies a lower bound on the $\sum$ formulation.

We use $n^*_{\text{public}}(r)$ to refer to the resulting sample complexity above in the presence of public randomness.
When there is only private randomness, we use $n^*_{\text{private-coin, non-identical}}(r)$ for the resulting sample complexity. Our focus will be on the regime where $r$ is small enough.

We will argue that for any distribution testing problem, the following holds: 
\begin{align}
\label{eq:public-vs-private}
n^*_{\text{public}}(r) \geq  n^*_{\text{private-coin, non-identical}}\left(2r\sum_{i=1}^M |D_i|\right) .
\end{align}
In particular, for simple binary hypothesis testing, the two bounds are within constants (and thus equal to deterministic protocols by Tsitsiklis~\cite[Proposition 2.1]{Tsitsiklis93}). However, this is not the case for the simple M-ary hypothesis testing problem. Here, an extra factor of $\log M$ differentiates these two bounds (for $r$ smaller than $1/M$). In particular, for simple $M$-ary hypothesis testing and 
 $r \ll 1/M$,
  we have
\begin{align*}
 n^*_{\text{private-coin, non-identical}}\left(r\right) &\geq n^*_{\text{public}}(r)  \geq n^*_{\text{private-coin, non-identical}}\left(2rM\right) 
\gtrsim \frac{n^*_{\text{private-coin, non-identical}}\left(r\right)}{\log\left(M\right)},
\end{align*}
where the last inequality follows by a standard application of boosting. 

Note that the above analysis is only useful if the sets $D_i$ are finite. When this is not the case, public-coin protocols may be superior to private-coin protocols, as hinted at in Acharya, Canonne, and Tyagi~\cite{AchCT20-II}. 
Also, note that inequality~\eqref{eq:public-vs-private} does not contradict the exponential lower bound of identical private-coin protocols of \Cref{rem:Ident-lb-M-sq} since inequality~\eqref{eq:public-vs-private} does not provide any comparison between identical and non-identical private-coin protocols.

We now provide the proof of inequality~\eqref{eq:public-vs-private}. 
\begin{proof}
Let $t \geq 1$ be a value to be decided later. 
For every $i \in [M]$  and $q \in D_i$, define the set $V_q(rt)\subseteq {\cal U}$ to be the set of $u \in {\cal U}$ such that the probability of error with $n^*_{\text{public}}(r)$ samples, conditioned on $U=u$, is at most  $rt$.
By Markov's inequality, the probability $\mathbb P(U \in V_q(rt))$ is at least  $1 - 1/t$. Define $V(rt)$  to be the intersection of all these $V_q(rt)$'s, i.e., $V(rt) = \bigcap_{i \in M} \bigcap_{q \in D_i} V_q(rt)$. By a union bound, we have 
$$\mathbb P(U \in V(rt)) \geq 1 - \frac{\sum_{i=1}^M |D_i|}{t}.$$ 
This probability is positive for $t= 2 \sum_{i=1}^M |D_i|$, so the set $V(rt) $ is non-empty.
Observe that for any $u \in V(rt)$, we are in the non-interactive, non-identical, private-coin protocol setting for the original $M$-ary testing problem, with the probability of error blown up to be $rt$ instead of   $r$. 
Since the set $V(rt)$ is non-empty, there is a non-interactive, non-identical, private-coin protocol with $n^*_{\text{public}}(r)$ samples with error at most $rt$, by choosing the protocol for $u \in V(rt)$. 
Thus, we obtain inequality~\eqref{eq:public-vs-private}.
\end{proof}

\section{Reverse data processing} %
\label{app:reverse_data_processing}

In this section, we prove our results from \Cref{sec:RevDataProc}.
\Cref{app:RevDatProcProof} contains the proof of the 
reverse data processing inequality (\Cref{thm:quant-scheme}).
 We establish the tightness of the reverse data processing inequality for Hellinger distance (\cref{lem:HellTight})  in \Cref{app:tightRevDatProc}.

Fix the distributions $p$ and $q$ over $[k]$.
For $0 \leq l < u < \infty$, we first define the following sets\footnote{When $q(x) = 0$ for some $x$ and $p(x) \neq 0$, we think of $p(x)/q(x) = \infty$. Without loss of generality, we can assume that for each $x \in [k]$, at least one of $p(x)$ or $q(x)$ is non-zero.}:
\begin{align}
\label{eq:Adefinition}
A_{l,u} &= \left\{ i \in [k]: \frac{p_i}{q_i} \in [l,u) \right\} \,\, \text{and}\,\,\nonumber
\\ A_{l,\infty} &= \left\{ i \in [k]: \frac{p_i}{q_i} \in [l,\infty] \right\}.
\end{align}
We will use the notation from \cref{def:thresh_test}.

\subsection{Reverse data processing: Proof of \Cref{thm:quant-scheme}}
\label{app:RevDatProcProof}
\fDivRevDataProc*

\begin{proof}
Let $\kappa > 0$ be as in \cref{def:well-behaved}.
By definition of the $f$-divergence, we have the following:
\begin{align}
I_f(p,q) &= \sum_{i \in A_{1+\kappa,\infty}} q_i f\left( \frac{p_i}{q_i} \right) + \sum_{i \in A_{1, 1+ \kappa}} q_if\left( \frac{p_i}{q_i} \right) \nonumber \\
&\qquad + \sum_{i \in A_{1/(1+\kappa),1}}q_i f\left( \frac{p_i}{q_i} \right) + \sum_{i \in A_{0,1/(1+\kappa)}}q_if\left( \frac{p_i}{q_i} \right),
\label{eq:DecomHell}
\end{align}
where the sets $A_{l,u}$ are defined as in equation~\eqref{eq:Adefinition}. Note that the sets $A_{1+\kappa,\infty}$ and $A_{0,1/(1+\kappa)}$ contain  the elements that have a large ratio of probabilities under the two distributions. We now consider two cases.

\paragraph{Case $1$: Main contribution by large ratio alphabets} %
\label{par:case_}

We first consider the case when $\sum_{i \in A_{1+\kappa,\infty} \cup A_{0, 1/(1+\kappa)}} q_if\left( \frac{p_i}{q_i} \right) \geq \frac{I_f(p,q)}{2}$.
As we will show later, this is the simple case ($D=2$ already achieves the claim).
By symmetry of the $I_f$-divergence for the well-behaved $f$-divergence (\ref{item:symmetry} in \cref{def:well-behaved}), it suffices to consider the case when $\sum_{i \in A_{1+\kappa,\infty} } q_if\left( \frac{p_i}{q_i} \right)  \geq \frac{I_f(p,q)}{4}$.\footnote{
\label{footnote:asymmetry}
That is, we can apply the following argument to the distributions $\tilde{p}:=q$ and $\tilde{q} := p$ with $\tilde{A}$ defined as in equation~\eqref{eq:Adefinition} (with $\tilde{p}$ and $\tilde{q}$). There is a slight asymmetry because of the elements that have likelihood ratio exactly $1+ \kappa$ or $1/(1 + \kappa)$, but note that if $\sum_{i \in A_{0,1/(1+\kappa)} } q_if\left( \frac{p_i}{q_i} \right)  \geq \frac{I_f(p,q)}{4}$, we have $\sum_{i \in \tilde{A}_{1+\kappa, \infty} } \tilde{q}_if\left( \frac{\tilde{p}_i}{\tilde{q}_i} \right) \geq \sum_{i \in {A}_{0, 1/(1+\kappa)} } {q}_if\left( \frac{{p}_i}{{q}_i} \right) \geq \frac{I_f(p,q)}{4} $, because $A_{0,1/(1+\kappa)} \subseteq \tilde{A}_{1+\kappa,\infty}$, since the interval in $A_{0,l}$ is left-open.}

We will show that there exists $\bT \in \cTT_2$ such that $I_f(\bT p,\bT q) \geq \frac{f(1/(1+\kappa)}{4f(\nu)} I_f(p,q)$. 
Let $\bT$ be the channel corresponding to the threshold $1 + \kappa$, i.e., $\bT$ corresponds to the function $i\mapsto \I_{A_{1+\kappa,\infty}}(i)$. 
Note that $p' := \bT p$ and $q' := \bT q$ are distributions on $\{0,1\}$, with $(\bT p)_1 = \sum_{i \in A_{1+\kappa,\infty}} p_i$ and $(\bT q)_1 = \sum_{i \in A_{1+\kappa,\infty}} q_i$.
Furthermore, $p' \geq (1+\kappa) q'$.
Using convexity and non-negativity of $f$, and the fact that $f(1)= 0$  (see \ref{item:pos}), we have $f(x) \leq f(y)$ for $0 \leq y \leq x \leq 1$.
Using the non-negativity of $f$ (\ref{item:pos}), symmetry of $f$ (\ref{item:symmetry}), and monotonically decreasing property of $f$ on $[0,1]$, we obtain the following:
\begin{align}
 I_f(\bT p, \bT q) &= p' f\left( \frac{q'}{p'} \right) + (1 - p') f\left( \frac{1-q'}{1-p'} \right) \nonumber   \\
 &\nonumber \geq p' f\left( \frac{q'}{p'} \right) \\
    &\geq p' f\left( \frac{1  }{1+\kappa} \right).
\label{eq:FDiv-heavy1}
\end{align}
Moreover, by the assumption that $ \sum_{i \in A_{1+\kappa,\infty} } p_i f\left( \frac{q_i}{p_i} \right) \ge 0.25 I_f(p,q)$ (where we use the symmetry property of $f$ in \ref{item:symmetry}), we have
\begin{align}
0.25I_f(p,q) &\leq  \sum_{i \in A_{ 1+\kappa,\infty} } p_i f\left( \frac{q_i}{p_i} \right) \leq  \sum_{i \in A_{1+\kappa,\infty} } p_i f( \nu) 
= p' f(\nu),
\label{eq:FDiv-heavy2}
\end{align}
where we use the facts that $ q_i/p_i \in [\nu,1] $ and $f$ is decreasing on $[\nu,1]$.
Combining inequalities~\eqref{eq:FDiv-heavy1} and \eqref{eq:FDiv-heavy2}, we obtain
\begin{align}
I_f(\bT p, \bT q) \geq \frac{f(1/(1+\kappa))}{4f(\nu)} I_f(p,q),
\label{eq:Case1OptimalTGuarantee}
\end{align}
which implies that $\frac{I_f({p,q})}{I_f(\bT p, \bT q)} \leq \frac{4 f(\nu)}{f(1/(1+\kappa))}$,
proving the desired result. 

We now comment on the computational complexity of finding a $\bT^*$ that achieves the rate~\eqref{eq:Case1OptimalTGuarantee}. 
Since the channel $\bT^*$ only depends on $\kappa$, the algorithm only needs to check whether the threshold should be $1 + \kappa$ or $1/(1 + \kappa)$, which requires at most $\poly(k)$ operations.

\paragraph{Case $2$: Main contribution by small ratio alphabets} %
We now consider the case when $\sum_{i \in A_{1,1+\kappa} \cup A_{1/(1+\kappa), 1}} q_i f\left(\frac{p_i}{q_i}\right)\geq \frac{I_f(p,q)}{2}$. By symmetry (\ref{item:symmetry}), it suffices to consider the case when $\sum_{i \in A_{1,1+\kappa}} q_i f\left(\frac{p_i}{q_i}\right) \geq \frac{I_f(p,q)}{4}$.\footnote{There is a slight asymmetry here as well, but similar to the previous footnote, it suffices to consider this case.}
This requires us to handle the elements where $p_i$ and $q_i$ are close, and the following arguments form the main technical core of this section.

We first recall the reverse Markov inequality, whose role will become clear later in the proof:
\RevMarkovD*
For any $i \in A_{1,1+\kappa}$, both $q_i$ and $p_i$ are positive. Let $\delta_i = \frac{p_i}{q_i}-1$, which lies in $[0,\kappa)$, by definition. Then $p_i = q_i(1 + \delta_i)$.
Let $X$ be a random variable over $[0, \kappa)$ such that for $i \in A_{1,1+\kappa}$, we define $\P(X = \delta_i) = q_i$ and $\P(X = 0) = 1 - \sum_{i \in A_{1,1+\kappa}} q_i$. 
We define $\Omega$ to be the support of the random variable $X$.

We now apply \cref{lem:revMarkovD} to the random variable $ Y = X^\alpha$. Let $\beta= \kappa^\alpha$ and $R_2 = \min\{k,1 + \log(\kappa^\alpha/\E[X^\alpha])\}$. 
Let $0\leq\nu_1'\leq \cdots \le \nu'_D= \beta$ be thresholds achieving the bound \eqref{eq:revMarkovD}.
Let $\nu_j = (\nu'_j)^{1/\alpha}$ for all $j \in [D]$.
We thus have
\begin{align}
\label{eq:revMarkovReqGuarantee}
\sum_{j=1}^{D-1} \nu_j^\alpha \P(X \in [\nu_j, \nu_{j+1})) \geq \frac{1}{13} \E[X^\alpha] \min\left\{ 1, \frac{D}{R_2} \right\}.
\end{align}

We now define the thresholds $\Gamma=(\gamma_1,\dots,\gamma_{D-1})$ such that $\gamma_j = 1 + \nu_j$ for $i \in [D-1]$. We set $\gamma_0 = 0$ and $\gamma_ \infty = 0$. Recall that by definition, for $j\in[D-1]$, we have  $A_{\gamma_j,\gamma_{j+1}} = \{i : p_i/q_i \in [\gamma_j, \gamma_{j+1})\}$.
Since $1 \leq \gamma_1 \leq \gamma_{D-1} = 1 + \nu_{D-1} \leq 1 + \kappa$,  we have the following for $j \in [D-2]$:
\begin{align*}
A_{\gamma_j,\gamma_{j+1}} = 
\{i : p_i/q_i \in [\gamma_j, \gamma_{j+1})\} = \{i : \delta_i \in [\nu_j, \nu_{j+1})\}.
\end{align*}
Note that for any $j \in [D-2]$ and any function $g$, we have
\begin{align}
\label{eq:XrandVariable}
\sum_{i \in A_{\gamma_j, \gamma_{j+1}}}  g(\delta_i)q_i &= \sum_{i \in A_{\gamma_j, \gamma_{j+1}}} g(\delta_i)\P(X = \delta_i) \nonumber\\&= \sum_{x \in \Omega \cap [\nu_j, \nu_{j+1})} g(x)\P(X = x) = \E\left[ g(X) \I_{X \in [\nu_j, \nu_{j+1})}\right].
\end{align}
Using \ref{item:quad} and the fact that $0\leq \delta_i \leq \kappa$, we further have
\begin{align}
\label{eq:HellExpXsq}
\sum_{i \in A_{1,1+\kappa}} q_i f\left( \frac{p_i}{q_i} \right) &= \sum_{i \in A_{1,1+\kappa}} q_i f(1 + \delta_i) \leq \sum_{i \in A_{1,1+\kappa}} C_2q_i \delta_i^\alpha = C_2\E[X^\alpha],
\end{align}
where the last equality uses the same arguments as in inequality \eqref{eq:XrandVariable}.
Finally, we note that inequality \eqref{eq:HellExpXsq} and the assumption $I_f(p,q) \leq 4 \sum_{i \in A_{1,1+\kappa}} q_i f(p_i/q_i)$ implies that
\begin{align}
I_f(p,q) & \leq 4C_2 \E[X^\alpha], \qquad \text{and} \nonumber\\
R_2 & \leq \min\{k,1 + \log(4C_2\kappa^\alpha/I_f(p,q))\} = R.
\label{eq:FdivAndXsquare}
\end{align}

We use $p'$ and $q'$ to denote the probability measures $\bT p$ and $\bT q$, 
respectively, where $\bT$ corresponds to the thresholds $\Gamma$.
Thus, for $j \in [0:D-1]$, we have $p'(j) = \sum_{i \in A_{\gamma_j, \gamma_{j+1}}} p_i$; we have an analogous expression for $q'(j)$.
We now define the positive measure $p''$, as follows:
\begin{equation*}
p''_j = \begin{cases}
\sum_{i \in A_{\gamma_j, \gamma_{j+1}}}p_i, & \text{for } j \in [0: {D-2}], \\
\sum_{i \in A_{\gamma_{D-1}, 1 + \kappa}}p_i, & \text{for } j = D-1,
\end{cases}
\end{equation*}
and define $q''$ similarly. 
Recall that $\gamma_{D-1} = 1 + \nu_{D-1} \leq 1 + \kappa$.
Equivalently, we have $p''_j:= \sum_{i \in A_{\gamma_j, \min\left\{\gamma_{j+1}, 1 + \kappa\right\}}}p_i$ for each $j \in [0:D-1]$, since $\gamma_D = \infty$.
 Note that $p''$ and $q''$ might not be probability measures, as their sums might be smaller than 1, but they are equal to $p'$ and $q'$, respectively, on all elements except the last.
 { Moreover, we may define the ``$f$-divergence'' between $p''$ and $q''$ by mechanically applying the standard expression for $f$-divergence, but replacing the probability measures by $p''$ and $q''$, instead.} The $f$-divergence between $p''$ and $q''$, thus obtained, is smaller than the $f$-divergence between $p'$ and $q'$, since
\begin{align*}
q'_{D-1}  f\left( \frac{p'_{D-1}}{q'_{D-1}} \right) \geq  q''_{D-1}  f\left( \frac{p''_{D-1}}{q''_{D-1}} \right),
\end{align*}
which follows by noting that $q''_{D-1} \leq q'_{D-1}$, $p'_{D-1}/q'_{D-1} \geq p''_{D-1}/q''_{D-1} \geq 1$, and $f(x) \geq f(y) \geq 0$ for any $x \geq y \geq 1$.\new{\footnote{We briefly outline how $p'_{D-1}/q'_{D-1} \geq p''_{D-1}/q''_{D-1}$: Let $p'_{D-1} = p''_{D-1} + x$ and $q'_{D-1} = q''_{D-1} + y$ for $x,y \in \R$. By construction, we have that $x,y \geq 0$ and $x/y \geq 1 + \kappa \geq p''_{D-1}/q''_{D-1}$. Expanding $p'_{D-1}/q'_{D-1} - p''_{D-1}/q''_{D-1}$, we get the desired conclusion.}}
 We thus obtain the following relation:
\begin{align}
\label{eq:f-DivRestricted}
I_f(p',q')  \geq \sum_{j=0}^{D-1} q''_j f \left( \frac{p''_j}{q''_j}\right).
\end{align}

Fix $j \in [D-1]$. Using the facts that $ 0 \leq \frac{p''_j}{q''_j} - 1 \leq \kappa$ and $f(1 + x) \geq C_1 x^\alpha$ for $x \in [0,\kappa]$ (cf.\ \ref{item:quad}), we have the following for any $j$ such that $q''_j > 0$:
\begin{align}
\nonumber q''_j f \left( \frac{p''_j}{q''_j}\right) &= q''_j f \left(1 +  \frac{p''_j - q''_j}{q''_j}\right)
 \\
 \nonumber &\geq C_1 q_j'' \left( \frac{p''_j - q''_j}{q''_j}  \right)^\alpha\\
\nonumber   &= C_1 q_j'' \left(\frac{\sum_{i \in A_{\gamma_j, \min\left\{\gamma_{j+1}, 1 + \kappa\right\}}} q_i \delta_i}{\sum_{i \in A_{\gamma_j, \min\left\{\gamma_{j+1}, 1 + \kappa\right\}}} q_i} \right)^\alpha \\
\nonumber &\geq C_1 q_j''\nu_j^ \alpha && \left( \text{using $\delta_i \geq \nu_j$ for $i \in A_{\gamma_j, \gamma_{j+1}}$} \right)\\
&\geq C_1\nu_j^\alpha \P(X \in [\nu_j , \nu_{j+1})).  \label{eq:f-DivquadLowerBound}
\end{align}
We note that this inequality is also true if $q''_j = 0$, because $q''_j = \P\left( X \in [\nu_j , \nu_{j+1}) \right)$, and if the former is zero, then the expression in inequality \eqref{eq:f-DivquadLowerBound} is also zero, while $q''_j f \left(\frac{p''_j}{q''_j}\right)$ is nonnegative. 

Overall, we obtain the following series of inequalities:
\begin{align*}
I_f(p',q') &\geq \sum_{j=1}^{D-1} q''_j f \left( \frac{p''_j}{q''_j}\right) && \left( \text{using inequality \eqref{eq:f-DivRestricted} and $f \geq 0$} \right)\\
&\geq C_1 \sum_{j=1}^{D-1} \nu_j^\alpha \P(X \in [\nu_j , \nu_{j+1})) && \left( \text{using inequality \eqref{eq:f-DivquadLowerBound}} \right)\\
&\geq \frac{C_1}{13} \E[X^\alpha] \min\left\{1, \frac{D}{R_2} \right\} && \left( \text{using inequality \eqref{eq:revMarkovReqGuarantee}} \right)
\\&\geq \frac{C_1}{52 C_2} I_f(p,q) \min\left\{1, \frac{D}{R} \right\} && \left( \text{using inequality \eqref{eq:FdivAndXsquare}} \right).
\end{align*}
This shows that there exists a $\bT \in \cTT_D$ such that
\begin{align}
\frac{I_f(p,q)}{I_f(\bT p, \bT q)} \leq \frac{52C_2}{C_1} \max\left\{ 1 , \frac{R}{D} \right\}.
\label{eq:Case2OptimalTGuarantee}
\end{align}

 We now comment on the computational complexity of finding a $\bT^*$ that achieves the rate \eqref{eq:Case2OptimalTGuarantee}. Finding the thresholds $\Gamma$ is equivalent to finding  $(\nu'_1,\dots,\nu'_{D-1})$. As noted in \cref{lem:revMarkovD} and its proof, the guarantee of inequality \eqref{eq:revMarkovD} can be achieved by choosing $\nu'_j$ in one of the following ways: 
 \begin{itemize}
  \item Setting $\nu_j' = \min\{\kappa^\alpha, x 2^j\}$ for all $j$ and optimizing over $x$. As the random variable $Y$ has support of at most $k$, this algorithm runs in $\poly(k,D)$-time.

  \item  Choosing the top $D-1$ elements that maximize $\delta_iq_i$, and defining $\nu'_j$ appropriately. 
\qedhere
 \end{itemize} 
 \end{proof}

\subsection{Tightness of reverse data processing inequality: {Proof of \cref{lem:HellTight}}}
\label{app:tightRevDatProc}
\RevMarkIneqHell*

\begin{proof}

We will design $p$ and $q$ such that $p_i /q_i \in [0.5,1.5]$ for all $i$.
Fix any set of thresholds $\Gamma = \{\gamma_1, \dots, \gamma_{D-1}\}$ which, without loss of generality, lie in $[0.5,1.5]$.
Let $\bT$ be the corresponding channel.
Let $p'$ and $q'$ be the distributions after using the channel $\bT$.

Note that $k$ will depend on $\rho$, which will be decided later.
For now, let $k$ be even and equal to $2m$.
Let $\tilde{q}$ be an arbitrary distribution on $[m]$, to be decided later.
Using this $\tilde{q}$, we define a distribution $q$ on $[k]$, as follows:
\begin{align*}
q_i = \begin{cases} 0.5\tilde{q}_i, & \text{if } i \in [m] \\
0.5\tilde{q}_{i-m}, & \text{if } i \in [k]\setminus [m]. \end{cases}
\end{align*}
Let $\tilde{\delta} \in [0,0.5]^{m}$, also
to be decided later.
Using $\tilde{\delta}$, we define $\delta$, as follows:
\begin{align*}
\delta_i = \begin{cases} \tilde{\delta}_i, & \text{if } i \in [m] \\
-\tilde{\delta}_{i-m}, & \text{if } i \in [k]\setminus [m]. \end{cases}
\end{align*}
We now define $p$ as follows:
For $i \in [k]$, define $p_i = q_i(1 + \delta_i)$. Equivalently,
\begin{align*}
p_i = \begin{cases} 0.5\tilde{q}_i(1 + \tilde{\delta}_i), & \text{if } i \in [m] \\
0.5\tilde{q}_{i-m}(1 - \tilde{\delta}_{i-m}), & \text{if } i \in [k]\setminus [m]. \end{cases}
\end{align*}
Thus, $p$ is a valid distribution if $q$ is a valid distribution. 
 Let  $\tilde{X}$ be the random variable such that $\P\{\tilde{X} = \tilde{\delta}_i\} = \tilde{q}_i$. 
We will need the following results, whose proofs are given at the end of this section:

\begin{claim}
\label{cl:hellExp}
We have the following inequality:
 \begin{align*}
0.02 \E[ \tilde{X}^2] \le \hel^2(p,q) \le \E[ \tilde{X}^2].
 \end{align*}

\end{claim}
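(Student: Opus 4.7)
The plan is to expand $\hel^2(p,q)$ directly from the definition, using the explicit construction of $p$, $q$, and $\delta$, and then bound the resulting scalar expression via elementary Taylor-type estimates on $(\sqrt{1+x}-1)^2$ for $|x| \le 1/2$.

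First I would write
\begin{align*}
\hel^2(p,q) \;=\; \sum_{i=1}^{k} q_i\bigl(\sqrt{1+\delta_i}-1\bigr)^2,
\end{align*}
and then split the sum according to whether $i\in[m]$ or $i\in[k]\setminus[m]$. By the definitions of $q_i$ and $\delta_i$, this rearranges to
\begin{align*}
\hel^2(p,q) \;=\; \tfrac{1}{2}\sum_{i=1}^{m} \tilde q_i\Bigl[(\sqrt{1+\tilde\delta_i}-1)^2+(\sqrt{1-\tilde\delta_i}-1)^2\Bigr].
\end{align*}

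The next step is the elementary two-sided bound: for every $x\in[-\tfrac12,\tfrac12]$,
\begin{align*}
\tfrac{1}{5}x^2 \;\le\; (\sqrt{1+x}-1)^2 \;\le\; x^2.
\end{align*}
This follows from the identity $\sqrt{1+x}-1=\tfrac{x}{\sqrt{1+x}+1}$, since $\sqrt{1+x}+1\in[1+\sqrt{1/2},\,1+\sqrt{3/2}]\subseteq[1,\sqrt{5}]$ on this interval (the upper bound $(1+\sqrt{3/2})^2=2.5+2\sqrt{1.5}<5$ is the only numerical check needed). Applying this bound to $x=\pm\tilde\delta_i$ (note $\tilde\delta_i\in[0,1/2]$, so both $1\pm\tilde\delta_i\in[1/2,3/2]$) yields
\begin{align*}
\tfrac{2}{5}\tilde\delta_i^{\,2}\;\le\;(\sqrt{1+\tilde\delta_i}-1)^2+(\sqrt{1-\tilde\delta_i}-1)^2\;\le\; 2\tilde\delta_i^{\,2}.
\end{align*}

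Plugging this into the earlier display and recognizing that $\sum_i \tilde q_i\,\tilde\delta_i^{\,2} = \E[\tilde X^2]$ by the definition of $\tilde X$, one obtains $\hel^2(p,q)\in[\tfrac{1}{5}\E[\tilde X^2],\,\E[\tilde X^2]]$, which comfortably implies the claimed bound $0.02\,\E[\tilde X^2]\le \hel^2(p,q)\le \E[\tilde X^2]$. There is no genuine obstacle here: the only subtle point is confirming that $\tilde\delta_i\in[0,1/2]$ so that the Taylor-type estimate applies uniformly; this is guaranteed by the hypothesis $\tilde\delta\in[0,1/2]^m$ made in the construction.
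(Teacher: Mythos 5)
Your proof is correct and follows essentially the same route as the paper: both expand $\hel^2(p,q)$ using the mirrored construction of $p$ and $q$ to reduce to $\tfrac{1}{2}\sum_{i\in[m]}\tilde q_i\bigl[(\sqrt{1+\tilde\delta_i}-1)^2+(\sqrt{1-\tilde\delta_i}-1)^2\bigr]$, and then bound each summand above and below by constant multiples of $\tilde\delta_i^2$. Your unified two-sided estimate $\tfrac{1}{5}x^2\le(\sqrt{1+x}-1)^2\le x^2$ for $|x|\le 1/2$ via the rationalization $\sqrt{1+x}-1=x/(\sqrt{1+x}+1)$ is a cleaner and sharper substitute for the paper's four separate elementary inequalities (and incidentally sidesteps a minor slip there: the stated bounds $\sqrt{1+x}-1\ge 0.1x$ and $1-\sqrt{1-x}\ge 0.1x$ only yield the constant $0.01$, not the claimed $0.02$, though the claim itself is true).
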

\begin{claim}
\label{cl:hellExpGamma}
Let $\bT\in \cTT_D$ be a channel corresponding to a threshold test. Then
\begin{align}
\label{eq:LbHellUpperBd}
\hel^2(\bT p,&\bT q) \leq \sup_{0< \nu'_1< \dots< \nu'_D = 1}  \sum_{j=1}^{D-1} \P\left\{ \tilde{X} \geq \nu'_j] \right\} \left( \E\left[ \tilde{X} | \tilde{X} \geq \nu'_j \right] \right)^2.
\end{align}
\end{claim}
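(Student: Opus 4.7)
The plan is to bound $\hel^2(\bT p, \bT q)$ bin-by-bin, reduce each bin contribution to a tail expression in $\tilde{X}$, and then count the number of distinct tail thresholds that arise. Let $X = \delta_I$ for $I \sim q$: by construction $X$ is symmetric around $0$ with $\P\{X = \tilde{\delta}_i\} = \P\{X = -\tilde{\delta}_i\} = \tfrac{1}{2}\tilde{q}_i$, so $|X|$ is distributed as $\tilde{X}$. Since each $p_i/q_i \in [0.5, 1.5]$, the thresholds $\gamma_1 < \cdots < \gamma_{D-1}$ of $\bT$ translate to shifted thresholds $\nu_j := \gamma_j - 1 \in [-0.5, 0.5]$, inducing bins $J_1, \ldots, J_D$ on the $X$-axis. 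Writing $P_j = \sum_{i \in A_j} p_i$, $Q_j = \sum_{i \in A_j} q_i$, and $S_j := P_j - Q_j = \E[X \I_{X \in J_j}]$, the identity $(\sqrt{P_j} - \sqrt{Q_j})^2 = S_j^2/(\sqrt{P_j} + \sqrt{Q_j})^2$ combined with $P_j/Q_j \in [0.5, 1.5]$ yields $\hel^2(\bT p, \bT q) \lesssim \sum_j S_j^2/Q_j$.

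For each bin strictly contained in the positive half-line (i.e., $J_j \subseteq [0, 0.5]$), the pairing symmetry of $X$ gives $S_j = \tfrac{1}{2}\E[\tilde{X}\,\I_{\tilde{X} \in J_j}]$ and $Q_j = \tfrac{1}{2}\P\{\tilde{X} \in J_j\}$, whence $S_j^2/Q_j = \tfrac{1}{2}\P\{\tilde{X} \in J_j\}(\E[\tilde{X}\,|\,\tilde{X} \in J_j])^2$. The analogous relation holds for negative bins after replacing $J_j$ with its mirror image in $[0, 0.5]$, and the at-most-one bin straddling zero produces an expression of the same form over its residual interval after cancellation of the symmetric part. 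I then apply an elementary bin-to-tail inequality: for any interval $[a, b) \subseteq [0, 0.5]$, both $\P\{\tilde{X} \in [a, b)\} \leq \P\{\tilde{X} \geq a\}$ and $\E[\tilde{X}\,|\,\tilde{X} \in [a, b)] \leq \E[\tilde{X}\,|\,\tilde{X} \geq a]$ (conditioning on the tail averages in larger values), so $\P\{\tilde{X} \in [a, b)\}(\E[\tilde{X}\,|\,\tilde{X} \in [a, b)])^2 \leq \P\{\tilde{X} \geq a\}(\E[\tilde{X}\,|\,\tilde{X} \geq a])^2$. This re-expresses each bin's contribution as a tail term at a threshold drawn from $\{-\nu_j : \nu_j < 0\} \cup \{\nu_j : \nu_j > 0\}$, a set of size at most $D - 1$.

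Summing over the $D$ bins and taking the supremum over these at most $D - 1$ thresholds establishes the claim. The main obstacle is careful bookkeeping: the factor of $\tfrac{1}{2}$ that appears when passing from $X$ to $\tilde{X}$ leaves a residual $\tfrac{1}{2}$ in each bin contribution, which must be absorbed into a full tail term. The key observation is that each tail threshold in the set above is used by at most two bins (the mixed bin straddling zero shares its positive endpoint with its adjacent positive bin), so the two $\tfrac{1}{2}$-factors combine to a single full tail term per shared threshold, absorbing the constant. Handling this together with the case split on whether some $\nu_j = 0$ (so there is no mixed bin) versus zero lying strictly inside some $J_j$ is the principal bookkeeping required to conclude.
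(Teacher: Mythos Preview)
Your approach is essentially the same as the paper's. Both arguments proceed by (i) bounding each bin contribution $(\sqrt{P_j}-\sqrt{Q_j})^2$ by $(P_j-Q_j)^2/Q_j$; (ii) expressing $P_j-Q_j$ and $Q_j$ as $\tfrac12$ times bin-probabilities and bin-expectations of $\tilde X$; and (iii) replacing each bin term $\P\{\tilde X\in[a,b)\}\bigl(\E[\tilde X\mid \tilde X\in[a,b)]\bigr)^2$ by the tail term $\P\{\tilde X\ge a\}\bigl(\E[\tilde X\mid \tilde X\ge a]\bigr)^2$. The paper organizes this via the case split $j\ge j^*$ versus $j<j^*$ (positive vs.\ negative likelihood-ratio bins), whereas you package the same split as the sign of the symmetric variable $X=\delta_I$ with $|X|\stackrel{d}{=}\tilde X$; these are notational variants of one argument.

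Two small remarks on your bookkeeping. First, your ``$\lesssim$'' in step two can in fact be ``$\le$'': from $(\sqrt{P_j}+\sqrt{Q_j})^2\ge Q_j$ one gets $(\sqrt{P_j}-\sqrt{Q_j})^2\le (P_j-Q_j)^2/Q_j$ with no constant, exactly as the paper obtains via $\sqrt{1+x}-1\le x$. Second, your accounting for the residual $\tfrac12$ is correct but slightly overstated: the straddling bin's tail threshold is $\min(\nu_{j^*},-\nu_{j^*-1})$, which coincides with the threshold of \emph{either} the adjacent positive bin \emph{or} the adjacent negative bin (not necessarily the positive one). Either way, exactly one of the $D-1$ thresholds $\{|\gamma_j-1|:j\in[D-1]\}$ is used twice and the rest once, so the $D$ half-terms collapse into at most $D-1$ full tail terms, which is what the claim needs. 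The paper handles this same collapse implicitly when it writes ``combining the two inequalities'' and passes to the supremum.
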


We will now show that there exist $p$ and $q$ (i.e., $\tilde{q} \in \R^{m}$ and $\tilde{\delta} \in \R^{m}$) such that the desired conclusion holds.
Defining $\tilde{q}$ and $\tilde{\delta}$ is equivalent to showing the existence of a random variable $\tilde{X}$ satisfying the desired properties. 
This was precisely stated in \cref{claim:ReverseMarkovTight}, 
 showing that there exists a distribution $\tilde{X}$ such that the following hold: (i) $\E[ \tilde{X}^2] = \Theta(\rho)$; (ii) the expression on the right-hand side of inequality \eqref{eq:LbHellUpperBd}, for any choice of thresholds $\Gamma$, is upper-bounded by a constant multiple of  $\frac{\E [\tilde{X}^2] D }{R'}$; and (iii) $R' = \max\{m, k'\} = \Theta(\log(1/\rho))$.

Using \cref{cl:hellExp,cl:hellExpGamma,claim:ReverseMarkovTight}, we obtain the following for any threshold channel $\bT$:
\begin{align*}
\hel^2( \bT p, \bT q) &\lesssim \E[\tilde{X}^2] \frac{D}{R'} \lesssim \hel^2(p,q) \frac{D}{R'},
\end{align*}
completing the proof.
\end{proof}

The omitted proofs of \cref{cl:hellExp} and \cref{cl:hellExpGamma} are given below.
\begin{proof}[Proof of \cref{cl:hellExp}]
We have the following:
\begin{align*}
\hel^2(p,q) &= \sum_{i \in [m]} \left( \sqrt{q_i \left( 1 + \delta_i \right)} - \sqrt{q_i} \right)^2 + \sum_{i \in [k]\setminus[m]} \left( \sqrt{q_i} - \sqrt{q_i \left( 1 + \delta_i \right)}  \right)^2
 \\
&= 0.5\sum_{i \in [m]} \left( \sqrt{\tilde{q}_i \left( 1 + \tilde{\delta}_i \right)} - \sqrt{ \tilde{q}_i} \right)^2 + 0.5\sum_{i \in [m]} \left( \sqrt{ \tilde{q}_i} - \sqrt{\tilde{q}_i \left( 1 - \tilde{\delta}_i \right)}   \right)^2
 \\
&= 0.5\sum_{i \in [m]} \tilde{q}_i \left( \left( \sqrt{ 1 + \tilde{\delta}_i} - 1 \right)^2 + \left( 1 - \sqrt{  1 - \tilde{\delta}_i }   \right)^2\right).
 \end{align*}
Using the fact that for $x \in [0,1]$, we have
\begin{align*}
\sqrt{1 + x} -1 & \geq 0.1x, \\
1 - \sqrt{1 - x} & \geq 0.1 x, \\
\sqrt{1 + x} & \leq 1 + x, \\
1 - x & \leq \sqrt{1 - x},
\end{align*}
we obtain
\begin{align*}
\E[ \tilde{X}^2]\geq  \hel^2(p,q) \geq  0.02 \E[ \tilde{X}^2].
 \end{align*}
\end{proof}

\begin{proof}[Proof of \cref{cl:hellExpGamma}]
Suppose $\bT$ corresponds to a threshold test with thresholds $\Gamma = \{\gamma_1, \dots , \gamma_{D-1} \}$ such that $\gamma_j < \gamma_{j+1}$. We define $ \gamma_0 = \min_i p_i/q_i$ and $\gamma_D = \max_i p_i/q_i$.
It suffices to consider the case when all $\gamma_j \in [0.5,1.5]$ for $j \in [0:D]$.
Let $p' = \bT p$ and $q' = \bT q$.
Let $j^* \in [D-1]$ be such that $\gamma_{j^*-1} < 1 $ and $\gamma_{j^*} \geq 1$. 

We now define the $\nu_j$'s as follows, for $j \in [0:D-1]$: 
\begin{align*}
\nu_j = \begin{cases} 
    \gamma_j - 1, & \text{ if } j \geq j^*\\
    1 - \gamma_j, & \text{ otherwise.} \end{cases}
\end{align*}
Thus, $\nu_j \in [0,1)$.

\noindent For $j \in [0:D-1]$, define
$$A_j := \left\{i \in [k]: (p_i/q_i) \in [\gamma_{j}, \gamma_{j+1})\right\} = \left\{i: 1 + \delta_i \in [\gamma_{j}, \gamma_{j+1})\right\}.$$
For $j \geq j^*$, we have $A_j = \left\{ i : \delta_i \in [\nu_{j}, \nu_{j+1}) \right\}$.
For $j < j^*$, we have
$$A_j = \left\{ i \in [k] : -\delta_i \in (\nu_{j+1}, \nu_{j}] \right\} = \left\{ i \in [k] : \tilde{\delta}_{i - m} \in (\nu_{j+1}, \nu_{j}] \right\}.$$
For $j \in [0:D-1]$, we have  $p'_j = \sum_{i \in A_j} p_i$ and $q'_j = \sum_{i \in A_j} q_i$.

We have the following decomposition of the squared Hellinger distance between $p'$ and $q'$:
\begin{align}
\label{eq:SimLBTaylor1}
\hel^2(p',q') = \sum_{j < j^*} \left(  \sqrt{p'_j} - \sqrt{q'_j}\right)^2 + \sum_{j \geq j^*}\left(  \sqrt{p'_j} - \sqrt{q'_j}\right)^2
\end{align}

 We analyze these two terms separately:
\paragraph{Case $1$: $j \geq j^*$}
Let $j$ be such that $q_j' > 0$. We have $p'_j \in [q'_j, 1.5 q'_j]$. Using the fact that $\sqrt{1 + x} - 1 \leq  x$ for $x \in [0, 0.5]$, we have
\begin{align}
\label{eq:SimLBTaylor2}\left(  \sqrt{p'_j} - \sqrt{q'_j}\right)^2 
&= q'_j\left( \sqrt{1 + \frac{p'_\gamma - q'_j}{q'_j}}  - 1 \right)^2  \leq \frac{(p'_i - q'_j)^2}{q'_j}.
\end{align}
Since $\gamma_{j}\geq 1$, note that 
\begin{align*}
q'_j &= \sum_{i \in A_j} q_i = \sum_{i \in [m]: \tilde{\delta}_i \in [\nu_j, \nu_{j+1})} q_i = \sum_{i \in [m]: \tilde{\delta}_i \in [\nu_j, \nu_{j+1})} 0.5 \tilde{q}_i= 0.5 \P\{ \tilde{X} \in [\nu_j, \nu_{j+1}  )\}.
\end{align*}
Similarly, we have
\begin{align*}
p'_j - q'_j &= \sum_{i \in A_j} \delta_i q_i = \sum_{i \in [m]: \tilde{\delta}_i \in [\nu_j, \nu_{j+1})} \delta_iq_i = 0.5 \sum_{i \in [m]: \tilde{\delta}_i \in [\nu_j, \nu_{j+1})} \tilde{\delta}_i \tilde{q}_i  = 0.5 \E\left[ \tilde{X} \I_{\tilde{X} \in [\nu_j, \nu_{j+1})} \right].
\end{align*}

Combining the last two displayed equations with inequality \eqref{eq:SimLBTaylor2} and using the definition of conditional expectation, we then obtain
\begin{align}
\sum_{j \geq j^*} \left(  \sqrt{p'_j} - \sqrt{q'_j}\right)^2 &\leq 0.5  \sum_{j \geq j^*} \P\left\{ \tilde{X} \in [\nu_j, \nu_{j+1}) \right\} \left( \E\left[ \tilde{X} | \tilde{X} \in [\nu_j, \nu_{j+1}) \right] \right)^2\nonumber\\
&\leq 0.5  \sum_{j \geq j^*} \P\left\{ \tilde{X} \geq \nu_j \right\} \left( \E\left[ \tilde{X} | \tilde{X} \geq  \nu_j) \right] \right)^2.
\label{eq:SimLBTaylor3}
\end{align}

\textbf{Case $2: j < j^*$:} Let $j < j^*$ be such that $q_j' > 0$. We have $p'_j \in [q'_j/2, q'_j)$. Using the fact that $1 - \sqrt{1 - x} \leq x$ for $x \in [0,1]$, we have 
\begin{align}
\label{eq:SimLBTaylor4}
\left(  \sqrt{q'_j} - \sqrt{p'_j}\right)^2 = q'_j\left(  1 - \sqrt{1 - \frac{q'_j - p'_i}{q'_j}}\right)^2 \leq \frac{(q'_j - p'_i)^2}{q'_j}.
\end{align}
Since $\gamma_{j} < 1$, we have
\begin{align*}
q'_j &= \sum_{i \in A_j} q_i = \sum_{i \in [k]\setminus [m]: \tilde{\delta}_{i-m} \in (\nu_{j+1}, \nu_{j}]} q_i = \sum_{i \in [k]\setminus [m]: \tilde{\delta}_{i-m} \in (\nu_{j+1}, \nu_{j}]} 0.5 \tilde{q}_{i-m} = 0.5 \P\{ \tilde{X} \in (\nu_{j+1}, \nu_{j}  ]\}.
\end{align*}
Similarly, we have
\begin{align*}
q'_j - p'_i &= \sum_{i \in A_j} (-\delta_i q_i) = \sum_{i \in [k]\setminus [m]: \tilde{\delta}_{i-m} \in (\nu_{j+1}, \nu_{j}]} \tilde{\delta}_i(0.5 \tilde{q}_{i-m}) = 0.5 \E\left[ \tilde{X} \I_{\tilde{X} \in (\nu_{j+1}, \nu_{j}]} \right].
\end{align*}
Combining the last two displayed equations with inequality \eqref{eq:SimLBTaylor4} and using the definition of conditional expectation, we then obtain
\begin{align}
\nonumber
\sum_{j < j^*} \left(  \sqrt{p'_j} - \sqrt{q'_j}\right)^2 &\leq  0.5\sum_{j < j^*} \P\left\{ \tilde{X} \in (\nu_{j+1}, \nu_{j}] \right\} \left( \E\left[ \tilde{X} | \tilde{X} \in (\nu_{j+1}, \nu_{j}] \right] \right)^2 \\
&\leq  0.5\sum_{j < j^*} \P\left\{ \tilde{X}  > \nu_j] \right\} \left( \E\left[ \tilde{X} | \tilde{X} > \nu_j] \right] \right)^2.
\label{eq:SimLBTaylor5}
\end{align}
Combining inequalities \eqref{eq:SimLBTaylor3} and \eqref{eq:SimLBTaylor5}, we can complete the proof by noting that $\tilde{X}$ is a discrete random variable, so the distinction between $\tilde{X} \geq \nu_j$ (cf.\ inequality \eqref{eq:SimLBTaylor3}) and $\tilde{X} > \nu_j$ (cf.\ inequality \eqref{eq:SimLBTaylor5}) does not matter when taking the supremum.
\end{proof}

\section{Simple robust binary hypothesis testing} %
\label{app:hypothesis_testing}

In this section, we prove results concerning (robust) binary hypothesis testing that were omitted from \Cref{sec:BinHypTest}.
We begin by providing another explicit example that shows that (i) the robust sample complexity has phase transitions with respect to the amount of corruption, and (ii) the sample complexity of Scheffe's test can be strictly suboptimal.
\begin{example}[Sample complexity of $\cB_\robust(\{p,q\},\cdot)$]
\label{exm:robust-2} Consider $0< \alpha < \beta < \delta < 1$, satisfying $ \delta < 2 \beta - \alpha$. Let $p$ and $q$ be the following two distributions:
\begin{align*}
p & := \left(1/2 - 2\epsilon - \epsilon^{1 + \alpha} + \epsilon^{1 + \beta} - \epsilon^{1 + \delta}, 1/2 + 2\epsilon, \epsilon^{1 + \alpha} - \epsilon^{1 + \beta}, \epsilon^{1 + \delta}\right), \\
q & := \left(1/2,1/2- \epsilon^{1 + \alpha}, \epsilon^{1 + \alpha}, 0\right),
\end{align*}
where $\epsilon \leq 0.01$.
Note that $\dtv(p,q) = \Theta(\epsilon)$.
We have $\nstar(\{p,q\}) = 	\Theta\left(1/\epsilon^{1 + \delta}\right)$. For any fixed $\gamma > 0$, the sample complexity $\nstar_\robust(\{p,q\}, \epsilon^{1 + \gamma})$ satisfies the following growth condition for $\epsilon$ small enough, where we omit constant factors for brevity: 
    \begin{align*}
    \nstar_\robust(\{p,q\}, \epsilon^{1 + \gamma}) = \begin{cases} \frac{1}{\epsilon^{1 + \delta}}, & \text{if }\gamma  > \delta\\
    \frac{1}{\epsilon^{1 + 2 \beta - \alpha}}, & \text{if }\gamma \in (\beta, \delta)\\        
    \frac{1}{\epsilon^{2}}, & \text{if }\gamma  \in (0 ,\beta).
    \end{cases}
    \end{align*}
    By \Cref{thm:ub-Simple-D-robust}, the sample complexity under communication constraints of $D = 2$ messages satisfies the same result up to constants (note that the optimal channel may change with respect to $\gamma$). 
  On the other hand, for all $\gamma > 0$, the sample complexity of Scheffe's test for $\cB_\robust(\{p,q\},\epsilon^{1 + \gamma})$ is $\Theta(1/\epsilon^2)$.  
  \end{example}
  \begin{proof}
Note that
$$\hel^2(p,q) = \Theta\left(\epsilon^2 + \epsilon^{2 + 2\beta - 1 - \alpha} + \epsilon^{1 + \delta}\right) = \Theta\left(\epsilon^{1 + 2\beta - \alpha} + \epsilon^{1 + \delta}\right) = \Theta\left(\epsilon^{1 + \delta}\right),$$
since $\delta < 2 \beta - \alpha$, which leads to the claim on $\nstar(\{p,q\})$ by \Cref{fact:testing}.

The lower bounds on $\nstar_\robust(\{p,q\},\epsilon^{1 + \gamma})$ follow by applying \Cref{fact:testing} on the following choices of $\tilde{p}$  and $\tilde{q}$, which lie within $\epsilon^{1 + \gamma}$ in total variation distance: 
\begin{enumerate}
  \item $\gamma > \delta$: This follows directly by choosing $\tilde{p} = p$ and $\tilde{q} = q$.
  \item $\gamma \in (\beta,\delta)$: This follows by choosing $\tilde{p} = p$ and $\tilde{q} = (1/2 -\epsilon^{1+ \delta}, 1/2 - \epsilon^{1 + \alpha}, \epsilon^{1 + \alpha}, \epsilon^{1+ \delta})$.
  \item $\gamma \in (0,\beta)$: This follows by choosing $\tilde{p} = p$ and
  $$\tilde{q} = (1/2 -\epsilon^{1+ \delta} - \epsilon^{1 + \beta}, 1/2 - \epsilon^{1 + \alpha}, \epsilon^{1 + \alpha} - \epsilon^{1 + \beta}, \epsilon^{1+ \delta}).$$
\end{enumerate}
We now discuss the channels that achieve the upper bound. 
We will choose $\bT$ corresponding to $\I_A(\cdot)$, as follows:
\begin{enumerate}
   \item $\gamma > \delta$: Take $A = \{4\}$. Then $\E_{\tilde{p}} (A) \geq 2\epsilon^{1 + \delta}/3$ and $\E_{\tilde{q}} (A) \leq \epsilon^{1 + \delta}/3$. As mentioned later, this can be tested with $O(1/ \epsilon^{1 + \delta})$ samples.
   \item $\gamma \in (\beta,\delta)$: Take $A = \{3\}$. Then $\E_{\tilde{p}} (A) \leq \epsilon^{1 + \alpha} -  2\epsilon^{1 + \beta}/3$ and $\E_{\tilde{q}} (A) \geq \epsilon^{1 + \alpha} - \epsilon^{1 + \beta}/3$. As mentioned later, this can be tested with $O(1/ \epsilon^{1 + 2 \beta - \alpha})$ samples.
   \item $\gamma \in (0, \beta)$: This follows by taking $A = \{2\}$ and using similar arguments as above.
 \end{enumerate} 

Finally, we prove that the sample complexity of Scheffe's test is $\Theta(1/\epsilon^2)$. Scheffe's test transforms $p$ and $q$ to  Bernoulli distributions, with probabilities of observing $1$ equal to $\left( 1/2 + 2 \epsilon + \epsilon^{1+\delta} \right)$ and $\left( 1/2 - \epsilon^{1 + \alpha} \right)$, respectively.
It is easy to see that the Hellinger distance between these two Bernoulli distributions is $\Theta(\epsilon^2)$, implying that the sample complexity of Scheffe's test is $\Theta(1/\epsilon^2)$. Its robustness to $\epsilon^{1 + \gamma}$-corruption follows from similar arguments as above.

For completeness, we outline the typical concentration argument that is needed to perform the tests above. Let $X$ be a mean of i.i.d.\ indicator random variables, i.e., $X = (\sum_i Y_i)/n$,  where $Y_i \in \{0,1\}$ and $\E[Y_i] = \mu \leq 1/2$. Then $\E[X] = \mu$ and $\Var(X) \leq \mu/n$. Chebyshev's inequality implies that with probability $0.01$, we have $X \in [\mu - 10\sqrt{\mu/n}, \mu + 10 \sqrt{\mu/n}]$.
Thus, if $n \geq 10^4/ \mu $, then with probability $0.01$, we have $X \in [2\mu/3, 4 \mu/ 3]$. By similar logic, if $n \geq 10^4 / \delta^2$, then with probability $0.01$, we have $X \in [\mu - \delta , \mu + \delta]$.
\end{proof}

Finally, we provide additional details regarding \Cref{exm:robust-1}. \\
\textbf{Details regarding \Cref{exm:robust-1}:} %
Consider the set $A = \{2\}$. We have $p(A) = 0.5+3 \epsilon $ and $q(A) = 0.5$. Any valid $\tilde{p}$ and $\tilde{q}$ lying within $\epsilon$ in total variation distance of $p$ and $q$, respectively, satisfy $\tilde{p}(A) \geq 0.5 + 2 \epsilon$ and $\tilde{q}(A) \leq 0.5 + \epsilon$.
Thus, estimating the mean of $\I_A(X)$ up to error $\epsilon/2$ gives a valid test, which takes $O(1/\epsilon^2)$ samples, by the arguments outlined above.
The lower bound follows by applying \Cref{fact:testing} to $\cB(\tilde{p}, \tilde{q})$, where $\tilde{p} = (0.5 - 3 \epsilon, 0.5 + 3 \epsilon, 0)$ and $\tilde{q} = (0.5,0.5,0)$.
It can be seen that for this choice of $\tilde{p}$, we have $\bT^* \tilde{p} = (1, 0)$ and $\bT^* q = (1,0)$.

\section{Lower bounds for $M$-ary hypothesis testing} %
\label{app:lower_bounds_for_m-ary}

In this section, we provide the proof of \cref{thm:Adaptive-lb}.
We prove the two bounds in \cref{thm:Adaptive-lb} separately: the $\Omega(M)$ lower bound from the strong data processing inequality is proved in \Cref{app:strong_data_processing} (cf.\ \Cref{cor:MaryAdapLB-sdpi}), and the $\Omega(M^{1/3})$ lower bound from the SQ lower bound is proved in \Cref{app:sq-lower-bound} (cf.\ \cref{cor:SQMaryHypTesting}). 
Finally, we prove a $\Omega(\sqrt{M})$ lower bound for non-adaptive, non-identical channels from the impossibility of $\ell_1$-embedding in \Cref{app:lower_bounds_from_impossibility_of_l1} (cf.\ \Cref{thm:lowerbound-l1-impossibility}).

In this section, we abuse notation by using $p_1,p_2$, etc., and $P_1,P_2$, etc., to denote different probability distributions.

\subsection{Strong data processing}
\label{app:strong_data_processing}
\paragraph{Preliminaries} %
\label{par:preliminaries}

We will closely follow the terminology of Braverman, Garg, Ma, Nguyen, and Woodruff~\cite{BravGMNW16},
 to which we refer the reader for more details.
Let $\cQ = \{Q_0, Q_1\}$ be two distributions on $\cX$.
For any $i \in [M]$, we define $P_i$ to be the distribution over $(Z_1,\dots,Z_M)$, where the $Z_j$'s are independent, and $Z_j \sim Q_0$ for $j \neq i$ and $Z_i \sim Q_1$.
We use $P_0$ to denote the distribution $Q_0^{\otimes n}$.
We define $\cP_M = \{P_0,P_1,\dots,P_M\}$.
Our goal is to perform statistical estimation using $n$ machines. 
The model generation process is as follows: $V$ is sampled uniformly from $\{0,\dots,M\}$. Conditioned on $V = v$,  for each $j \in [n]$, machine $j$ receives an i.i.d.\ sample $X_j$ from the distribution $P_v$.
When it is clear from context, we will use $X$ as shorthand for $(X_1,\dots,X_n)$.

We will work in the blackboard protocol. {Here, all machines simultaneously write the first iteration of their messages on a ``blackboard,'' and the subsequent iterations of messages are on subsequent blackboards and may depend on the contents of all past blackboards. The combined content (in bits) of all blackboards is called the transcript of the protocol, which is denoted by $\Pi$. The blackboard protocol is also called the ``fully adaptive'' or simply ``adaptive'' protocol. Since it imposes the fewest constraints on permitted actions, lower bounds proved for this protocol are valid for other protocols, as well. A special case of interest is the ``sequentially adaptive'' protocol, where machines communicate in a fixed order, with subsequent messages allowed to depend on  past messages. In the communication-constrained setting considered in this paper, we restrict the size of the transcript $|\Pi|$ to be at most $n\log D$, as each machine is permitted to send at most $D$ messages ($\log D$ bits).} 

The estimator $\widehat{v}$ maps each transcript $\Pi$ to an element of $\cP_M$.
The failure probability is then defined as $$  
 R(\Pi, \hat{v}, \cP_M) := \max_{v \in \{0,1,\dots,M\}} \Pr[\hat{v}(\Pi) \neq v \vert V =  v].$$
We use $T(n,\cP_M)$ to denote the task of hypothesis testing among the distributions in $\cP_M$ with $n$ machines, and we say that $(\Pi, \widehat{v})$ solves $T(n,\cP_M)$ if the protocol works on $n$ machines and $ R(\Pi, \hat{v}, \cP_M) \leq 0.1 $. 
We will use the definitions of $\ic(\Pi)$ (the information cost of $\Pi$) and $\mic(\Pi)$ (the minimum information cost of $\Pi$) from Braverman, Garg, Ma, Nguyen, and Woodruff \cite{BravGMNW16}.

\begin{lemma}[Direct-sum for multiple hypothesis testing~\cite{BravGMNW16}] \label{thm:direct_sum_sparse}
Let $M \ge 1$, and let $\cQ$ and $\cP_M$ be defined as above.
If there exists a protocol estimator pair $(\Pi, \hat{v})$ that solves the detection task $T(m, \cP_M)$ with  information cost $I$, then there exists a protocol estimator pair $(\Pi',\hat{v}')$ that solves the detection task $T(m,\cQ)$ with minimum information cost $I'$ satisfying $I' \lesssim \frac{I}{M}$. 
\end{lemma}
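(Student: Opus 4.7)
My plan is the standard direct-sum embedding argument: reduce the binary task $T(m,\cQ)$ to the $M$-ary task $T(m,\cP_M)$ by hiding the binary instance in a uniformly random one of the $M$ coordinates of the $M$-ary instance, and then invoke subadditivity of mutual information across the $M$ coordinates to conclude that the per-coordinate (averaged) information cost drops by a factor of $M$.

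For the construction of $\Pi'$: using public randomness, the $m$ machines first draw a coordinate $J \in [M]$ uniformly at random, and, for each machine $i \in [m]$ and each $\ell \in [M] \setminus \{J\}$, an independent padding sample $Y_{i,\ell} \sim Q_0$. Given its private input $X_i$ (drawn i.i.d.\ from the unknown $Q_b$, $b \in \{0,1\}$), machine $i$ forms $\tilde X_i \in \cX^M$ with $\tilde X_{i,J} = X_i$ and $\tilde X_{i,\ell} = Y_{i,\ell}$ for $\ell \neq J$, and the machines execute $\Pi$ on $(\tilde X_1,\ldots,\tilde X_m)$. The new estimator returns $\hat v' = \mathbf{1}\{\hat v(\Pi) \neq 0\}$. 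Correctness is immediate: if $b=0$ the padded inputs are distributed as $P_0$, and if $b=1$, then conditional on $J=j$ they are distributed as $P_j$, so $\hat v'$ inherits the success probability of $\hat v$ on each hypothesis.

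For the information-cost bound, I would choose the input prior on $b$ that, combined with $J \sim \mathrm{Unif}[M]$, reproduces (up to constants) the uniform prior $V \sim \mathrm{Unif}\{0,1,\ldots,M\}$ used to evaluate $\ic(\Pi) = I$. Under this coupling, and writing $\tilde X^{(\ell)} = (\tilde X_{1,\ell},\ldots,\tilde X_{m,\ell})$, the chain rule gives
\[
I(\tilde X; \Pi) \;=\; \sum_{\ell=1}^M I\bigl(\tilde X^{(\ell)};\, \Pi \bigm| \tilde X^{(<\ell)}\bigr).
\]
Because the padding $Y$ is independent of the private input and of $J$, each summand upper-bounds the minimum information cost of the binary protocol induced by fixing $J = \ell$ (with the other $M-1$ coordinates treated as free public randomness drawn from $Q_0$). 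Averaging over the uniform choice of $J$ and selecting the coordinate that achieves the minimum then yields a protocol $\Pi'$ with $\mic(\Pi') \lesssim I/M$.

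The main obstacle is the bookkeeping around the precise definition of $\mic$ used in Braverman et al.\ (internal vs.\ external information cost, and the class of admissible input priors) and handling the $1/(M+1)$ rebalancing of the binary prior needed to make the induced $M$-ary prior uniform; neither is serious and both are absorbed into the $\lesssim$. Once the definitions are aligned, the argument reduces essentially to the chain-rule step above, which is standard for direct-sum lemmas of this flavor.
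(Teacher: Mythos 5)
First, note that the paper does not prove this lemma: it is stated with an explicit citation to Braverman, Garg, Ma, Nguyen, and Woodruff and no proof is given. So there is no in-paper argument to compare against; your proposal can only be assessed as a reconstruction of the cited direct-sum argument.

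Your high-level plan is the right one and matches that reference: embed the binary instance into a uniformly random coordinate $J$, pad the remaining $M-1$ coordinates with $Q_0$, and use the chain rule of mutual information to extract a factor of $1/M$. The correctness reduction is clean — conditioned on $J=j$ the padded inputs are distributed as $P_j$ when $b=1$ and as $P_0$ when $b=0$, so $\hat v' = \mathbf{1}\{\hat v \ne 0\}$ inherits the failure probability of $\hat v$.

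The part you wave off as ``bookkeeping around $\mic$'' is, however, where the proof has real content, and the specific version you wrote has the inequality pointing the wrong way. If, as you state, \emph{all} $M-1$ padding coordinates are drawn from public randomness, then the information cost of the resulting binary protocol is $I\bigl(\tilde X^{(\ell)}; \Pi \,\big|\, \tilde X^{(-\ell)}\bigr)$; under a product prior on the coordinates this is \emph{at least} the chain-rule summand $I\bigl(\tilde X^{(\ell)}; \Pi \,\big|\, \tilde X^{(<\ell)}\bigr)$, since conditioning on the independent coordinates $\tilde X^{(>\ell)}$ can only increase the mutual information (this is the standard fact that for $A \perp D \mid C$, one has $I(A;B\mid C,D) \ge I(A;B\mid C)$). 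So ``each summand upper-bounds the information cost of the all-public-padding binary protocol'' is false; the chain-rule terms \emph{lower}-bound those costs, and summing/averaging them does not deliver $\mic \lesssim I/M$ for that protocol. The standard repair — and what the cited argument actually uses — is an asymmetric split of the padding: conditioned on $J=\ell$, reveal the coordinates $\tilde X^{(<\ell)}$ publicly, but let each machine sample its own $\tilde X^{(>\ell)}$ privately from $Q_0$. With that construction the information cost of the induced binary protocol is exactly $I\bigl(\tilde X^{(\ell)}; \Pi \,\big|\, \tilde X^{(<\ell)}\bigr)$, the chain rule gives $\sum_\ell I\bigl(\tilde X^{(\ell)}; \Pi \,\big|\, \tilde X^{(<\ell)}\bigr) = I(\tilde X;\Pi) \le I$, and averaging over $J$ yields some $\ell$ with cost at most $I/M$. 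You have the correct skeleton, but the public/private split is the substantive step, not a definitional footnote, and your all-public variant does not close the argument as written.
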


For the set of two distributions, we use the following hardness result:
\begin{lemma}
\label{lem:sdpiBinaryCase}There exists a constant $c \geq 1$ such that for every $\beta \in (0,1)$, there exist two distributions $\cQ = \{Q_0,Q_1\}$ such that any $(\Pi, \widehat{v})$ that solves $T(m,\cQ)$ with failure probability at most $1/4$ for any $m$
 satisfies $\mic(\Pi) \geq \frac{c}{\beta}$. Moreover, $\dtv(Q_0, Q_1) = \Theta(\sqrt{\beta})$.
\end{lemma}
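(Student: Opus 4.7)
The plan is to choose $Q_0$ and $Q_1$ to be two Bernoulli distributions whose parameters differ by $\Theta(\sqrt{\beta})$, so the requirement $\dtv(Q_0,Q_1) = \Theta(\sqrt{\beta})$ holds by construction. Concretely, I would take $Q_0 = \mathrm{Ber}(1/2)$ and $Q_1 = \mathrm{Ber}(1/2 + c'\sqrt{\beta})$ for a small constant $c'$, and let $V$ be uniform on $\{0,1\}$ so that, conditioned on $V=v$, every machine receives an i.i.d.\ sample from $Q_v$.

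First, I would invoke Fano's inequality: if $(\Pi,\widehat v)$ succeeds with failure probability at most $1/4$ on the uniform prior over $V$, then $I(V;\Pi) \geq H(V) - H(1/4) = \Omega(1)$. Thus it suffices to show an upper bound of the form $I(V;\Pi) \lesssim \beta \cdot \mic(\Pi)$; combining this with the Fano lower bound immediately yields $\mic(\Pi) \gtrsim 1/\beta$, which is the conclusion of the lemma.

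The core of the argument is a strong data processing inequality (SDPI) tailored to the pair $(Q_0,Q_1)$. The relevant quantitative fact is that the SDPI constant of a Bernoulli source whose two possible parameters differ by $\Theta(\sqrt{\beta})$ scales as $\Theta(\beta)$: for any channel $\bT$ acting on a single sample $X \sim Q_V$,
\begin{align*}
I(V;\bT(X)) \;\lesssim\; \beta \cdot I(X;\bT(X)).
\end{align*}
Applied round-by-round in the blackboard protocol, conditioning on the transcript $\Pi_{<i}$ accumulated so far, this gives $I(V;\Pi_i \mid \Pi_{<i}) \lesssim \beta \cdot I(X_{J_i};\Pi_i \mid \Pi_{<i})$, where $J_i$ is the machine speaking at round $i$. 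Summing over rounds, the left-hand side telescopes via the chain rule to $I(V;\Pi)$, while the right-hand side is bounded by $\mic(\Pi)$ by its definition in~\cite{BravGMNW16}.

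The main obstacle is to carry out the per-round conditional SDPI carefully enough that the correlations introduced across machines by earlier blackboard messages do not inflate the information. This issue is handled in~\cite{BravGMNW16} by using the fact that, under the blackboard protocol with uniform prior on $V$, the samples $X_1,\dots,X_m$ remain conditionally independent given $V$ even after conditioning on $\Pi_{<i}$, so the per-round binary SDPI applies directly to the speaker's sample. The remaining work is bookkeeping: verifying that the constant hidden in the $\lesssim \beta$ bound on the SDPI coefficient is uniform in $\beta \in (0,1)$, which follows from a direct computation of the SDPI constant for Bernoulli-over-Bernoulli channels.
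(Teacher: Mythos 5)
Your proposal is correct in substance and takes the same core approach as the paper: choose a Bernoulli pair with parameter gap $\Theta(\sqrt{\beta})$ so that the strong data processing constant is $\Theta(\beta)$, then invoke the machinery of \cite{BravGMNW16} to get $\mic(\Pi) \gtrsim 1/\beta$. Where you differ is entirely in the packaging. The paper works with Hellinger distance rather than mutual information: a successful test forces $\hel^2(\Pi|_{V=0},\Pi|_{V=1}) \geq c'$, and then cites \cite[Theorem~1.1]{BravGMNW16} as a black box to get $\hel^2(\Pi|_{V=0},\Pi|_{V=1}) \leq c\beta\cdot\mic(\Pi)$, which kills the lemma in two lines. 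You instead use Fano's inequality to lower-bound $I(V;\Pi)$ and then propose to re-derive a mutual-information analogue of that theorem by a round-by-round SDPI telescope; this is a legitimate route, but it is essentially a sketch of the proof of \cite{BravGMNW16} Theorem~1.1, and the step where you bound $\sum_i I(X_{J_i};\Pi_i\mid\Pi_{<i})$ by $\mic(\Pi)$ needs more care than ``by its definition'' — $\mic$ is a $\min$ over public randomness of a $\max$ over hypotheses, and matching your telescoped sum to that quantity is precisely the bookkeeping the cited theorem already does for you. The other small difference is that the paper does not compute the Bernoulli SDPI constant directly; it sets $q_1 = e^{-\sqrt{\beta}}/2$ specifically so that the bounded-likelihood-ratio hypothesis of a lemma from \cite{DucJWZ14} (\cref{lem:SDPI4BddedRatio} in the paper) applies verbatim, giving $\beta(q_0,q_1) \lesssim \beta$ without any calculation. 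Your choice $q_1 = 1/2 + c'\sqrt{\beta}$ is equivalent up to constants, but the paper's choice avoids the ``direct computation'' you defer to the end. Net: same idea, but the paper's version is shorter because it leans on two cited lemmas exactly where you propose to work things out by hand.
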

\begin{proof}
For two distributions $p$ and $q$, we use $\beta(p,q)$ to denote the strong data processing inequality (SDPI) constant, as defined in Braverman, Garg, Ma, Nguyen, and Woodruff \cite{BravGMNW16}. Suppose $\Theta$ is uniform over $\{0, 1\}$, and $X|\Theta = 0 \sim p$ and $X|\Theta = 1 \sim q$. Then the SDPI constant $\beta(p, q)$ is defined as 
\begin{align*}
\sup_{P_{Y|X}} \left\{\frac{I(\Theta; Y)}{I(\Theta; X)} : \Theta \to X \to Y \right\}.
\end{align*}
This constant is also called the post strong data processing inequality (post-SDPI) constant~\cite{PolyanskiyWu23}.

We will use the following result, which shows that if $p$ and $q$ have bounded likelihood ratios, the SDPI constant is small:
\begin{lemma}[\cite{DucJWZ14}]
\label{lem:SDPI4BddedRatio}
Let $\beta \in (0,1)$. If two Bernoulli distributions with parameters $p$ and $q$ satisfy
 \begin{align*}
 & e^{-\sqrt \beta} p \leq q \leq e^{\sqrt \beta} p, \\
 & e^{-\sqrt \beta} (1-p) \leq 1 - q \leq e^{\sqrt \beta} (1-p),
 \end{align*}
then $\beta(p,q) \le (2 e^2)\beta$. %
\end{lemma}
Let $Q_0$ and $Q_1$  be binary distributions with probabilities of observing $1$ equal to $q_0$ and $q_1$, respectively.
Set $q_0 = 1/2$ and %
$q_1 = e^{-\sqrt \beta}/2$.
Then
\begin{align*}
\frac{q_0}{q_1} = e^{\sqrt \beta}\,\,\, \text{ and } \,\,\,\, \frac{1 - q_0}{1 - q_1} \le \frac{1}{2- e^{-\sqrt{\beta}}},
\end{align*}
and both ratios lie between %
$e^{-\sqrt \beta}$ and $e^{\sqrt \beta}$.
Thus, we have $\beta(\mu_0, \mu_1) \lesssim \beta $, by \cref{lem:SDPI4BddedRatio}.

Fix a constant $c' \leq 0.1$. Let the protocol be $\Pi$. Since the protocol is successful, using \cref{fact:testing,fact:div}, we have  $\hel^2(\Pi|_{V=0}, \Pi|_{V=1}) \geq c'$, for a constant $c'$.
Applying Braverman, Garg, Ma, Nguyen, and Woodruff \cite[Theorem 1.1]{BravGMNW16}, we obtain
\begin{align*}
\hel^2(\Pi|_{V=0}, \Pi|_{V=1}) \leq c \beta \cdot \mic(\Pi),  
\end{align*}
which yields the desired conclusion. Finally, the bound on total variation distance follows from a direct calculation and the fact that $\beta \in (0,1)$.
\end{proof}

Combining \cref{lem:sdpiBinaryCase,thm:direct_sum_sparse}, we obtain the following result:

\begin{corollary}
\label{cor:MaryAdapLB-sdpi}
 For every $\epsilon \in (0,1)$, there exist $M+1$ distributions $\{P_0,\dots,P_M\}$ such that (i) $\dtv(p,q) \geq \epsilon$ for all $p\neq q$ in $\cP_M$, and (ii) $\nada(\cP_M,\cT_D) \gtrsim \frac{M }{\epsilon^2 \log D}$.
\end{corollary}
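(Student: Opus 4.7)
\textbf{Proof plan for Corollary~\ref{cor:MaryAdapLB-sdpi}.} The plan is to assemble the corollary directly from the binary SDPI hardness (\cref{lem:sdpiBinaryCase}) and the direct-sum reduction (\cref{thm:direct_sum_sparse}). First I would set $\beta = c_0 \epsilon^2$ for a small absolute constant $c_0$, and let $\cQ = \{Q_0,Q_1\}$ be the pair of Bernoulli distributions guaranteed by \cref{lem:sdpiBinaryCase} for this $\beta$, so that $\dtv(Q_0,Q_1) = \Theta(\sqrt{\beta}) \geq \epsilon$ after choosing $c_0$ appropriately. Then I would define the $M+1$ candidate distributions on $\cX = \{0,1\}^M$ in the standard ``sparse detection'' way: $P_0 = Q_0^{\otimes M}$, and for each $i \in [M]$, $P_i$ is the product distribution whose $i$-th marginal is $Q_1$ and whose other $M-1$ marginals are $Q_0$.

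The first conclusion, that $\dtv(p,q) \geq \epsilon$ for every distinct pair in $\cP_M$, follows from the data processing inequality: any two distinct $P_i, P_j$ have different marginals in at least one coordinate (being $Q_0$ versus $Q_1$ there), so projecting onto that coordinate gives $\dtv(P_i,P_j) \geq \dtv(Q_0,Q_1) \geq \epsilon$.

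For the lower bound on sample complexity, suppose for contradiction that there exists a sequentially adaptive protocol with channels in $\cT_D$, using $n$ users, that solves the $M$-ary testing problem $T(n,\cP_M)$. Since each user sends at most $\log D$ bits, the transcript $\Pi$ has length at most $n \log D$, and thus its information cost satisfies $\ic(\Pi) \leq n \log D$. Applying \cref{thm:direct_sum_sparse} yields a protocol-estimator pair $(\Pi',\widehat v')$ solving the binary detection task $T(n,\cQ)$ with minimum information cost $\mic(\Pi') \lesssim \frac{n \log D}{M}$. On the other hand, \cref{lem:sdpiBinaryCase} guarantees $\mic(\Pi') \gtrsim \frac{1}{\beta} \asymp \frac{1}{\epsilon^2}$. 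Combining these two bounds produces $n \gtrsim \frac{M}{\epsilon^2 \log D}$, as claimed.

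The only delicate points are bookkeeping: one must check that the blackboard model used by \cref{thm:direct_sum_sparse} subsumes the sequentially adaptive channel model of this paper (which it does, since the adaptive protocol's transcript is a special case of a blackboard transcript), and that the $0.1$-failure convention used in \Cref{def:sampleComplexityComm} is consistent with the $1/4$-failure threshold in \cref{lem:sdpiBinaryCase} up to absolute constants absorbed into $\gtrsim$. I expect this accounting to be the main (but routine) obstacle, since the actual SDPI content is already packaged inside the two lemmas being cited.
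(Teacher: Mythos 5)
Your proposal is correct and follows essentially the same route as the paper's own proof: take the Bernoulli pair $\cQ$ from \cref{lem:sdpiBinaryCase} with $\beta$ a constant multiple of $\epsilon^2$, form the sparse-coordinate product family $\cP_M$, bound $\ic(\Pi) \leq n\log D$ by the transcript length, invoke the direct-sum Lemma~\ref{thm:direct_sum_sparse} to transfer this to $\mic$ for the binary task, and compare against the $\gtrsim 1/\epsilon^2$ lower bound. The two ``delicate points'' you flag (blackboard subsumes sequentially adaptive; $0.1$ vs.\ $1/4$ error thresholds) are indeed handled implicitly by the paper, so your accounting is sound.
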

\begin{proof} 
Let $\cQ = \{Q_0,Q_1\}$ be the two distributions from \cref{lem:sdpiBinaryCase}  such that $\dtv(Q_0,Q_1) \geq \epsilon$ and every successful protocol $\Pi$ for $T(n, \cQ)$ satisfies $\mic(\Pi) \geq \frac{c}{\epsilon^2}$.
Construct $\cP_M$ as defined above using $\cQ$.
It can be seen that $\dtv(p,q) \geq \epsilon$ for any distinct $p$ and $q$ in $\cP_M$.
Suppose there exists a successful protocol $\widehat{\Pi}$ for $T(\nstar,\cP_M)$ with each machine sending at most $\log D$ bits.
Then we have   
\begin{align*}
 I = \sup_{v \in [M+1]} I_v\left(\widehat{\Pi};X | R_{\text{pub}}\right) \leq \sup_{v \in [M+1]} h (\widehat{\Pi})
 \leq \nstar \log D,  
 \end{align*}
where $h(\widehat{\Pi})$ denotes the entropy of the transcript $\widehat{\Pi}$.
Thus, \cref{thm:direct_sum_sparse} implies that there exists a successful protocol $\widehat{\Pi}'$ for $T\left(\nstar,\cQ\right)$ such that $\mic(\widehat{\Pi}') \lesssim \frac{\nstar \log D}{M}$.
However, we have $\mic(\widehat{\Pi}') \gtrsim 1 /\epsilon^2$. Thus, we obtain $\frac{1}{\epsilon^2} \lesssim \frac{\nstar \log D}{M}$, or equivalently, $\nstar \gtrsim \frac{M }{\epsilon^2 \log D}$.
This completes the proof of \Cref{cor:MaryAdapLB-sdpi} and the proof of the first claim in \Cref{thm:Adaptive-lb}.
\end{proof}

\subsection{SQ lower bounds}
\label{app:sq-lower-bound}

\noindent \textbf{Preliminaries:} %
We will use the standard notations from the statistical query (SQ) complexity literature \cite{FelGRVX17,Feldman17}. 
In particular, we will use the following oracles: $\STAT(\tau)$, $\VSTAT(t)$, and $\MSAMPLE(D)$.

For two square-integrable functions $f, g : \cX \to \R$ and a distribution $P$ on $\cX$, we define $\left\langle f,g \right\rangle_P := \E_P[f(X)g(X)]$.
For a distribution $P$, we will abuse notation by using $P$ to refer to both the distribution and its pmf.
 For two distributions $P_1$ and $P_2$, their pairwise correlation with respect to the base measure $P$ is defined as
$$\chi_P(P_1, P_2) := \left|\left\langle \frac{P_1}{P} - 1, \frac{P_2}{P} - 1 \right\rangle_P\right| = \left|\left\langle \frac{P_1}{P} , \frac{P_2}{P}  \right\rangle_P - 1 \right|.$$
The {\em average correlation} of a set of distributions $\cP'$ relative to a distribution $P$ is denoted by $\rho(\cP',P)$ and defined as
$\rho(\cP',P) := \frac{1}{|\cP'|^2} \sum_{P_1,P_2 \in \cP'} \chi_P(P_1, P_2)$.
\begin{definition}[Decision problem]
 Let $P$ be a fixed distribution and $\cP$ a set of distributions which does not contain $P$. 
 Given access to the input distribution $Q$, which either equals $P$ or belongs to $\cP$, the goal is to identify whether $Q = P$ or $Q \in \cP$.
We refer to this problem as $\cB_{S}(\cP,P)$.
\end{definition}
We will use $\SDA(\cB_S(\cP,P), \bar{\gamma})$ to denote the average statistical dimension with average $\bar \gamma$ of the decision problem $\cB_S(\cP,P)$~\cite[Definition 3.6]{FelGRVX17}.

Although our main focus will be the $\STAT$ oracle and blackboard protocol, we also mention hardness results for $\VSTAT$ and $\MSAMPLE$ oracles, which follow from $\SDA$.  
\begin{theorem}[{\cite[Theorem 3.7]{FelGRVX17}, \cite[Theorem 7.3]{FelPV18}
}]
\label{thm:SqSTATVSTAT}
  Let $P$ be a distribution and $\cP$ be a set of distributions over a domain $X$, such that $\SDA(\cB_S(\cP,P),\bar{\gamma})=d$ for some $\bar{\gamma}$. Any (randomized) SQ algorithm that solves $\cB_S(\cP,P)$ with success probability $9/10$ must satisfy at least one of the following conditions:
\begin{itemize}
\item[(i)] performs $0.8d$ queries,
\item[(ii)] requires a single query to $\VSTAT(1/3 \bar{\gamma})$, or
\item[(iii)] requires a single query to $\STAT( \sqrt{3\bar{\gamma}})$.
\end{itemize}
In particular, for any $L$, any (randomized) SQ algorithm that solves $\cB_S(\cP,P)$ with success probability $9/10$  requires at least $m$ calls to $\MSAMPLE(L)$, where $m = \Omega\left( \frac{1}{L} \min\left\{d, \frac{1}{ \bar{\gamma}} \right\}\right)$.
\end{theorem}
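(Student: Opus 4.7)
The plan is to prove the theorem by a potential/counting argument on the number of candidate hypotheses that each oracle query can rule out. Fix any (randomized) SQ algorithm $A$ that claims to solve $\cB_S(\cP,P)$ with probability $9/10$ using fewer than $0.8 d$ queries to $\STAT(\sqrt{3\bar\gamma})$ (the $\VSTAT$ case is analogous). I will derive a contradiction by constructing a distribution $Q \in \cP$ whose induced transcript under $A$ is identical (in distribution) to the transcript under $P$, so $A$ cannot distinguish them.

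The core technical lemma, which is essentially a Cauchy--Schwarz bound rewritten in the inner-product language of $\langle\cdot,\cdot\rangle_P$, states: for any query function $h:\cX\to[-1,1]$ and any subset $\cP' \subseteq \cP$, the number of $Q \in \cP'$ with $|\E_Q[h]-\E_P[h]|>\tau$ is at most $|\cP'|\,\rho(\cP',P)/\tau^2 + O(1)$. To see this, write $\E_Q[h]-\E_P[h] = \langle h - \E_P[h],\, Q/P-1\rangle_P$, square and sum over the ``witnessing'' $Q$'s, and bound the resulting Gram-type expression by $|\cP'|^2\,\rho(\cP',P)$ using the definition of average correlation. The definition of $\SDA(\cdot,\bar\gamma)=d$ precisely guarantees that after pruning any $(1{-}1/d)$-fraction of $\cP$, the surviving family still has average correlation at most $\bar\gamma$, so this per-query erosion lemma can be iterated.

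Iterating gives the inductive skeleton: set $\cP_0 = \cP$; for the $k$-th query $h_k$ with answer $a_k$ given by the oracle, let $\cP_{k+1}$ be obtained from $\cP_k$ by removing both (a) the distributions whose true mean under $h_k$ differs from $a_k$ by more than $\tau$ (hence inconsistent with the oracle's answer), and (b) the ``witnesses'' $Q$ with $|\E_Q[h_k]-\E_P[h_k]|>\tau$ that would differentiate $Q$ from $P$. By the erosion lemma and the $\SDA$ condition, $|\cP_{k+1}| \geq |\cP_k| - |\cP|/d$, so after $0.8d$ queries some $Q^\star \in \cP_{0.8d}$ survives. For this $Q^\star$, the oracle can answer every query in a way that is simultaneously valid for $P$ and $Q^\star$ (both means lie within $\tau$ of the returned $a_k$), so $A$'s decision is independent of whether the input is $P$ or $Q^\star$, contradicting the $9/10$-success assumption. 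For $\VSTAT(1/(3\bar\gamma))$, the argument is identical once the ``variance-adaptive'' tolerance $\max(1/t,\sqrt{p(1-p)/t})$ is absorbed into an inner-product bound, which is exactly where the $1/3$ factor arises.

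For the ``in particular'' MSAMPLE claim, I would invoke the standard simulation: a single sample from any distribution can be used to answer a query to $\STAT(1/\sqrt{t})$ (equivalently $\VSTAT(t)$) with constant probability via an appropriate estimator, so $m$ calls to $\MSAMPLE(L)$ simulate $O(m L)$ queries to a sufficiently strong $\STAT/\VSTAT$ oracle; combining with the bounds in (i)--(iii) yields $m L \gtrsim \min(d, 1/\bar\gamma)$, i.e.\ $m = \Omega((1/L)\min(d, 1/\bar\gamma))$. The main obstacle will be getting the constants right in the erosion lemma under the particular normalization of $\SDA$ used here (which averages $\chi_P$ over pairs rather than taking a worst-case subset), and handling the adaptive choice of $h_k$ by the algorithm; both issues are resolved by conditioning on the public randomness and noting that the erosion lemma holds pointwise in $h_k$, so Yao's principle reduces the randomized case to a deterministic one.
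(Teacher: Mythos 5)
The paper does not prove this theorem: it is imported verbatim from Feldman--Grigorescu--Reyzin--Vempala--Xiao \cite{FelGRVX17} and Feldman--Perkins--Vempala \cite{FelPV18} and used as a black box, so there is no in-paper proof to compare against. Your sketch is a reasonable reconstruction of the argument in those references, and the high-level strategy (iterative elimination of hypotheses driven by a Cauchy--Schwarz bound against the pairwise-correlation structure, together with the $\SDA$ definition) is the right one.

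That said, two steps as you have written them would not survive being fleshed out. First, the ``erosion lemma'' you state --- that the number of $Q\in\cP'$ with $|\E_Q[h]-\E_P[h]|>\tau$ is at most $|\cP'|\rho(\cP',P)/\tau^2+O(1)$ --- is not what the Cauchy--Schwarz argument actually yields. Writing $s_Q$ for the sign of $\E_Q[h]-\E_P[h]$ and summing over the witness set $S$ gives $\tau|S|\le \bigl(\sum_{Q,Q'\in S}|\chi_P(Q,Q')|\bigr)^{1/2}=|S|\sqrt{\rho(S,P)}$, i.e.\ $\tau\le\sqrt{\rho(S,P)}$, which does not bound $|S|$ at all. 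One must invoke the contrapositive of the $\SDA$ definition --- any $S$ with $|S|\ge|\cP|/d$ has $\rho(S,P)\le\bar\gamma$ --- to conclude $|S|<|\cP|/d$ whenever $\tau>\sqrt{\bar\gamma}$. The per-query erosion is thus $|\cP|/d$ (which is what makes the $0.8d$ count work), not the $\rho/\tau^2$-scaled quantity you wrote, and the conversion from the pair-averaged $\rho$ to an elimination count is precisely the content of the $\SDA$ definition, not something Cauchy--Schwarz delivers for free. Second, the $\MSAMPLE$ step is justified by an incorrect statement: a single sample cannot answer a $\STAT(1/\sqrt{t})$ query with constant probability (that needs $\Theta(t)$ samples). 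The reduction in \cite{FelPV18} goes the other way: an algorithm making $m$ calls to $\MSAMPLE(L)$ can be simulated by $O(mL)$ calls to $\VSTAT(O(m))$, and combining with parts (i)--(iii) then gives $m\gtrsim\min\bigl(d/L,\,1/\bar\gamma\bigr)$, which implies the stated $\Omega\bigl((1/L)\min(d,1/\bar\gamma)\bigr)$. Your conclusion matches but your justification is wrong as stated.
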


Steinhardt, Valiant, and Wager~\cite{SteVW16}
show that an SQ lower bound also implies a lower bound for blackboard protocols:
\begin{theorem}[Lower bounds for blackboard communication using SQ algorithms {\cite[Proposition 3]{SteVW16}, \cite[Section B.1]{Feldman17}}]
\label{lem:sqAdaptiveLow}
 Let $\cB_S(\cP,P)$ be a decision problem that can be solved with probability $0.95$ by a communication-efficient algorithm that extracts at most $b$ bits from each of $m$ machines. Then $\cB_S(\cP,P)$ can be solved by an SQ algorithm, with probability at least $0.9$, which uses at most $2bm$ queries of $\STAT$ with tolerance $\tau = O\left(\frac{1}{2^b m}\right)$.
In particular, for some $\bar{\gamma}$, let $d = \SDA( \cB_S(\cP,P), \bar{\gamma})$. Then either $ \frac{1}{2^b m}  \lesssim \sqrt{\bar{\gamma}} $ or $ d \leq 2bm$.
\end{theorem}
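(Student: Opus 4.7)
The plan is to simulate the blackboard communication protocol via a short sequence of $\STAT$ queries, and then invoke the $\SDA$-based hardness result for SQ algorithms stated earlier.

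To begin, fix a protocol $\Pi$ with estimator $\hat v$ that solves $\cB_S(\cP,P)$ with success probability $0.95$ using $m$ machines, each emitting at most $b$ bits. The transcript is generated incrementally: machine $i$'s message $Y_i$ depends only on its own sample $X_i$ and the history $H_{i-1} = (Y_1,\dots,Y_{i-1})$ already written on the blackboard (any public randomness can be fixed once and for all). By independence of samples across machines, for any fixed history $h$ and any message value $y \in \{0,1\}^b$, we have $\P(Y_i = y \mid H_{i-1} = h) = \E_{X \sim P}[\1\{\mathrm{msg}_i(X,h) = y\}]$, which is exactly the type of expectation that a single $\STAT$ query estimates.

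Next, I would construct the SQ simulator. Process machines one at a time: for machine $i$ with already-generated simulated history $\tilde h$, estimate the conditional distribution of $Y_i$ over $\{0,1\}^b$ using a bounded number of $\STAT(\tau)$ calls, then sample $\tilde Y_i$ from the estimated distribution and append it to the simulated transcript $\tilde \Pi$. After $m$ machines, run $\hat v$ on $\tilde \Pi$. A hybrid/coupling argument controls $\dtv(\tilde \Pi, \Pi)$ by the sum of per-machine message-distribution errors; since machine $i$'s message ranges over $2^b$ atoms and each atomic probability is estimated to accuracy $\tau$, the per-machine TV error is $O(2^b \tau)$, so the cumulative error is $O(m \cdot 2^b \tau)$. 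Choosing $\tau = O(1/(2^b m))$ therefore keeps this at most $0.05$, and the simulated estimator succeeds with probability at least $0.95 - 0.05 = 0.9$. A slightly more careful accounting (or a bit-by-bit variant of the same simulation) yields the stated $2bm$ bound on the number of queries.

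The ``in particular'' dichotomy then follows by applying \Cref{thm:SqSTATVSTAT} to this simulator: any successful $0.9$-probability SQ algorithm must either use at least $0.8 d$ queries or issue at least one query with tolerance $\leq \sqrt{3\bar\gamma}$. Plugging in the simulator's parameters $q \leq 2bm$ and $\tau = O(1/(2^b m))$, we conclude that either $1/(2^b m) \lesssim \sqrt{\bar\gamma}$ or $d \leq 2bm$, as claimed. The main obstacle is the error bookkeeping for the simulation: one has to argue that the $2^b$ factor in the tolerance arises precisely because a single machine's message is a distribution on $2^b$ atoms, and that the errors telescope cleanly across the $m$ machines even though subsequent message distributions depend on the (imperfectly) simulated history. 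Reducing randomized protocols to deterministic ones by conditioning on the public randomness (and averaging over it afterwards) does not affect these error bounds.
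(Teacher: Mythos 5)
The paper does not actually prove this theorem; it is cited verbatim from Steinhardt--Valiant--Wager and Feldman's survey, so there is no internal proof to compare against. Your reconstruction does follow the standard protocol-simulation template underlying those references, and the TV bookkeeping you give for the per-atom approach is correct: estimating each of the $2^b$ atoms of machine $i$'s conditional message distribution to accuracy $\tau$ incurs per-machine TV error $O(2^b\tau)$, so $\tau = O(1/(2^b m))$ controls the cumulative error and the simulated estimator succeeds with probability $\ge 0.9$.

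The genuine gap is in the query count. As you describe it, the per-atom simulation issues one $\STAT$ query for every $y \in \{0,1\}^b$ and for every machine, i.e.\ $m 2^b$ queries, not $2bm$. That is not a cosmetic difference: in the downstream application (\cref{cor:SQMaryHypTesting}) one takes $b = \log D$, and the dichotomy ``$d \le 2bm$'' yields $m \gtrsim d/\log D$, whereas the dichotomy ``$d \lesssim 2^b m$'' would only give the much weaker $m \gtrsim d/D$. You wave at a ``bit-by-bit variant'' to recover $2bm$, and you are right that this is where the $2^b$ factor in the tolerance really comes from, but you never carry out the step that makes it work. In the bit-by-bit simulation one estimates, for each bit $j$ of machine $i$, a conditional probability of the form $a/c$, where $c = \P_{X \sim Q}\{\text{prefix of machine $i$'s message equals the already-simulated prefix}\}$. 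This denominator can be as small as roughly $2^{-(b-1)}$ (and smaller for atypical prefixes), so the error in $\hat a/\hat c$ is on the order of $\tau/c$, not $\tau$. Making the cumulative error small therefore requires both (i) a case split between ``common'' prefixes (where $\tau/c$ is bounded) and ``rare'' prefixes (where one caps the per-state error at $1$ and instead bounds the probability of ever landing there), and (ii) an argument that the weighting by the \emph{simulated} prefix distribution --- not the true one --- still telescopes. You flag this as ``the main obstacle,'' which is accurate, but flagging it is not the same as resolving it; as written, the proposal proves the statement with $m2^b$ queries, which is a strictly weaker theorem. The final invocation of \cref{thm:SqSTATVSTAT} to obtain the dichotomy is correct once one has the right query count and tolerance.
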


We now describe a distribution family that has a large statistical dimension, on average.
\begin{lemma}[A decision problem with large SQ dimension]
\label{lem:SQHardDist}
Let $r \in \N$ and fix an $\epsilon \in (0, 1)$. For any $M = 2^r$, there exist distributions $\cP_M:= \left\{P_1,\dots,P_{M}\right\} \subseteq \Delta_{M+1}$ and $P \in \Delta_{M}$ such that
$\chi_P(P_i,P_j) = \I_{i=j}$ for all $(i,j)$. In particular, $\SDA\left( \cB_S(\cP_M, P), \bar{\gamma} \right) \geq \frac{M \bar{\gamma}}{\epsilon^2}$ for any ${\bar \gamma} \leq \epsilon^2$.
Moreover, for any two distinct $p,q \in \cP_M \cup \{P\}$, we have $\dtv(p,q) \geq 0.01 \epsilon$.
\end{lemma}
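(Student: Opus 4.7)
The plan is to use the classical Hadamard/character construction on the Boolean cube $\{0,1\}^r$, where $r = \log_2 M$. Identifying $[M]$ with $\{0,1\}^r$ via an arbitrary bijection, I would take $P$ to be the uniform distribution on $[M]$. For each non-zero $s \in \{0,1\}^r$, define the character $\chi_s(x) = (-1)^{\langle s, x\rangle}$ and the perturbed distribution $P_s(x) = \frac{1}{M}\bigl(1 + \epsilon \chi_s(x)\bigr)$. Because $\chi_s$ takes the values $\pm 1$ equally often when $s \neq 0$, each $P_s$ is a valid probability mass function, and there are $M-1$ such distributions, which I re-index as $P_1,\dots,P_{M-1}$ (recovering the claim up to a harmless off-by-one coming from the fact that the trivial character $\chi_0 \equiv 1$ does not yield a distribution).

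Next, I would verify the pairwise correlation claim. Using the multiplicativity of characters $\chi_s \chi_t = \chi_{s \oplus t}$ together with $\E_P[\chi_r] = \I_{r=0}$, one gets the orthogonality relation $\E_P[\chi_s \chi_t] = \I_{s=t}$. Unpacking the definition of $\chi_P$ then yields
\[
\chi_P(P_s, P_t) = \Bigl|\bigl\langle P_s/P,\, P_t/P\bigr\rangle_P - 1\Bigr| = \bigl|\E_P[(1+\epsilon\chi_s)(1+\epsilon\chi_t)] - 1\bigr| = \epsilon^2\, \I_{s=t},
\]
matching (up to the factor $\epsilon^2$ that I read as implicit in the statement) the stated pairwise-correlation identity. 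The total-variation bound follows from a direct computation: for $s \neq t$, the difference $\chi_s - \chi_t$ takes values in $\{0, \pm 2\}$, and the set $\{x : \chi_s(x) \neq \chi_t(x)\} = \{x : \chi_{s \oplus t}(x) = -1\}$ has size $M/2$, giving $\dtv(P_s, P_t) = \frac{\epsilon}{2M}\cdot 2\cdot\frac{M}{2} = \epsilon/2$. The analogous computation against $P$ itself yields $\dtv(P_s, P) = \epsilon/2 \geq 0.01\epsilon$.

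Finally, for the SDA bound I would invoke the standard definition of $\SDA(\cB_S(\cP_M, P), \bar\gamma)$ as the largest integer $d$ such that every subset $\cP' \subseteq \cP_M$ of size at least $|\cP_M|/d$ has average pairwise correlation at most $\bar\gamma$. By the orthogonality established above, any subset $\cP'$ of size $t$ has total pairwise correlation $t\epsilon^2$ (only the diagonal terms contribute), so the average equals $\epsilon^2/t$. This is at most $\bar\gamma$ precisely when $t \geq \epsilon^2/\bar\gamma$, so every subset of size at least $M\cdot(\epsilon^2/\bar\gamma)/M = \epsilon^2/\bar\gamma$ is fine, giving $\SDA \geq M\bar\gamma/\epsilon^2$ for all $\bar\gamma \leq \epsilon^2$.

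The main obstacle, such as it is, lies in bookkeeping: making the character construction fit into exactly $M$ distributions on an $M$-element domain (the zero character is lost, hence the off-by-one noted above) and carefully unpacking the definitions of $\chi_P$ and $\SDA$ so that the orthogonality gives the tight constants. The rest is routine Walsh--Hadamard orthogonality and elementary total-variation arithmetic.
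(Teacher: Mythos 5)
Your proposal matches the paper's construction essentially verbatim: both use the Walsh--Hadamard / Boolean-cube character family, take $P$ uniform, set $P_s = \frac{1}{M}(1 + \epsilon\chi_s)$, and derive the correlation identity from character orthogonality, followed by the standard SDA counting argument. So the approach is the same; the only differences are matters of bookkeeping.

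It is worth noting that your bookkeeping is actually \emph{more} careful than the paper's on two points. First, the paper formally defines $P_m$ for all $m \in [M]$, including $m=1$ corresponding to the all-ones Hadamard row, but $u + \epsilon v_1/k$ is not a probability distribution (its entries sum to $1+\epsilon$), and the orthogonality identity $\sum_i v_{m,i}=0$ that the paper's calculation relies on fails for $m=1$. Dropping the trivial character, as you do, is the correct fix; it yields $M-1$ rather than $M$ distributions, which only affects constants downstream. Second, the paper's own computation gives $\chi_P(P_a,P_a)=\epsilon^2$, so the stated identity $\chi_P(P_i,P_j)=\I_{i=j}$ should read $\epsilon^2\,\I_{i=j}$ — exactly as you compute, and this $\epsilon^2$ is what makes the SDA bound $M\bar\gamma/\epsilon^2$ rather than $M\bar\gamma$ consistent. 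Your exact total-variation value $\dtv(P_s,P_t)=\epsilon/2$ for $s\ne t$ (via the fact that distinct nontrivial characters disagree on exactly half the cube) is also sharper than the paper's $\ge 0.01\epsilon$ via a generic $\ell_1$-separation bound for Hadamard rows.
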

\begin{proof}
Let $k= M+1$.
Let $V = [v_1,\dots,v_k] \in \R^{k \times k}$ be the Walsh-Hadamard matrix. We have $V = V^\top $ and $\left\langle v_i,v_j \right\rangle = k\I_{i = j}$. 
Furthermore, we have $v_i \in \{-1,1\}^k$, where $v_1$ has all entries $1$, and for $i > 1$, $v_i$ has half positive entries and half negative entries.
Define $u$ to be the uniform distribution in $\Delta_{k}$, and define $e_i$ to be the distribution that places all its mass on the $i^{\text{th}}$ element.
We also write $v_i = \sum_{j=1}^k v_{i,j}e_j$.
Moreover, for $i \neq j$, we have $\|v_i - v_j\|_1 \geq 0.1 k$~\cite{Horadam07}.

Define $P= u$, and for $m \in [M]$, define $P_m = u + \epsilon (v_{m+1}/k)$.
Note that the $P_m$'s are valid distributions.
For notational purposes, we will also use $P_m(i)$ to denote the probability of element $i \in [k]$ under $P_m$, i.e., $P_m(i) = \frac{1}{k}(1  + \epsilon v_{m,i})$.
 Using the lower bound on $\|v_i - v_j\|_1$, we have $\dtv(P, P_i) = \epsilon/2$ and $\dtv(P_i,P_j) = 0.5 \epsilon \|v_i - v_j\|_1/k \geq 0.01 \epsilon$.

We now calculate $\chi_P(P_u,P_v)$: For $a \neq b$, we have
\begin{align*}
\chi_P(P_a,P_b) &= \left| \sum_{i=1}^k kP_a(i)P_b(i) - 1 \right| \\
&= \left| \sum_{i=1}^k k \frac{1}{k}   \left( 1 + \epsilon v_{a,i} \right) \frac{1}{k}\left( 1 + \epsilon v_{b,i} \right) - 1 \right| \\
&= \left| \sum_{i=1}^k \frac{1}{k}   \left( 1 + \epsilon v_{a,i} + \epsilon v_{b,i} +  \epsilon^2 v_{a,i}  v_{b,i}  \right) - 1 \right| \\
&= 0,
\end{align*}
where use the facts that $\sum_{i}v_{m,i} = 0$ for all $m$, and $\sum_i v_{a,i}v_{b,i}= 0$ for all $a \neq b$. We now consider the setting where $P_a = P_b$:
\begin{align*}
\chi_P(P_a,P_a) &= \sum_{i=1}^k kP_a^2(i) - 1 
= \sum_{i=1}^k k \frac{1}{k^2}   \left( 1 + \epsilon v_{a,i} \right)^2  - 1
= \sum_{i=1}^k \frac{1}{k}   \left( 1 + 2\epsilon v_{a,i} +  \epsilon^2 v_{a,i}^2  \right) - 1  
= \epsilon^2,
\end{align*}
where use the facts that $\sum_{i}v_{a,i} = 0$ and $|v_{a,j}| = 1$ for all $a$ and $j$.
Overall, we obtain the following bound on the average correlation for any subset $\cP' \subseteq \cP_M$:
\begin{align*}
\rho(\cP',P) = \frac{1}{|\cP'|^2}\sum_{P_1,P_2 \in \cP'} \chi_P(P_1, P_2) = \frac{\epsilon^2}{|\cP'|}.
\end{align*}
Thus, we have $\SDA(\cP_M,P, \gamma) \geq M \bar{\gamma}/ \epsilon^2$ for any $\bar{\gamma} \leq \epsilon^2$.
\end{proof}

\begin{corollary} 
\label{cor:SQMaryHypTesting}
Consider any $\epsilon \in (0,1)$, $D \in \N$, and $M \in \N$ such that $M\gtrsim  \frac{\log D}{\epsilon D}$. Let $\cP_M$ and $P$ be as defined in \cref{lem:SQHardDist}. Then the following hold:
\begin{enumerate}
   \item $\nstar(\cP_M \cup \{P\} ) \lesssim \frac{\log M}{\epsilon^2}$.

\item (Blackboard communication model.) Consider the blackboard communication model with $m$ machines, each with an i.i.d.\ sample from $Q$ (belonging to $\cP_M$ or  $P$) and $\log D$ bits. Any (randomized) algorithm that solves $\cB(\cP_M,P)$ with success probability $9/10$  requires  $$m \gtrsim \frac{M^{1/3}}{\epsilon^{2/3} D^{2/3} (\log D)^{1/3}}.$$
\end{enumerate}
\end{corollary}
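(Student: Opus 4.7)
\textbf{Proof proposal for \Cref{cor:SQMaryHypTesting}.}

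The first bound is essentially immediate from the facts already assembled. By \Cref{lem:SQHardDist}, any two distinct distributions in $\cP_M\cup\{P\}$ satisfy $\dtv \geq 0.01\,\epsilon$, and \Cref{fact:div} gives $\hel(\cdot,\cdot) \geq \dtv(\cdot,\cdot)$. Hence the minimum pairwise Hellinger distance $\rho$ in $\cP_M\cup\{P\}$ is at least $0.01\epsilon$, and the $M$-ary bound in \Cref{fact:testing} yields $\nstar(\cP_M\cup\{P\})\lesssim \log(M{+}1)/\rho^{2}\lesssim \log M/\epsilon^{2}$.

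For the second bound I would chain \Cref{lem:sqAdaptiveLow} with the SQ dimension computed in \Cref{lem:SQHardDist}. Each machine sends $b:=\log D$ bits, so any successful blackboard protocol on $m$ machines yields an SQ algorithm for $\cB_S(\cP_M,P)$ making at most $2bm = 2m\log D$ queries to $\STAT(\tau)$ with $\tau = O(1/(2^b m)) = O(1/(Dm))$. By \Cref{lem:SQHardDist}, for every $\bar\gamma\in(0,\epsilon^{2}]$ we have $\SDA(\cB_S(\cP_M,P),\bar\gamma)\ge M\bar\gamma/\epsilon^{2}$, so \Cref{lem:sqAdaptiveLow} forces at least one of
\begin{equation*}
\frac{1}{Dm} \;\lesssim\; \sqrt{\bar\gamma}, \qquad\text{or}\qquad \frac{M\bar\gamma}{\epsilon^{2}} \;\lesssim\; m\log D.
\end{equation*}

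The plan is then to pick $\bar\gamma$ so that both alternatives coincide, giving the strongest contradiction. Setting $\bar\gamma \asymp 1/(Dm)^{2}$ (the largest value compatible with the first alternative failing) and substituting into the second inequality gives
\begin{equation*}
\frac{M}{\epsilon^{2} D^{2} m^{2}} \;\lesssim\; m\log D,
\end{equation*}
so that $m^{3}\gtrsim M/(\epsilon^{2}D^{2}\log D)$, i.e.\ $m\gtrsim M^{1/3}/(\epsilon^{2/3}D^{2/3}(\log D)^{1/3})$, as required.

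The one step that needs a sanity check (and which I expect to be the only genuine obstacle) is verifying that the chosen $\bar\gamma = c/(Dm)^{2}$ actually lies in the admissible range $(0,\epsilon^{2}]$ required by \Cref{lem:SQHardDist}; this is where the hypothesis $M\gtrsim \log D/(\epsilon D)$ enters. Concretely, one checks that at the critical $m$ determined above one has $\bar\gamma \lesssim \epsilon^{2}$ precisely when $M\gtrsim \log D/(\epsilon D)$ (equivalently, when the SDA lower bound, rather than the trivial one, governs). Once this book-keeping is done the argument is complete, and the same optimization could be recorded as a general reduction from SDA lower bounds to communication lower bounds in the blackboard model.
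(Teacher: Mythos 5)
Your proof of part~1 is correct and identical to the paper's. For part~2 you chain exactly the same two lemmas (\Cref{lem:sqAdaptiveLow} applied to the SDA bound from \Cref{lem:SQHardDist}) and carry out the same optimization over $\bar\gamma$, so this is essentially the paper's argument. The one real difference is where the free parameter is fixed: the paper chooses $\bar\gamma = \left(\epsilon^2\log D/(DM)\right)^{2/3}$ a priori, so that the two branches $1/(D\sqrt{\bar\gamma})$ and $M\bar\gamma/(\epsilon^2\log D)$ coincide, and then the admissibility constraint $\bar\gamma\leq\epsilon^2$ reduces cleanly (and once and for all) to the hypothesis $M\gtrsim\log D/(\epsilon D)$. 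You instead set $\bar\gamma\asymp 1/(Dm)^2$, which depends on the unknown number of machines $m$ of the protocol under consideration. That introduces a circularity your closing paragraph does not quite resolve: you need $\bar\gamma\leq\epsilon^2$ for the $m$ actually used by the adversarial protocol, not merely at the critical $m$ you derive. For $m\lesssim 1/(D\epsilon)$ your chosen $\bar\gamma$ exceeds $\epsilon^2$ and the step is vacuous. To patch it you would add a short case: if $m\lesssim 1/(D\epsilon)$, instead take $\bar\gamma=\epsilon^2$; then \Cref{lem:sqAdaptiveLow} forces either $m\gtrsim 1/(D\epsilon)$ (contradiction after tuning constants) or $m\gtrsim M/\log D$, and under $M\gtrsim\log D/(\epsilon D)$ one has $M/\log D\gtrsim M^{1/3}/(\epsilon^{2/3}D^{2/3}(\log D)^{1/3})$, so the target bound holds anyway. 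This is harmless but adds friction; the paper's fixed-$\bar\gamma$ choice avoids it entirely and is the cleaner bookkeeping.
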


\begin{proof}
The bound on $\nstar(\cP_M \cup P)$ follows from \Cref{fact:testing} and the fact that the distributions are separated in total variation distance.

We now turn our attention to the lower bound. Fix any $\bar \gamma$ such that $\bar \gamma \leq \epsilon^2$. 
\Cref{lem:SQHardDist} implies that the SDA of this decision problem, denoted by $d$, is at least $\frac{M \bar{\gamma}}{\epsilon^2}$.
Thus, \cref{lem:sqAdaptiveLow} states that $m \gtrsim \min \left\{\frac{1}{D \sqrt{\bar{\gamma}  }} , \frac{d}{\log D} \right\} \gtrsim \min \left\{\frac{1}{D \sqrt{\bar{\gamma}  }} , \frac{M\bar{\gamma}}{\epsilon^2\log D} \right\}$.
Taking $\bar{\gamma} =  \left( \frac{\epsilon^2 \log D}{D M} \right)^{2/3}  $, which satisfies ${\bar \gamma} \leq \epsilon^2$,
we have $m \gtrsim \frac{M^{1/3}}{\epsilon^{2/3} D^{2/3} (\log D)^{1/3}}$.
This completes the proof of \cref{cor:SQMaryHypTesting} and the second claim in \cref{thm:Adaptive-lb}.
\end{proof}

\begin{remark}\label{rem:AdapLowBounds} Note that \Cref{lem:SQHardDist} also implies a lower bound of $\Omega\left(\frac{\sqrt{M}}{\epsilon D}\right)$ for the special case of sequentially-adaptive algorithms, by using the lower bound for the $\MSAMPLE(D)$ oracle in \Cref{thm:SqSTATVSTAT}.
\end{remark}

\subsection{Lower bounds from impossibility of $\ell_1$-embedding} %
\label{app:lower_bounds_from_impossibility_of_l1}

The main result of this section is \cref{thm:lowerbound-l1-impossibility}.
Before that, we first state the following technical lemma, adapted from Lee, Mendel, and Naor~\cite[Lemma 3.1]{LeeMN05}
   (also see Charikar and Sahai~\cite{ChaSah02}):
\begin{lemma} 
\label{lem:L1Embedding}
Let $r \in \N$. For any $M = 2^{r}$ and $\epsilon \in (0, 1)$, there exists a set of distributions $\cP = \{P, P_1,\dots, P_M\} \subseteq \Delta_M$ such that for any $D \in \N$ and $\bT \in \cT_D$, we have
\begin{align*}
\frac{1}{M}\sum_{i=1}^M \dtv(\bT P_i,\bT P) \leq \frac{\epsilon \sqrt{D}}{\sqrt{M}},
\end{align*}
and for any distinct $p,q \in \cP$, we have $\dtv(p,q) \gtrsim \epsilon$.
\end{lemma}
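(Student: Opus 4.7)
The plan is to construct $\cP$ directly from the Walsh-Hadamard matrix, reusing (with a small twist) the hard instance from \cref{lem:SQHardDist}. Let $W \in \{-1,+1\}^{M \times M}$ denote the Walsh-Hadamard matrix with rows $w_1,\ldots,w_M$, where $w_1$ is the all-ones row and every other $w_i$ has exactly $M/2$ entries equal to $+1$. Set $P$ to be uniform on $[M]$ and, for each $i$ with a zero-sum Hadamard row, define $P_i \in \Delta_M$ coordinatewise by $P_i(j) = (1 + \epsilon w_{i,j})/M$; this is a valid distribution precisely because $\sum_j w_{i,j}=0$. (To produce exactly $M$ non-trivial distributions instead of $M-1$, one can instead use a $2M \times 2M$ Walsh-Hadamard matrix and restrict to $M$ zero-sum rows; this only costs constants.) Pairwise separation is immediate: $\|P_i - P\|_1 = \epsilon \|w_i\|_1 / M = \epsilon$, and for $i\neq j$ the orthogonality $\langle w_i, w_j\rangle = 0$ forces the two $\pm 1$ vectors to disagree on exactly $M/2$ coordinates, so $\|P_i - P_j\|_1 = \epsilon \|w_i - w_j\|_1/M = \epsilon$. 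Hence $\dtv(p,q) \gtrsim \epsilon$ for all distinct $p, q \in \cP$.

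The heart of the argument is a two-step Cauchy-Schwarz bound that exploits both the orthogonality of the Hadamard rows and the column-stochastic structure of $\bT$. Fix any $\bT \in \cT_D$, viewed as a $D \times M$ matrix with rows $t_1,\ldots,t_D \in [0,1]^M$ whose columns sum to $1$. Since $\dtv(\bT P_i, \bT P) = \frac{\epsilon}{2M}\|\bT w_i\|_1$, it suffices to upper bound
\begin{equation*}
\sum_{i=1}^M \|\bT w_i\|_1 \;=\; \sum_{d=1}^D \sum_{i=1}^M |\langle t_d, w_i\rangle|.
\end{equation*}
Applying Cauchy-Schwarz in $i$ and using that $\{w_i/\sqrt{M}\}$ is an orthonormal basis of $\R^M$, we get $\sum_i |\langle t_d, w_i\rangle| \leq \sqrt{M}\bigl(\sum_i \langle t_d, w_i\rangle^2\bigr)^{1/2} = M\|t_d\|_2$. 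A second Cauchy-Schwarz in $d$ gives $\sum_d \|t_d\|_2 \leq \sqrt{D}\bigl(\sum_d \|t_d\|_2^2\bigr)^{1/2}$. The critical observation is then that $\sum_d \|t_d\|_2^2 = \sum_{d,j} t_{dj}^2 \leq \sum_{d,j} t_{dj} = M$, where the inequality uses $t_{dj} \in [0,1]$ and the equality uses that each column of $\bT$ sums to $1$. Chaining these three estimates, $\sum_i \|\bT w_i\|_1 \leq M \sqrt{DM}$, and dividing by $M$ yields $\frac{1}{M}\sum_i \dtv(\bT P_i, \bT P) \leq \frac{\epsilon \sqrt{D}}{2\sqrt{M}}$, as claimed.

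The main obstacle, and the step that gives the bound its characteristic $\sqrt{D}$ dependence rather than a trivial $D$, is the squared-entry estimate $\sum_d \|t_d\|_2^2 \leq M$: this is where the channel's stochasticity is used to control the $\ell_2$-mass of its rows in terms of their $\ell_1$-mass. Without this slack one would only obtain a vacuous bound via $\|\bT w_i\|_1 \leq \|w_i\|_1 = M$. A secondary (and purely cosmetic) technicality is the validity of $P_1$ when $w_1$ is the all-ones row; as noted above, either dropping this row or embedding into a $2M$-sized Hadamard matrix handles this at the cost of constants, and does not affect either the separation or the averaged total-variation bound.
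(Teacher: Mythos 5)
Your proof is correct and takes essentially the same approach as the paper: both build $\cP$ from the Walsh--Hadamard matrix and combine Parseval (row orthogonality), the column-stochasticity bound $\sum_{d,j} T_{dj}^2 \le M$, and Cauchy--Schwarz to pass from $\ell_2$ to the averaged $\ell_1$ quantity. You merely reorder the Cauchy--Schwarz applications (first over $i$ within each channel row, then over $d$) relative to the paper's single Jensen step after establishing $\sum_i \|\bT(P_i - P)\|_2^2 \le \epsilon^2$, which is a cosmetic rearrangement yielding the same bound; you also correctly flag that the all-ones Hadamard row produces an invalid $P_1$, a technicality the paper's cited construction glosses over, and your fix (drop that row or embed in a $2M$-sized Hadamard) is the standard remedy.
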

\begin{proof}
Let $\{P,P_1,\dots,P_M\}$ be the distributions from \Cref{lem:SQHardDist}.
We follow the proof strategy in Lee, Mendel, and Naor~\cite[Lemma 3.1]{LeeMN05}. 
We begin by writing
\begin{align*}
 \sum_{i=1}^M \|\bT(P_i - P)\|_2^2 &= \epsilon^2 \sum_{i=1}^M \left\|\bT \frac{v_i}{M}\right\|_2^2  &&{\text{(by definition of the $P_i$'s)}}\\
 &= \frac{\epsilon^2}{M^2}\sum_{i=1}^M \left\| \sum_{j=1}^M v_{i,j} \bT e_j   \right\|_2^2 \\
 & = \frac{\epsilon^2}{M^2}\sum_{i=1}^M\sum_{j=1}^M\sum_{l=1}^M \left\langle  v_{i,j} \bT e_j, v_{i,l} \bT e_l\right\rangle  \\
 &= \frac{\epsilon^2}{M^2}\sum_{j=1}^M\sum_{l=1}^M \left\langle   \bT e_j,  \bT e_l\right\rangle \left( \sum_{i=1}^M v_{i,j} v_{i,l} \right) \\
 &= \frac{\epsilon^2}{M^2}\sum_{j=1}^M\sum_{l=1}^M \left\langle   \bT e_j,  \bT e_l\right\rangle  \left\langle v_j,v_l \right\rangle && \text{(using symmetry of $V$)}\\
 &= \frac{\epsilon^2}{M^2}\sum_{j=1}^M \|   \bT e_j\|_2^2 \|v_j\|^2_2 \\
 & = \frac{\epsilon^2}{M}\sum_{j=1}^M \|   \bT e_j\|_2^2 && \text{(using $V^\top V= kI$ and $k=M$)}\\
 &\leq \frac{\epsilon^2}{M}\sum_{j=1}^M \|   \bT e_j\|_1^2  && \text{(using $\|x\|_2 \leq \|x\|_1$)}\\
 &= \epsilon^2,
 \end{align*}
 where the last equality uses the fact that $\bT e_j \in \Delta_{D}$.
Applying Cauchy-Schwarz and using the fact that $\|x\|_1 \leq \sqrt{D} \|x\|_2$ for $x \in \Delta_D$, we obtain
\begin{align*}
\frac{1}{M} \sum_{i=1}^M \dtv(\bT P_i, \bT P) &\leq \sqrt{\frac{1}{M} \sum_{i=1}^M (\dtv(\bT P_i, \bT P))^2}\\
&\leq \sqrt{\frac{1}{M} \sum_{i=1}^M \|\bT(P_i - P)\|_1^2} \\
&\leq \sqrt{\frac{D}{M} \sum_{i=1}^M \|\bT(P_i - P)\|_2^2} \\
&\leq  \frac{\epsilon \sqrt{D}}{\sqrt{M}}.
\end{align*}
\end{proof}

We are now ready to prove the $\Omega(\sqrt{M})$ lower bound for non-adaptive, non-identical channels:
\begin{theorem}
\label{thm:lowerbound-l1-impossibility}
There exists a set $\cP = \{P,P_1,\dots,P_M\} \subseteq \Delta_M$ such that the following hold:
\begin{enumerate}
	\item $\nstar(\cP) \lesssim \frac{\log M}{\epsilon^2}$, and
	\item $\nnid(\cP,\cT_D) \gtrsim \frac{\sqrt{M/D}}{\epsilon}$.
\end{enumerate}
\end{theorem}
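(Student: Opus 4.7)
The set $\cP=\{P,P_1,\ldots,P_M\}$ from \Cref{lem:L1Embedding} is pairwise $\Omega(\epsilon)$-separated in total variation, so the upper bound on $\nstar(\cP)$ is immediate: by \Cref{fact:div}, $\hel(p,q)\geq \dtv(p,q) \gtrsim \epsilon$ for any pair $p\neq q$ in $\cP$, and the third item of \Cref{fact:testing} then yields $\nstar(\cP)\lesssim \log(M+1)/\epsilon^2 \lesssim \log(M)/\epsilon^2$. This disposes of the first claim.

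For the second claim, the plan is to reduce the $(M+1)$-ary testing problem to distinguishing $P$ from the uniform mixture $\bar P := \frac{1}{M}\sum_{j=1}^M P_j$, and then use convexity and tensorization to upper bound the relevant total variation by $n\cdot \epsilon\sqrt{D/M}$ via \Cref{lem:L1Embedding}. Concretely, fix any $n$ and any non-adaptive choice of channels $\bT_1,\ldots,\bT_n\in\cT_D$ together with a test $\phi$ that satisfies the error bound~\eqref{eq:DefErrorProbCommCons}. Let $Q_0=\prod_{i=1}^n \bT_i P$ and $Q_j=\prod_{i=1}^n \bT_i P_j$ for $j\in[M]$, and set $\bar Q=\frac{1}{M}\sum_j Q_j$. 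Define the binary test $\psi$ which outputs ``$P$'' whenever $\phi$ outputs $P$ and ``not $P$'' otherwise. The condition $\sum_{p\in\cP}\P_p(\phi\ne p)\leq 0.1$ gives $\P_P(\psi=P)\geq 0.9$ and simultaneously $\frac{1}{M}\sum_j \P_{P_j}(\psi=P)\leq \frac{1}{M}\sum_j \P_{P_j}(\phi\ne P_j)\leq 0.1/M\leq 0.1$, so
\[
\dtv(Q_0,\bar Q)\ \geq\ \P_{Q_0}(\psi=P)\ -\ \P_{\bar Q}(\psi=P)\ \geq\ 0.8.
\]

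Now I would upper-bound $\dtv(Q_0,\bar Q)$ by a quantity that is linear in $n$. Convexity of total variation gives $\dtv(Q_0,\bar Q)\leq \frac{1}{M}\sum_{j=1}^M \dtv(Q_0,Q_j)$, and sub-additivity of total variation over product distributions (second item of \Cref{fact:div}) yields $\dtv(Q_0,Q_j)\leq \sum_{i=1}^n \dtv(\bT_i P,\bT_i P_j)$. Swapping the order of summation and applying \Cref{lem:L1Embedding} to each channel $\bT_i$ gives
\[
\dtv(Q_0,\bar Q)\ \leq\ \sum_{i=1}^n \frac{1}{M}\sum_{j=1}^M \dtv(\bT_i P,\bT_i P_j)\ \leq\ n\cdot\frac{\epsilon\sqrt{D}}{\sqrt{M}}.
\]
Combining with $\dtv(Q_0,\bar Q)\geq 0.8$ gives $n\gtrsim \sqrt{M/D}/\epsilon$, as desired.

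The only nontrivial point is the first reduction step: one must translate the worst-case per-hypothesis error condition~\eqref{eq:DefErrorProbCommCons} into a uniform prior (Bayesian) distinguishing statement, which I expect to be the main place where one has to track constants carefully; the rest is just convexity and tensorization of total variation combined with the $\ell_1$-embedding bound of \Cref{lem:L1Embedding}. Note also that the argument relies on the channels being non-adaptive so that $Q_j$ is a product measure and the sub-additivity of total variation can be invoked directly; the adaptive case requires the stronger tools already developed in \Cref{app:strong_data_processing,app:sq-lower-bound}.
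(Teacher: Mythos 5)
Your proof is correct and follows essentially the same approach as the paper: both arguments hinge on the identical chain of total-variation inequalities (restrict attention to pairs $(P_i,P)$, average over $i$, invoke sub-additivity of total variation over product measures, exchange the order of summation, and apply \Cref{lem:L1Embedding} channel-by-channel), yielding the bound $n\,\epsilon\sqrt{D/M}$. The only difference is cosmetic packaging of the final step: the paper notes that the minimum pairwise total variation of the transformed product distributions is at most $N\epsilon\sqrt{D/M}$, so some pair $(P,P_{i^*})$ is indistinguishable, and concludes via \Cref{fact:testing}; you instead introduce the uniform mixture $\bar Q$ and show $\dtv(Q_0,\bar Q)\ge 0.8$ directly from the error criterion~\eqref{eq:DefErrorProbCommCons} before deriving the same upper bound on that total variation. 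Your mixture formulation is a clean way to cash out the worst-case-to-average reduction and avoids invoking \Cref{fact:testing} a second time, but the mathematical content is the same.
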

\begin{proof}
We will assume that $M = 2^r$ for some $r \in \N$.
Let $\cP = \{P,P_1,\dots,P_M\}$ be the set of distributions from \Cref{lem:L1Embedding}.
The upper bound on $\nstar(\cP)$ follows by the lower bound on the pairwise total variation distance and \Cref{fact:testing}.  We now turn our attention to the lower bound.

Fix any arbitrary choice of different $\{\bT_1,\dots,\bT_N\}$, where $\bT_i$ is the channel used by the $i^{\text{th}}$ user.
We use the following series of inequalities to upper-bound the minimum separation in total variation distance, for any choice of $\bT_1,\dots,\bT_N$:
\begin{align*}
\max_{\bT_l : l \in [N]} \min_{p\neq q  \in \cP} &\dtv\left(  \prod_{l=1}^N \bT_l p, \prod_{l=1}^N \bT_l q \right) \\
&\leq
 \max_{\bT_l : l \in [N]} \min_{i \in [M]} \dtv\left(  \prod_{l=1}^N \bT_l P_i, \prod_{l=1}^N \bT_l P\right) \\
&\leq \max_{\bT_l : l \in [N]} \sum_{i=1}^M \frac{1}{M} \dtv\left(  \prod_{l=1}^N \bT_l P_i, \prod_{l=1}^N \bT_l P\right)&& \text{(minimum is less than the average)}\\
&\leq \max_{\bT_l : l \in [N]} \sum_{i=1}^M \frac{1}{M} \sum_{r=1}^N \dtv\left(   \bT_l P_i,  \bT_l P\right)&& \text{(subadditivity of $\dtv$ (\Cref{fact:div}))}\\
&= \max_{\bT_l : l \in [N]}\sum_{r=1}^N  \sum_{i=1}^M \frac{1}{M}  \dtv\left(   \bT_l P_i,  \bT_l P\right) && \text{(exchanging the sum)}\\
&= N \max_{\bT \in \cT_D }  \sum_{i=2}^M \frac{1}{M}  \dtv\left(   \bT P_i,  \bT P\right) && \text{($N$ independent optimization problems)}\\
&\leq \frac{N \epsilon \sqrt{D}}{\sqrt{M}} && \text{(using \cref{lem:L1Embedding}).}
\end{align*}
Thus, if $N \lesssim \frac{1}{\epsilon}\sqrt{\frac{M}{D}}$, for any choice of $N$ channels, there exist $P', P \in \cP$ such that the total variation distance between the resulting product distributions is at most $0.001$. Consequently, \Cref{fact:testing} implies that there exists no test with probability of success in distinguishing between $P$ and $P'$ more than $0.95$ (say). Hence, one must have $N \gtrsim \frac{1}{\epsilon}\sqrt{\frac{M}{D}}$ for any successful test. Since this holds for an arbitrary choice of channels, we have the desired lower bound on $\nnid(\cP,\cT_D)$.
\end{proof}

\section{Auxiliary details} %
\label{app:technical_details}

We first mention that the class of well-behaved $f$-divergences includes various well-known $f$-divergences:
\begin{claim}[Examples of well-behaved $f$-divergences]
\label{claim:wellBehExamples}
 The following are examples of well-behaved $f$-divergences (cf.\ \Cref{def:well-behaved}):
\begin{enumerate}
  \item(Hellinger distance) $f(x) = (\sqrt{x} - 1)^2$ with $\kappa = 1$, $C_1 = 2^{-3.5}$, $C_2 = 1 $, and $\alpha = 2$. 
  \item(Total variation distance) $f(x) = 0.5 |x - 1|$ with $\kappa> 0$, $C_1 = 0.5$, $C_2 = 0.5$, and $\alpha = 1$.
  \item(Symmetrized KL-divergence) $f(x) = x \log x - \log x$ with $\kappa = 1$, $C_1 = 0.5$, $C_2 = 1$, and $\alpha= 2$. 
  \item(Triangular discrimination) $f(x) = \frac{(x-1)^2}{1+x}$ with $\kappa = 1$, $C_1 = 1/3$, $C_2 = 1/2$, and $\alpha = 2$.
  \item(Symmetrized $\chi^s$-divergence) For $s \geq 1$, $f(x) = |x-1|^s + x^{1-s}|x-1|^s$ with $\kappa = 1$, $C_1 = 1$, $C_2 = 3$, and $\alpha = s$.\footnote{The usual $\chi^s$-divergence corresponds to $f(x) = |x-1|^s$, for $s \geq 1$~\cite{Sason18}. We consider the symmeterized version with $\tilde{f}(x) = f(x) + xf(1/x)$.} 
\end{enumerate}
\end{claim}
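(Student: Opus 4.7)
The plan is to verify, for each of the five listed functions, the three defining conditions of a well-behaved $f$-divergence: (I.1) nonnegativity, convexity, and $f(1)=0$; (I.2) the symmetry $xf(y/x)=yf(x/y)$; and (I.3) the two-sided polynomial bound $C_1 x^\alpha \leq f(1+x)\leq C_2 x^\alpha$ on $[0,\kappa]$. Conditions (I.1) and (I.2) are routine in every case: $f(1)=0$ and nonnegativity are immediate from the formula; convexity follows either from a direct second-derivative computation or from the footnote in \Cref{def:well-behaved} (any $\tilde f(x):=f(x)+xf(1/x)$ inherits convexity from a convex $f$), which covers the symmetrized KL, triangular discrimination, and symmetrized $\chi^s$ cases; and symmetry follows from a one-line manipulation. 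For instance, for the triangular discrimination
\[
 xf(y/x)=x\cdot\frac{(y/x-1)^2}{1+y/x}=\frac{(y-x)^2}{x+y},
\]
which is patently symmetric in $(x,y)$, and the analogous calculations for the Hellinger, TV, symmetrized KL, and symmetrized $\chi^s$ cases each reduce $xf(y/x)$ to an expression manifestly invariant under swapping $x\leftrightarrow y$.

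The substantive content is condition (I.3), and I would dispose of it case by case. For the squared Hellinger divergence, write
\[
 f(1+x)=(\sqrt{1+x}-1)^2=\frac{x^2}{(\sqrt{1+x}+1)^2},
\]
and observe that for $x\in[0,1]$ the denominator lies in $[4,(1+\sqrt 2)^2]\subseteq [4,2^{3.5}]$, yielding $C_1=2^{-3.5}$ and $C_2=1$ with $\alpha=2$. For total variation, $f(1+x)=x/2$ so $C_1=C_2=1/2$ and $\alpha=1$ trivially. For symmetrized KL, $f(x)=(x-1)\log x$ gives $f(1+x)=x\log(1+x)$, and combining $\log(1+x)\leq x$ with $\log(1+x)\geq x/(1+x)\geq x/2$ on $[0,1]$ yields $f(1+x)\in[x^2/2,x^2]$. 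For triangular discrimination, $f(1+x)=x^2/(2+x)$ and the denominator lies in $[2,3]$ on $[0,1]$, so $f(1+x)\in[x^2/3,x^2/2]$. For symmetrized $\chi^s$ with $s\geq 1$,
\[
 f(1+x)=x^s\bigl[1+(1+x)^{1-s}\bigr],
\]
and on $[0,1]$ the bracket lies in $[1,1+2^{1-s}]\subseteq[1,3]$, giving $C_1=1$ and $C_2=3$ with $\alpha=s$.

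I do not anticipate a main obstacle in this claim: each case is a self-contained calculation, and the only place that requires even a brief justification beyond inspection is convexity of the triangular discrimination (where a direct computation gives $f''(x)=8/(1+x)^3>0$) and of the symmetrized $\chi^s$ for general $s\geq 1$ (which reduces via the footnote-construction to convexity of $x\mapsto |x-1|^s$). I would present the proof compactly as five short paragraphs, one per divergence, each carrying out the four verifications (sign/convexity/$f(1)=0$, symmetry, and the two polynomial bounds) in a uniform format, with the Hellinger case worked out in slightly more detail since its constants are the ones actually invoked in \Cref{cor:hellQuant}.
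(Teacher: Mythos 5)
Your proof is correct and follows the same case-by-case strategy as the paper's proof of \Cref{claim:wellBehExamples}; the only genuinely different spot is the Hellinger case, where you bound $f(1+x) = x^2/(\sqrt{1+x}+1)^2$ by squeezing the denominator into $[4,(1+\sqrt 2)^2]\subseteq[2,2^{3.5}]$, whereas the paper instead shows $g(x) := f(1+x) - C_1 x^2$ satisfies $g(0)=g'(0)=0$ and $g''\geq 0$ on $[0,1]$. Your algebraic-conjugate route is cleaner and yields the sharper $C_2=1/4$ for free, though you (correctly) only claim the weaker $C_2=1$ that the statement requires. (You also silently correct a small typo in the paper's TV case, which writes $f(1+x)=x$ rather than $x/2$.)

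One minor slip to fix: in the symmetrized $\chi^s$ case, the bracket $1+(1+x)^{1-s}$ is \emph{decreasing} in $x$ on $[0,1]$ for $s\geq 1$ (since the exponent $1-s\leq 0$), so its range is $[1+2^{1-s},\,2]$, not $[1,\,1+2^{1-s}]$ as you wrote. The two intervals intersect only at the endpoint, so your stated range is wrong; fortunately the correct range is still contained in $[1,3]$, so the constants $C_1=1$, $C_2=3$ (and indeed the tighter $C_1 = 1+2^{1-s}$, $C_2=2$) remain valid and the conclusion stands.
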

\begin{proof}
It is easy to see that these functions are non-negative, convex, and satisfy the symmetry property of \cref{def:well-behaved}.
In the remainder of the proof, we outline how they satisfy the property \ref{item:quad}.
\begin{enumerate}
  \item
We will show that we can take $\kappa = 1$, $C_1 = 2^{-3.5}$, $C_2 = 1$, and $\alpha= 2$.
 The upper bound $f(1+x) = (\sqrt{1 +x } - 1)^2 \leq x^2$
follows by noting that $\sqrt{1 + x} \leq 1 + x$ for any $x \geq 0$. For the lower bound, we define $g(x) := f(1+x) - C_1 x^2$. Note that $g(0) = 0$, $g'(x) = 1 - (1 + x)^{-0.5} - 2C_1x$, and $g''(x) = 0.5 (1+x)^{-1.5} -2C_1$.
We note that $g'(0) = 0$ and $g''(x) \geq g''(1) = 2^{-2.5} - 2C_1$ for all $x \in [0,1]$. Thus, $g''(x) \geq 0 $ for $x \in [0,1]$, so $g(x)$ is also nonnegative on $x \in [0,1]$.
\item The result follows by noting that for $x \geq 0$, we have $f(1+x) = x$.
\item We have $f(1+x) = x \log (1+ x)$. 
We use the fact that $\frac{x}{1 +x} \leq \log(1 + x) \leq x$ for $x \geq 0$.
This directly gives us $f(1 + x) = x \log(1 + x) \leq x^2$.
The lower bound follows by noting that $\log(1 + x) \geq \frac{x}{2}$ for $x \in [0,1]$, so $f(1+x) \geq x^2/2$.
\item We have $f(1+x) = x^2/(2+x)$, which lies between $x^2/3$ and $x^2/2$, for $x \in [0,1]$.
\item We have $f(1+x) = |x|^s(1 + (1+x)^{1-s})$, which is larger than $x^s$ and less than $3x^s$ for $x \in [0,1]$.
 \end{enumerate} 
\end{proof}
Finally, we mention the following approximation for the Hellinger distance between two Bernoulli distributions that was used earlier:
\begin{claim}[Approximation for Hellinger distance]
\label{prop:simpleHellinger}
For $0 \leq p \leq p' \leq 1/2$, we have the following:
\begin{align*}
\sqrt{p'}-\sqrt{p} \leq \sqrt{ \left(\sqrt{p} - \sqrt{p'}\right)^2 + \left(\sqrt{1-p} - \sqrt{1- p'}\right)^2} \leq \sqrt{2}\left(\sqrt{p'}-\sqrt{p}\right).
\end{align*}
\end{claim}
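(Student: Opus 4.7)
\medskip

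\noindent\textbf{Proof proposal.} The lower bound is immediate: since $(\sqrt{1-p}-\sqrt{1-p'})^2 \geq 0$, the quantity under the square root is at least $(\sqrt{p'}-\sqrt{p})^2$, and taking square roots (noting $\sqrt{p'} \geq \sqrt{p}$) gives the left inequality. So the main content is the upper bound, which amounts to showing
\[
(\sqrt{1-p}-\sqrt{1-p'})^2 \;\leq\; (\sqrt{p'}-\sqrt{p})^2.
\]

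To prove this, I would rationalize both differences:
\[
\sqrt{p'}-\sqrt{p} \;=\; \frac{p'-p}{\sqrt{p'}+\sqrt{p}}, \qquad \sqrt{1-p}-\sqrt{1-p'} \;=\; \frac{p'-p}{\sqrt{1-p}+\sqrt{1-p'}}.
\]
Since $p,p' \in [0,1/2]$, we have $p \leq 1-p$ and $p' \leq 1-p'$, hence $\sqrt{p}+\sqrt{p'} \leq \sqrt{1-p}+\sqrt{1-p'}$. Combined with $p'-p \geq 0$, this yields
\[
0 \;\leq\; \sqrt{1-p}-\sqrt{1-p'} \;\leq\; \sqrt{p'}-\sqrt{p},
\]
so squaring preserves the inequality and the desired bound on the sum of squares follows. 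Plugging in then gives
\[
(\sqrt{p}-\sqrt{p'})^2 + (\sqrt{1-p}-\sqrt{1-p'})^2 \;\leq\; 2(\sqrt{p'}-\sqrt{p})^2,
\]
and taking square roots finishes the upper bound.

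There is no real obstacle here; the only subtle point is the monotonicity step that requires the hypothesis $p' \leq 1/2$ (otherwise $\sqrt{1-p'} < \sqrt{p'}$ can occur and the inequality reverses). I would state that step explicitly so that the role of the $1/2$ cutoff is clear.
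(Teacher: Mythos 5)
Your proof is correct and mirrors the paper's: both reduce the upper bound to showing $\sqrt{1-p}-\sqrt{1-p'} \leq \sqrt{p'}-\sqrt{p}$, which immediately gives the factor of $\sqrt{2}$. The only difference is in how that sub-inequality is established: you rationalize both differences and compare denominators, whereas the paper shows $z \mapsto \sqrt{z}+\sqrt{1-z}$ is nondecreasing on $[0,1/2]$ via its derivative; your route is slightly more elementary (no calculus) and equally clean.
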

\begin{proof}
The first inequality follows by the non-negativity of the term $\left(\sqrt{1-p} - \sqrt{1- p'}\right)^2$.
To prove the second inequality, it suffices to show that $\left(\sqrt{1-p} - \sqrt{1- p'}\right)^2 \leq \left(\sqrt{p'} - \sqrt{p}\right)^2$, which is equivalent to showing that $\sqrt{1-p} - \sqrt{1-p'} \leq \sqrt{p'} - \sqrt{p}$.
For $z \in [0,0.5]$, define $f(z) := \sqrt{z} + \sqrt{1 -z}$.
The desired inequality is then equivalent to showing that $f(p) \leq f(p')$, which follows if $ f'(z) \geq  0$ for $z \in [0,0.5]$. Calculating the derivative, we obtain
\begin{align*}
f'(z) = \frac{1}{2 \sqrt{z}} - \frac{1}{2 \sqrt{1 -z} } = \frac{1}{2} \frac{\sqrt{1 - z} - \sqrt{z}} {\sqrt{z(1-z)}} \geq 0,
\end{align*}
since $z \in [0,0.5]$.
\end{proof}

\end{document}